\DeclareSymbolFont{bbold}{U}{bbold}{m}{n}
\DeclareSymbolFontAlphabet{\mathbbm}{bbold}
\title{Tameness for set theory $II$}
\author{Matteo Viale}
\thanks{The author acknowledge support from INDAM through GNSAGA
and from the project:
\emph{PRIN 2017-2017NWTM8R
Mathematical Logic: models, sets, computability.
\textbf{MSC:} \emph{03E35 03E57 03C25}.}
}
\theoremstyle{plain}
	\newtheorem{Theorem}{Theorem}
	\newtheorem{Notation}{Notation}
	\newtheorem{theorem}{Theorem}[section]
	\newtheorem{lemma}[theorem]{Lemma}
	\newtheorem{corollary}[theorem]{Corollary}
	\newtheorem{fact}[theorem]{Fact}
	\newtheorem{question}[theorem]{Question}
	\newtheorem{claim}{Claim}
	\newtheorem{subclaim}{Subclaim}
\theoremstyle{definition}
	\newtheorem{Definition}[Theorem]{Definition}
	\newtheorem{definition}[theorem]{Definition}
	\newtheorem{notation}[theorem]{Notation}
	\newtheorem{example}[theorem]{Example}
\theoremstyle{remark}
	\newtheorem{remark}[theorem]{Remark}
\newcommand{\ZFC}{\ensuremath{\mathsf{ZFC}}}
\newcommand{\ZF}{\ensuremath{\mathsf{ZF}}}
\newcommand{\WFE}{\ensuremath{\mathsf{WFE}}}
\DeclareMathOperator{\crit}{crit}
\DeclareMathOperator{\Ult}{Ult}
\DeclareMathOperator{\Coll}{Coll}
\newcommand{\maxUB}{\ensuremath{
{\mathbf{MAX}(\mathsf{UB})}}}
\newcommand{\Pmax}{\ensuremath{\mathbb{P}_{\mathrm{max}}}}
\newcommand{\stUB}{\ensuremath{(*)\text{-}\mathsf{UB}}}
\newcommand{\stA}{\ensuremath{(*)\text{-}\mathcal{A}}}
\newcommand{\NS}{\ensuremath{\mathbf{NS}}} 
\newcommand{\tow}[1]{\mathcal{#1}}
\newcommand{\pow}[1]{\mathcal{P}\left(#1\right)}
\newcommand{\ap}[1]{\langle #1 \rangle}
\newcommand{\bp}[1]{\left\lbrace #1 \right\rbrace}
\newcommand{\Cod}{\ensuremath{\text{{\rm Cod}}}}
\newcommand{\ST}{\ensuremath{\text{{\sf ST}}}}
\newcommand{\UB}{\ensuremath{\text{{\sf UB}}}}
\newcommand{\lUB}{\ensuremath{\text{{\rm l-UB}}}}
\newcommand{\MM}{\ensuremath{\text{{\sf MM}}}} 
\newcommand{\SSP}{\ensuremath{\text{{\sf SSP}}}}
\begin{document}

\begin{abstract}
The paper is the second of two and shows that (assuming large cardinals) set theory is 
a tractable (and we dare to say tame) 
first order theory when formalized in a first order signature with natural predicate symbols for the basic 
definable concepts of second and third order arithmetic, and appealing
to the model-theoretic notions of model completeness and model companionship. 

Specifically we use the general framework linking 
generic absoluteness results to model companionship introduced in the first paper
to show that strong forms of Woodin's axiom $(*)$ entail that
any theory $T$ extending $\ZFC$ by suitable large cardinal axioms has a model companion $T^*$ with respect to certain signatures $\tau$ containing symbols for $\Delta_0$-relations and functions, constant symbols for $\omega$ and 
$\omega_1$, a predicate symbol for the nonstationary ideal on $\omega_1$, symbols for certain lightface definable universally Baire sets.

Moreover  $T^*$ is axiomatized by the $\Pi_2$-sentences $\psi$ for 
$\tau$
such that $T$
proves that 
\[
L(\UB)\models(\Pmax\Vdash\psi^{H_{\omega_2}}),
\]
where $L(\UB)$ denotes the smallest transitive model 
containing the universally Baire sets.

Key to our results is the recent breakthrough of Asper\`o 
and Schindler establishing that a strong form of
Woodin's axiom $(*)$ follows from $\MM^{++}$.

 \end{abstract}

\maketitle


Throughout this paper we assume the reader is familiar with the results and terminology of \cite{VIATAMSTI}.
We will give detailed references of where to find inside \cite{VIATAMSTI} the notations, theorems, and definitions we will use here.

Let us start rightaway stating the main results.

Let $\tau_\ST$ be a signature containing predicate symbols 
$R_\psi$ of arity $m$ for all bounded $\in$-formulae $\psi(x_1,\dots,x_m)$, function symbols
$f_\theta$ of arity $k$ for for all bounded
$\in$-formulae $\theta(y,x_1,\dots,x_k)$, constant symbols 
$\omega$ and $\emptyset$.
$\ZFC_\ST\supseteq\ZFC$ is the $\tau_\ST$-theory
obtained adding axioms which force in each of its $\tau_\ST$-models $\emptyset$ to be interpreted by the empty set,
$\omega$ to be interpreted by the first infinite ordinal,
each $R_\psi$ as the class of $k$-tuples defined by the bounded formula
$\psi(x_1,\dots,x_k)$, each $f_\theta$ as the $l$-ary class function whose graph
is the extension of the bounded formula $\theta(x_1,\dots,x_l,y)$
(whenever $\theta$ defines a functional relation),
see \cite[Notation 2]{VIATAMSTI} for details.

We supplement \cite[Notation 2]{VIATAMSTI} with another piece of notation that will be used throughout the paper.

\begin{Notation}\label{not:keynotation}
\emph{}

\begin{itemize}
\item
$\tau_{\NS_{\omega_1}}$ is the signature $\tau_\ST\cup\bp{\omega_1}\cup\bp{\NS_{\omega_1}}$ with $\omega_1$ 
a constant symbol, $\NS_{\omega_1}$ a unary predicate symbol.

%
\item
$T_{\NS_{\omega_1}}$ is the $\tau_{\NS_{\omega_1}}$-theory
given by $T_\ST$ together with the axioms
\[
\omega_1\text{ is the first uncountable cardinal},
\]
\[
\forall x\;[(x\subseteq\omega_1\text{ is non-stationary})\leftrightarrow\NS_{\omega_1}(x)].
\]

\item
$\ZFC^-_{\NS_{\omega_1}}$ is the $\tau_{\NS_{\omega_1}}$-theory 
\[
\ZFC^-_\ST+T_{\NS_{\omega_1}}.
\]
\item
Accordingly we define $\ZFC_{\NS_{\omega_1}}$.
\end{itemize}
\end{Notation}

Let $\UB$ denote the family of universally Baire sets (see for details \cite[Section 4.2]{VIATAMSTI}), and $L(\UB)$ denote the smallest transitive model of $\ZF$ which contains $\UB$.

\begin{Theorem}\label{Thm:mainthm-1bis}
Let $\mathcal{V}=(V,\in)$ be a model of 
\[
\ZFC+\maxUB+\emph{there is a supercompact cardinal and class many Woodin cardinals},
\]
and $\UB$ denote the family of universally Baire sets in 
$V$.

TFAE
\begin{enumerate}
\item\label{thm:char(*)-modcomp-1}
$(V,\in)$ models $\stUB$;
\item\label{thm:char(*)-modcomp-2}
$\NS_{\omega_1}$ is precipitous\footnote{See \cite[Section 1.6, pag. 41]{STATLARSON}  for a definition of precipitousness and a discussion of its properties. A key observation is that $\NS_{\omega_1}$ being precipitous is independent of $\mathsf{CH}$ (see for example \cite[Thm. 1.6.24]{STATLARSON}), while $\stUB$ entails $2^{\aleph_0}=\aleph_2$  (for example by the results of \cite[Section 6]{HSTLARSON}).

Another key point is that we stick to the formulation of $\Pmax$ as in \cite{HSTLARSON} so to 
be able in its proof to quote verbatim from \cite{HSTLARSON} all the relevant results on $\Pmax$-preconditions we will use.
It is however possible to  develop $\Pmax$ focusing on Woodin's countable tower rather than 
on the precipitousness of $\NS_{\omega_1}$ to develop the notion of $\Pmax$-precondition. Following this approach in
all its scopes, one should be able to reformulate Thm. \ref{Thm:mainthm-1bis}(\ref{thm:char(*)-modcomp-2}) 
omitting the request that
$\NS_{\omega_1}$ is precipitous. We do not explore this venue any further.} and
the $\tau_{\NS_{\omega_1}}\cup\UB$-theory of $V$ has as model companion the
$\tau_{\NS_{\omega_1}}\cup\UB$-theory of $H_{\omega_2}$.
\end{enumerate}
\end{Theorem}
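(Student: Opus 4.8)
The plan is to prove the equivalence $\stUB\Leftrightarrow$ (precipitousness $+$ model companionship) by routing both directions through the general machinery of the first paper, which links generic absoluteness to model companionship. The key is the abstract fact (established in \cite{VIATAMSTI}) that, for an appropriate signature, a theory $T$ has a model companion exactly when a suitable generic absoluteness principle holds, and that the model companion is then axiomatized by the $\Pi_2$-sentences $\psi$ for which $T$ proves $L(\UB)\models(\Pmax\Vdash\psi^{H_{\omega_2}})$. So the task reduces to identifying the syntactic/model-theoretic condition ``$\stUB$'' with the semantic condition that the $\tau_{\NS_{\omega_1}}\cup\UB$-theory of $H_{\omega_2}$ is the model companion of the full theory of $V$.

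\smallskip

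\noindent\textbf{From $\stUB$ to model companionship.} First I would assume $(V,\in)\models\stUB$. Since $\stUB$ is a strengthening of Woodin's axiom $(*)$ formulated for universally Baire sets, it should immediately give that $\NS_{\omega_1}$ is precipitous (this is a standard consequence of the $\Pmax$ framework, as $(*)$ asserts that $H_{\omega_2}$ is a $\Pmax$-extension of $L(\UB)$, and precipitousness holds in such extensions). The substantive part is to show $\mathrm{Th}_{\tau_{\NS_{\omega_1}}\cup\UB}(H_{\omega_2})$ is the model companion of $\mathrm{Th}_{\tau_{\NS_{\omega_1}}\cup\UB}(V)$. For this I would invoke the main transfer theorem of \cite{VIATAMSTI}: under $\maxUB$ together with the large cardinal hypotheses, the relevant generic absoluteness (the $\Pmax$-forcing over $L(\UB)$ computing the $\Pi_2$-theory of $H_{\omega_2}$) is equivalent to model companionship. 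The point is that $\stUB$ is precisely the statement that $H_{\omega_2}$, as a $\tau_{\NS_{\omega_1}}\cup\UB$-structure, realizes the ``maximal'' $\Pi_2$-theory prescribed by the $\Pmax$-conditions over $L(\UB)$; this existential closure of $H_{\omega_2}$ inside models of the ambient theory is exactly what model companionship demands.

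\smallskip

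\noindent\textbf{From model companionship to $\stUB$.} Conversely, assume $\NS_{\omega_1}$ is precipitous and that $\mathrm{Th}_{\tau_{\NS_{\omega_1}}\cup\UB}(H_{\omega_2})$ is the model companion of $\mathrm{Th}_{\tau_{\NS_{\omega_1}}\cup\UB}(V)$. Precipitousness guarantees that $\Pmax$-preconditions behave as in \cite{HSTLARSON}, so the $\Pmax$ analysis over $L(\UB)$ is available. Model companionship forces $H_{\omega_2}$ to be existentially closed in its ambient $\tau_{\NS_{\omega_1}}\cup\UB$-models, and by the characterization from the first paper this existential closure is equivalent to the generic absoluteness for $\Pi_2$-sentences that defines $\stUB$. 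Concretely, every $\Pi_2$-sentence $\psi$ forced by $\Pmax$ over $L(\UB)$ to hold in $H_{\omega_2}$ must already hold, and this is the content of $\stUB$.

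\smallskip

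\noindent\textbf{Main obstacle.} The hardest step is pinning down that the model-theoretic existential closure of $H_{\omega_2}$ matches the $\Pmax$-generic absoluteness \emph{exactly}, with the nonstationary predicate $\NS_{\omega_1}$ and the universally Baire predicates correctly interpreted on both sides. The delicate point is that $\NS_{\omega_1}$ is a $\Pi_1$-definable-but-not-$\Delta_0$ predicate, so its behavior under passing to existentially closed models and under $\Pmax$-forcing must be controlled carefully; this is exactly why precipitousness is assumed in (\ref{thm:char(*)-modcomp-2}), as flagged in the footnote. Verifying that $\Pmax$ correctly computes $\NS_{\omega_1}^{H_{\omega_2}}$ from the $\Pmax$-preconditions in $L(\UB)$, and that this is preserved under the model-companion axiomatization, is where the Asper\`o--Schindler input and the verbatim quotations from \cite{HSTLARSON} become essential.
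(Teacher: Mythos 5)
Your proposal does not contain the actual argument: both directions are delegated to an ``abstract transfer theorem'' from the first paper that does not exist in the form you invoke. The framework of \cite{VIATAMSTI} gives you Robinson's test and the characterization of model companionship via shared $\Pi_1$-fragments, but it does not tell you that the $\sigma_{\UB,\NS_{\omega_1}}$-theory of $H_{\omega_2}$ is model complete --- that is the mathematical heart of the matter and it is exactly what you skip. The paper's route for (1)$\Rightarrow$(2) is: under $\stUB$ one has $H_{\omega_2}^V=H_{\omega_2}^{L(\UB)[G]}$ for an $L(\UB)$-generic $G$; then one must exhibit, for every existential $\sigma_{\UB,\NS_{\omega_1}}$-formula, an equivalent universal one over $H_{\omega_2}^{L(\UB)[G]}$ (Thm.~\ref{thm:keythmmodcompanHomega2}). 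Doing this requires constructing the universally Baire set $\bar{D}_{\NS_{\omega_1},\UB}$ coding the $(\NS_{\omega_1},\UB)$-ec structures, proving it is correctly computed in $(H_{\omega_1}\cup\UB,\in)$ via $\maxUB$, and proving the three-way characterization of $(*)$-$\UB$ (Thm.~\ref{thm:char(*)}) through the Asper\'o--Schindler notion of $\UB$-honest consistency. None of this is ``existential closure is exactly what model companionship demands''; model companionship requires \emph{every} model of the companion theory to be existentially closed, i.e.\ model completeness, and that is a quantifier-elimination-type statement that has to be proved formula by formula.

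For (2)$\Rightarrow$(1) the gap is even more concrete: $\stUB$ asserts the existence of an $L(\UB)$-generic filter for $\Pmax$ in $V$ with $\NS_{\omega_1}$ saturated, and you never say how model companionship produces such a filter. The paper does it by forcing $\MM^{++}$ over $V$ using the supercompact cardinal, obtaining $(*)$-$\UB$ in $V[G]$ by Asper\'o--Schindler, using the generic $\Pi_1$-invariance (Thm.~\ref{thm:PI1invomega2}) together with model completeness of the theory of $H_{\omega_2}^V$ to conclude $H_{\omega_2}^V\prec_{\Sigma_1}H_{\omega_2}^{V[G]}$, and then pulling back characterization (\ref{thm:char(*)-3}) of Thm.~\ref{thm:char(*)} --- which is a $\Pi_2$-statement about $H_{\omega_2}$ --- from $V[G]$ to $V$. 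Note that the supercompact cardinal is needed precisely and only for this direction; in your write-up it plays no role anywhere, which is a signal that the argument as sketched cannot be the right one. Your closing paragraph correctly identifies that controlling $\NS_{\omega_1}$ under iteration is delicate, but identifying a difficulty is not the same as resolving it.
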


(\ref{thm:char(*)-modcomp-1}) implies (\ref{thm:char(*)-modcomp-2}) does not 
need the supercompact cardinal.

We give rightaway the definitions of $\maxUB$ and $\stUB$.

\begin{Definition}\label{Keyprop:maxUB}
$\maxUB$: 
There are class many Woodin cardinals in $V$, and  for all 
$G$ $V$-generic for some forcing notion $P\in V$:
\begin{enumerate}
\item\label{Keyprop:maxUB-1}
Any subset of $(2^\omega)^{V[G]}$ definable in $(H_{\omega_1}^{V[G]}\cup\mathsf{UB}^{V[G]},\in)$ 
is universally Baire in $V[G]$.
\item\label{Keyprop:maxUB-2}
Let $H$ be $V[G]$-generic for some forcing notion $Q\in V[G]$. 
Then\footnote{Elementarity is witnessed via the map defined by $A\mapsto A^{V[G][H]}$ for
$A\in \UB^{V[G]}$ and the identity on
$H_{\omega_1}^{V[G]}$ (See \cite[Notation 4.6]{VIATAMSTI} 
for the definition of $A^{V[G][H]}$).}:
\[
(H_{\omega_1}^{V[G]}\cup\UB^{V[G]},\in) \prec 
(H_{\omega_1}^{V[G][H]}\cup\UB^{V[G][H]},\in).
\] 
\end{enumerate}
\end{Definition}

We will comment more on $\maxUB$ in Section \ref{sec:Homega2}; 
for now we observe that $\maxUB$ is a form of sharp for the family of universally Baire sets 
which holds if $V$ has class many Woodin cardinals and 
is a generic extension obtained by collapsing a supercompact cardinal to become countable 
($\maxUB$ is a weakening of the conclusion of
\cite[Thm 3.4.17]{STATLARSON}). Moreover if $\maxUB$ holds in $V$, it
remains true in all further set forcing extensions of $V$. It is open whether $\maxUB$ is a direct consequence of 
suitable large cardinal axioms.

We now turn to the definition of $\stUB$, a natural maximal strengthening of Woodin's axiom $(*)$.
Key to all results of this paper is an analysis of the properties of
generic extensions by $\Pmax$ of $L(\UB)$.
In this analysis $\maxUB$ is used to argue (among other things)
that all sets of reals definable in $L(\UB)$ are universally Baire, so that most of the results established 
in \cite{HSTLARSON}
on the properties of $\Pmax$ for $L(\mathbb{R})$ can be
also asserted for $L(\UB)$.
We will use various forms of Woodin's axiom $(*)$ each stating that $\NS_{\omega_1}$ is saturated together with the existence of $\Pmax$-filters meeting certain families of dense subsets 
of $\Pmax$ definable in $L(\UB)$. 
However in this paper we will not define the $\Pmax$-forcing. 
The reason is that in the proof of all our results, we will use equivalent characterizations of the proper forms of $(*)$ which do not mention at all $\Pmax$. 
We will give at the proper stage the relevant definitions. Meanwhile we assume the reader is familiar with $\Pmax$ or can accept as a blackbox its existence as a certain forcing notion; our reference on this topic 
is \cite{HSTLARSON}.

\begin{Definition}
Let $\mathcal{A}$ be a family of dense subsets of $\Pmax$.
\begin{itemize}
\item
$\stA$ holds if $\NS_{\omega_1}$ is saturated\footnote{See \cite[Section 1.6, pag. 39]{STATLARSON} for a discussion of saturated ideals on $\omega_1$.} and 
there exists a filter $G$ on $\Pmax$ meeting all the dense sets in 
$\mathcal{A}$.
\item
$\stUB$ holds
if $\NS_{\omega_1}$ is saturated and there exists an $L(\UB)$-generic filter $G$ on $\Pmax$. 
\end{itemize}
\end{Definition}

Woodin's definition of $(*)$ \cite[Def. 7.5]{HSTLARSON}
is equivalent to $\stA+$\emph{there are class many Woodin cardinals} 
for $\mathcal{A}$ the family of 
dense subsets of $\Pmax$ existing in $L(\mathbb{R})$.

A key role in all proofs is played by the following generic absoluteness result:

\begin{Theorem}\label{thm:PI1invomega2}
Assume\footnote{We follow the convention introduced in \cite[Notation 2.1]{VIATAMSTI} to define $(V,\tau_{\NS_{\omega_1}}^V)$.} 
$(V,\tau_{\NS_{\omega_1}}^V)$ models $\ZFC_{\NS_{\omega_1}}+$\emph{ there are class many Woodin cardinals}. 
Then the $\Pi_1$-theory of $V$ for the language $\tau_{\NS_{\omega_1}}\cup\UB$ is invariant under set sized forcings.
\end{Theorem}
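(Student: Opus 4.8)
The plan is to prove the equivalent statement that $\Sigma_1$-sentences over $\tau_{\NS_{\omega_1}}\cup\UB$ are generically invariant, since a $\Pi_1$-sentence is invariant if and only if its negation is. Fix a set forcing $P\in V$ and a $V$-generic $G$. I would first dispose of the purely combinatorial core: every atomic $\tau_\ST$-formula is, by construction of $\tau_\ST$ (see \cite[Notation 2]{VIATAMSTI}), equivalent to a bounded $\in$-formula, hence $\Delta_0$ and absolute between the transitive models $V$ and $V[G]$; thus the interpretations of the $R_\psi$, the $f_\theta$, and of $\omega,\emptyset$ agree on common arguments. Next, the universally Baire predicates are handled by the generic absoluteness of $\UB$ under a proper class of Woodin cardinals: each $A\in\UB^V$ carries tree representations certifying its value, and the reinterpretation $A\mapsto A^{V[G]}$ of \cite[Notation 4.6]{VIATAMSTI} satisfies $x\in A\Leftrightarrow x\in A^{V[G]}$ for $x\in\mathbb{R}^V$; more strongly, Woodin's theorem gives $(H_{\omega_1}^V\cup\UB^V,\in)\prec(H_{\omega_1}^{V[G]}\cup\UB^{V[G]},\in)$, so every atomic formula built from the $\UB$-symbols keeps its truth value and existential statements about reals and $\UB$-sets reflect between $V$ and $V[G]$. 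This is exactly the content already isolated in \cite[Section 4.2]{VIATAMSTI}.

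The new ingredients are the constant $\omega_1$ and the predicate $\NS_{\omega_1}$, and here the point is to exploit the complexity asymmetry of stationarity over $H_{\omega_2}$. Since $V\models\ZFC_{\NS_{\omega_1}}$, the axiom $\forall x\,[\NS_{\omega_1}(x)\leftrightarrow(x\subseteq\omega_1\text{ non-stationary})]$ lets me replace every occurrence of $\NS_{\omega_1}(s)$ by its defining formula, and ``$s$ is non-stationary'', i.e. $\exists C\,(C\subseteq\omega_1\text{ is a club}\wedge C\cap s=\emptyset)$, is $\Sigma_1$ over $H_{\omega_2}$ while ``$s$ is stationary'' is $\Pi_1$. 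A club of $\omega_1$ computed in $V$ remains a club in any $\omega_1$-preserving extension, so non-stationarity is upward absolute; and by L\'evy reflection any $\Sigma_1$-witness appearing in $V[G]$ (a club, together with the reals certifying the $\UB$-atoms) sits inside some $H_\theta^{V[G]}$, so the previous paragraph's $\UB$-absoluteness pulls the real data back to $V$ while the club data is handled by the preservation of clubs. For forcings $P$ collapsing $\omega_1^V$ I would factor $P$ through $\Coll(\omega,\omega_1^V)$: once $\omega_1^V$ is countable the constant $\omega_1$ is reinterpreted at the new value and the relevant $\NS_{\omega_1}$-atoms about former $\omega_1$-subsets become vacuous, reducing the invariance claim for $P$ to the $\omega_1$-preserving case analysed above.

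Assembling these pieces, a $\Sigma_1$-witness in either of $V$, $V[G]$ produces a witness in the other: the $\Delta_0$-skeleton is absolute, the $\UB$-atoms transfer by Woodin's generic absoluteness, and each $\NS_{\omega_1}$-atom is transferred using upward persistence of clubs for its positive occurrences and the reflection of the existential club-and-real data for its negative occurrences. This yields $V\models\sigma\Leftrightarrow V[G]\models\sigma$ for every $\Sigma_1$-sentence $\sigma$, hence for every $\Pi_1$-sentence, as required.

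The hard part will be the $\NS_{\omega_1}$-predicate: stationarity is genuinely not preserved by arbitrary set forcing (clubs can be shot through co-stationary sets and $\omega_1$ can be collapsed), so the invariance cannot come from naive atom-by-atom absoluteness. What makes it go through is that, inside a sentence, no stationary co-stationary subset of $\omega_1$ is nameable from the available symbols ($\omega,\emptyset,\omega_1$, the $\Delta_0$-functions, and the lightface $\UB$-sets), so the only stationarity facts that enter are existential over $H_{\omega_2}$ and are controlled by the generically absolute $\UB$-data together with the upward persistence of clubs. Making this reduction precise, and checking that it survives the factoring through $\Coll(\omega,\omega_1^V)$, is the crux of the argument.
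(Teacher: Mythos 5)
There is a genuine gap, and it sits exactly where you place your ``hard part'': the transfer of the $\NS_{\omega_1}$-atoms cannot be done atom-by-atom, and no amount of club-preservation plus $\UB$-absoluteness will close it. Stationarity (a negative occurrence of $\NS_{\omega_1}$) is not upward absolute even for $\omega_1$-preserving forcings, and for the downward direction the $\Sigma_1$-witness lives in $V[G]$ and need not be pulled back to $V$ at all; your appeal to the fact that no stationary co-stationary set is ``nameable'' is a heuristic, not an argument, since the existential quantifiers range over arbitrary elements of $H_{\omega_2}$, not over definable ones. The proposed factoring through $\Coll(\omega,\omega_1^V)$ is also a dead end: after the collapse the constant $\omega_1$ denotes a new ordinal and the sentence must be evaluated afresh against the new $\NS_{\omega_1}$ --- nothing becomes ``vacuous.'' The paper's own remark (the Asper\'o--Veli\v{c}kovi\'c example over $L$, with the $\Delta_1$-property $\exists y\,(y=\omega_1\wedge L_{y+1}\models y=\omega_1)$) shows that the theorem is simply \emph{false} without large cardinals, whereas your argument only invokes Woodin cardinals for the generic absoluteness of the universally Baire predicates; any correct proof must use them in an essentially stronger way.

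What the paper actually does, and what is missing from your outline, is a realignment of the nonstationary ideal via generic iterations inside a stationary tower ultrapower. Given a $\Pi_1$-sentence $\phi$ true in $V$ but failing in $V[h]$, one first extends $V[h]$ to $V[g]$ where $\NS_{\omega_1}$ is saturated (Shelah's forcing below a Woodin cardinal, which is stationary set preserving, so $\neg\phi$ persists to $V[g]$ by upward absoluteness of $\Sigma_1$); one then takes $G$ generic for the countable tower $\mathbb{Q}_{<\gamma}$ with $g\in V[G]$, so that $V_\delta[g]$ becomes a countable, iterable structure inside $\Ult(V,G)$, and applies the $\Pmax$-style iteration lemma (\cite[Lemma 2.8]{HSTLARSON}) to produce an iteration of $V_\delta[g]$ of length $\omega_1^{\Ult(V,G)}$ whose final model computes $\NS_{\omega_1}$ correctly in $\Ult(V,G)$. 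This makes the iterate a genuine $\sigma_{\UB,\NS_{\omega_1}}$-substructure of $\Ult(V,G)$, so $\neg\phi$ flows upward from it into $\Ult(V,G)$, contradicting $\Ult(V,G)\equiv V\models\phi$; a symmetric stationary-tower argument handles the preservation of $\Sigma_1$-sentences. It is precisely this mechanism --- making the ``wrong'' model countable and re-iterating it so that its $\omega_1$ and its nonstationary ideal line up with those of an elementary extension of $V$ --- that replaces the atom-by-atom transfer you were attempting.
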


An objection to Thm. \ref{Thm:mainthm-1bis} is that it subsumes the Platonist standpoint that there exists a definite universe of sets.
At the prize of introducing another bit of notation, we can prove a version of Thm. \ref{Thm:mainthm-1bis} which makes perfect sense also
to a formalist.

\begin{Notation}
\emph{}

\begin{itemize}
\item
$\sigma_\ST$ is the signature containing a 
predicate symbol 
$S_\phi$ of arity $n$ for any $\tau_\ST$-formula $\phi$ with $n$-many free variables.
\item
$\sigma_{\omega,\NS_{\omega_1}}$ is
the signature $\tau_\ST\cup\sigma_\ST$.
%
\item
$T_\lUB$ is the 
$\sigma_{\omega,\NS_{\omega_1}}$-theory 
given by the axioms
\[
\forall x_1\dots x_n\,[S_\psi(x_1,\dots,x_n)\leftrightarrow 
(\bigwedge_{i=1}^n x_i\subseteq \omega^{<\omega}\wedge \psi^{L(\UB)}(x_1,\dots,x_n))]
\]
as $\psi$ ranges over the $\tau_\ST$-formulae.
\item
$\ZFC^{*-}_{\lUB}$ is the $\sigma_{\omega}$-theory 
\[
\ZFC^-_{\ST}\cup T_\lUB;
\]
\item
$\ZFC^{*-}_{\lUB,\NS_{\omega_1}}$ is the $\sigma_{\omega,\NS_{\omega_1}}$-theory 
\[
\ZFC^-_{\NS_{\omega_1}}\cup T_\lUB;
\]
\item
Accordingly we define $\ZFC^{*}_{\lUB}$, $\ZFC^*_{\lUB,\NS_{\omega_1}}$.
\end{itemize}
\end{Notation}

A key observation is that $\ZFC^-_\ST$,
$\ZFC^-_{\NS_{\omega_1}}$, $\ZFC^{*-}_{\lUB}$, $\ZFC^{*-}_{\lUB,\NS_{\omega_1}}$ are all 
\emph{definable} extension of $\ZFC$; more precisely any $\in$-structure $(M,E)$
of $\ZFC^-$ 
admits a unique extension to a 
$\tau$-structure satisfying the extra axioms outlined in the above items for $\tau$ among
the signatures written above 
(for $\tau_\ST\cup\bp{\omega_1,\NS_{\omega_1}}$
the $\in$-model must satisfy
the sentence stating the existence of a smallest uncountable cardinal). The same considerations apply to 
$\ZFC_\ST$,
$\ZFC_{\NS_{\omega_1}}$, $\ZFC^{*}_{\lUB}$, $\ZFC^{*}_{\lUB,\NS_{\omega_1}}$.

%

\begin{Theorem} \label{Thm:mainthm-1}
Let $T$ be any $\sigma_{\omega,\NS_{\omega_1}}$-theory extending 
\[
\ZFC^*_{\lUB,\NS_{\omega_1}}+\maxUB+\text{ there is a supercompact cardinal and  class many Woodin cardinals}.
\]
Then $T$ has a model companion $T^*$. 

Moreover TFAE for any for any $\Pi_2$-sentence $\psi$ for 
$\sigma_{\omega,\NS_{\omega_1}}$:
\begin{enumerate}[(A)]
\item \label{Thm:mainthm-1A}
$T^*\vdash \psi$;
\item \label{Thm:mainthm-1B}
For any complete theory 
\[
S\supseteq T,
\] 
$S_\forall\cup\bp{\psi}$ is consistent;
\item \label{Thm:mainthm-1C}
$T$ proves\footnote{$\dot{H}_{\omega_2}$ denotes a canonical $P$-name for $H_{\omega_2}$ as computed in generic extension by $P$.} 
\[
\exists P \,(P\text{ is a partial order }\wedge 
\Vdash_P\psi^{\dot{H}_{\omega_2}});
\]
\item \label{Thm:mainthm-1D}
$T$ proves
\[
L(\UB)\models[\Pmax\Vdash \psi^{\dot{H}_{\omega_2}}];
\]
\item \label{Thm:mainthm-1E}
\[
T_\forall+\ZFC^*_{\lUB,\NS_{\omega_1}}+\maxUB+\stUB\vdash\psi^{H_{\omega_2}}.
\]
\end{enumerate}

\end{Theorem}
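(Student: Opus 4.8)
The plan is to prove Theorem \ref{Thm:mainthm-1} (the formalist version) by reducing it to Theorem \ref{Thm:mainthm-1bis} together with the generic absoluteness result Theorem \ref{thm:PI1invomega2}, exploiting the fact that the theories involved are \emph{definable} extensions of $\ZFC^-$ so that the signature expansions $\sigma_{\omega,\NS_{\omega_1}}$ add no genuine expressive freedom to the class of models. Let me think about how the pieces fit.

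Let me sketch the logic of the equivalences.

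First, I would establish the existence of the model companion $T^*$ and the chain of equivalences among (A)--(E) by first nailing down the semantic characterizations, since these are the most robust. The natural backbone is the general criterion from the companion paper \cite{VIATAMSTI}: a model companion, when it exists, is axiomatized by those $\Pi_2$-sentences that are consistent with every completion of the universal fragment $T_\forall$, which is essentially the content of the equivalence (\ref{Thm:mainthm-1A})$\Leftrightarrow$(\ref{Thm:mainthm-1B}). So the first step is to verify that $T$ (being a definable extension of a theory with enough large cardinals) falls under the hypotheses of that general model-companionship criterion, and to identify $T^*$ as the theory axiomatized by (\ref{Thm:mainthm-1B}). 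The key leverage here is that $\maxUB$ is preserved by set forcing and that, by Theorem \ref{thm:PI1invomega2}, the $\Pi_1$-theory in $\tau_{\NS_{\omega_1}}\cup\UB$ is forcing-invariant; this forcing-invariance of the universal fragment is exactly what makes the model companion exist and be unique.

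Second, I would prove (\ref{Thm:mainthm-1B})$\Leftrightarrow$(\ref{Thm:mainthm-1C}): the assertion that $\psi$ is consistent with every completion of $T_\forall$ should be matched with the assertion that $T$ proves some poset $P$ forces $\psi^{\dot H_{\omega_2}}$. The forward direction uses that $H_{\omega_2}$ of a model of $\stUB$ (or more precisely a $\Pmax$-extension) realizes the right universal theory, so a consistent $\Pi_2$-sentence can be forced over $H_{\omega_2}$; the reverse direction uses that $H_{\omega_2}$ embeds into the ambient model so as to transport the $\Pi_2$-sentence downward. Then (\ref{Thm:mainthm-1C})$\Leftrightarrow$(\ref{Thm:mainthm-1D}) is the identification of the relevant poset $P$ with $\Pmax$ computed inside $L(\UB)$: here $\maxUB$ is used (as the excerpt announces) to guarantee that the sets of reals definable in $L(\UB)$ are universally Baire, so that the $\Pmax$-analysis of \cite{HSTLARSON} carried out for $L(\mathbb{R})$ applies verbatim to $L(\UB)$, and the forcing $\Pmax^{L(\UB)}$ correctly computes $H_{\omega_2}$ of the $\stUB$-model. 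Finally, (\ref{Thm:mainthm-1D})$\Leftrightarrow$(\ref{Thm:mainthm-1E}) translates the ``$\Pmax$ forces $\psi^{H_{\omega_2}}$ over $L(\UB)$'' statement into provability from $T_\forall$ augmented by $\stUB$ itself, using that under $\stUB$ there is a genuine $L(\UB)$-generic filter for $\Pmax$ making $H_{\omega_2}$ the corresponding extension, so that what $\Pmax$ forces is exactly what holds in the $H_{\omega_2}$ of the $\stUB$-model.

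The main obstacle I expect is the passage through $L(\UB)$ rather than $L(\mathbb{R})$: the entire $\Pmax$ machinery of \cite{HSTLARSON} is developed for $L(\mathbb{R})$ under $\mathsf{AD}^{L(\mathbb{R})}$, and transporting it to $L(\UB)$ requires checking that $\maxUB$ supplies the exact regularity (universal Baireness of definable sets of reals, correct stationary-tower/iterability behaviour) needed so that preconditions, iterability, and genericity arguments go through unchanged. A secondary delicate point is the bookkeeping that the definable-expansion remark preceding the theorem really does let one move freely between $\in$-models and their unique $\sigma_{\omega,\NS_{\omega_1}}$-expansions, so that $\Pi_2$-sentences in the expanded signature correspond to absolute statements about $H_{\omega_2}$ and $L(\UB)$; this is routine but must be done carefully to ensure the $S_\psi$ predicates are interpreted by the genuine $L(\UB)$-satisfaction relation in every model of $T$, which is precisely what $T_\lUB$ enforces.
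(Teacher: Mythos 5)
Your high-level skeleton of the equivalences is broadly in the right direction, but the proposal has a genuine gap at its core: you never explain where the \emph{model completeness} of the candidate model companion comes from. Forcing-invariance of the $\Pi_1$-fragment (Theorem \ref{thm:PI1invomega2}) is necessary but nowhere near sufficient for the existence of a model companion: one must actually exhibit a model complete theory whose universal fragment agrees with $T_\forall$. In the paper this is the content of Theorem \ref{thm:keythmmodcompanHomega2}, which produces a lightface-definable universally Baire set $\bar{D}_{\UB,\NS_{\omega_1}}$ (coding $(\NS_{\omega_1},\UB)$-ec $\Pmax$-preconditions) together with a recursive procedure turning every existential formula into a provably equivalent universal one in the $H_{\omega_2}$ of the $\Pmax$-extension of $L(\UB)$; Robinson's test then yields model completeness of the theory $T^*$ of $\Pi_2$-sentences provably forced by $\Pmax$ over $L(\UB)$. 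Your sentence that forcing-invariance of the universal fragment ``is exactly what makes the model companion exist'' is therefore incorrect as stated, and the proposed reduction to Theorem \ref{Thm:mainthm-1bis} does not repair this, since that theorem is itself proved from Theorem \ref{thm:keythmmodcompanHomega2} (and moreover concerns a fixed model already satisfying $\stUB$, which an arbitrary model of $T$ need not).

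The second missing ingredient is the Asper\'o--Schindler theorem that $\MM^{++}$ implies $\stUB$. It is used twice and cannot be avoided: first, to show that every model $\mathcal{M}$ of $T$ shares its $\Pi_1$-theory with some model of $T^*$ (force $\MM^{++}$ over $\mathcal{M}$ using the supercompact, apply Asper\'o--Schindler to get $\stUB$ in the extension $\mathcal{N}$, so that $H_{\omega_2}^{\mathcal{N}}$ is the $H_{\omega_2}$ of a $\Pmax$-extension of $L(\UB)^{\mathcal{N}}$ and hence a model of $T^*$, then combine Theorem \ref{thm:PI1invomega2} with Levy absoluteness and \cite[Lemma 3.19]{VIATAMSTI}); second, in the implication (\ref{Thm:mainthm-1D})$\Rightarrow$(\ref{Thm:mainthm-1C}), where the witnessing poset $P$ is one forcing $\MM^{++}$. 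Your proposal treats (\ref{Thm:mainthm-1C})$\Leftrightarrow$(\ref{Thm:mainthm-1D}) as an ``identification of $P$ with $\Pmax$ computed in $L(\UB)$'', but $\Pmax$ is a forcing over $L(\UB)$, not over $V$, so it is not itself a candidate for $P$; the actual bridge is a forcing of $\MM^{++}$ plus Asper\'o--Schindler. By contrast, the concern you flag as the ``main obstacle'' (transporting the $\Pmax$ analysis from $L(\mathbb{R})$ to $L(\UB)$ via $\maxUB$) is real but belongs to the proofs of Theorems \ref{thm:keythmmodcompanHomega2} and \ref{thm:char(*)}, not to the derivation of Theorem \ref{Thm:mainthm-1} from them.
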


Crucial to the proof of Theorems~\ref{Thm:mainthm-1} and \ref{Thm:mainthm-1bis}
is the recent breakthrough of 
Asper\'o and Schindler \cite{ASPSCH(*)} establishing that $(*)$-$\UB$
follows from $\MM^{++}$.

The paper is organized as follows:

\begin{itemize}
\item Section \ref{sec:Homega1} shows that for many 
natural signatures $\sigma_\mathcal{A}=\tau_\ST\cup\mathcal{A}$
given by certain 
families $\mathcal{A}$ of universally Baire sets, the 
the $\sigma_\mathcal{A}$-theory of $H_{\aleph_1}$ is the model companion of the $\sigma_\mathcal{A}$-theory of $V$.
These results are preliminary to the proofs of Thm.
\ref{Thm:mainthm-1}, 
\ref{Thm:mainthm-1bis}.
\item Section \ref{sec:Homega2} proves Theorems 
\ref{Thm:mainthm-1bis}, \ref{thm:PI1invomega2}, \ref{Thm:mainthm-1}. 
\end{itemize}


Our objective is to make this paper accessible to the widest possible audience (which is however limited to scholars with a strong background in forcing and large cardinals), 
this has been done at the expenses of its brevity.
We tried as much as possible to make the reading of Section 
\ref{sec:Homega1}
accessible also to readers unfamiliar with the stationary tower forcing and with
$\Pmax$. We also tried to formulate the main results of  in such a way that the
use of stationary tower forcing is confined to their proofs, and does not hamper the 
comprehension of the key ideas. This is unfortunately not possible
for many of the results in Section \ref{sec:Homega2}, where a great familiarity with the content of 
\cite{STATLARSON,HSTLARSON} is needed and assumed.
We also decided to give (overly?)
detailed arguments for all non-trivial proofs. 
Almost all proofs in 
Section \ref{sec:Homega2} 
employ the key results on the properties of $\Pmax$-forcing presented in \cite{HSTLARSON}. 
The unique proof containing mathematical ideas not at 
all present in \cite{STATLARSON,HSTLARSON} is that of
Thm \ref{thm:char(*)}, in this case we are inspired by \cite[Lemma 3.2]{ASPSCH(*)}.

\textbf{Acknowledgements:}
This research has been completed while visiting the \'Equipe de Logique Math\'ematique of the IMJ in 
Paris 7 in the fall semester of 2019.
The author thanks Boban Veli\v{c}kovi\'c, David Asper\'o, and Giorgio Venturi 
for the many fruitful discussions held on the topics
of the present paper.

\section{Model companionship versus generic absoluteness for the theory of $H_{\aleph_1}$}\label{sec:Homega1}

\subsection{Model companionship for the theory of $H_{\aleph_1}$}

\begin{definition}
Let $(V,\in)$ be a model of $\ZFC$ and $N\subseteq V$ be a transitive class (or set) which is a model of 
$\ZFC^-$.
$\mathcal{A}\subseteq\UB^V$ is $N$-closed
if whenever
$B\subseteq (2^\omega)^k$ is such that for some 
$\in$-formula $\phi(x_0,\dots,x_n)$
\[
B=\bp{(r_0,\dots,r_{k-1})\in (2^\omega)^k:\, (N,\in,A_0,\dots,A_{n-k})\models\phi(r_0,\dots,r_{k-1},A_0,\dots,A_{n-k})}
\]
with $A_0,\dots,A_{n-k}\in\mathcal{A}$, we have that $B\in\mathcal{A}$.

\end{definition}

\begin{example}
Given a model $(V,\in)$ of $\ZFC+$\emph{there are class many Woodin cardinals},
simple examples of $H_{\omega_1}$-closed families (which we will use) are:
\begin{enumerate}
\item
$\UB^V$, i.e. the family of \emph{all} universally Baire sets of $V$.
\item
$\lUB^V$, i.e. the subsets of $(2^\omega)^k$ (as $k$ varies in the natural) which are the extension
of some $\in$-formula relativized to $L(\UB)$.
\item The family $\UB\cap X$ for some $X\prec V_\theta$ with $\theta$ inaccessible.
\end{enumerate}
\end{example}

\begin{theorem}\label{thm:modcompanHomega1}
Let $(V,\in)$ be a model of 
$\ZFC$, 
and assume
$\mathcal{A}$ is $H_{\omega_1}$-closed.

Let $\tau_{\mathcal{A}}=\tau_\ST\cup\mathcal{A}$.
Then the $\tau_{\mathcal{A}}$-theory of $H_{\omega_1}$ 
is model complete and is the
model companion of the $\tau_{\mathcal{A}}$-theory of $V$.
\end{theorem}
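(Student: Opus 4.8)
The plan is to verify the two properties that characterise $T^\ast:=\mathrm{Th}_{\tau_{\mathcal A}}(H_{\omega_1})$ as the model companion of $T:=\mathrm{Th}_{\tau_{\mathcal A}}(V)$: recall that $T^\ast$ is the model companion of $T$ precisely when $T^\ast$ is model complete and $T$ and $T^\ast$ have the same universal consequences, $T_\forall=T^\ast_\forall$. I will lean on two preliminary remarks. First, $H_{\omega_1}$ is a $\tau_{\mathcal A}$-substructure of $V$: the $\Delta_0$ relation and function symbols of $\tau_\ST$ are absolute between the transitive classes $H_{\omega_1}\subseteq V$, and every $A\in\mathcal A$ is a set of reals with $(2^\omega)^{H_{\omega_1}}=(2^\omega)^V$, so $A^{H_{\omega_1}}$ and $A^V$ agree; hence every quantifier-free $\tau_{\mathcal A}$-formula is absolute between $H_{\omega_1}$ and $V$. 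Second, every $x\in H_{\omega_1}$ is coded by a real via a well-founded extensional relation on $\omega$, and the coding relation $\Cod(r,x)$ is $\Sigma_1$-expressible in $\tau_{\mathcal A}$ over $T^\ast$ (as in \cite{VIATAMSTI}).

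For model completeness it suffices (a standard characterisation of model completeness) to show that every $\tau_{\mathcal A}$-formula $\phi(x_1,\dots,x_k)$ is $T^\ast$-equivalent to a $\Sigma_1$-formula; this is exactly what the hypothesis that $\mathcal A$ is $H_{\omega_1}$-closed buys us. Let $A_1,\dots,A_m\in\mathcal A$ be the predicates occurring in $\phi$ and let $\hat\phi$ be the $\in$-$\{A_1,\dots,A_m\}$-formula obtained by unfolding every $\tau_\ST$-symbol into its $\Delta_0$ definition, so that $H_{\omega_1}\models\phi(\bar x)\Leftrightarrow(H_{\omega_1},\in,A_1,\dots,A_m)\models\hat\phi(\bar x)$ for all $\bar x\in H_{\omega_1}$. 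Set
\[
B_\phi=\{\bar r\in(2^\omega)^k:\exists\bar x\in H_{\omega_1}\,(\bigwedge_{i}\Cod(r_i,x_i)\wedge(H_{\omega_1},\in,A_1,\dots,A_m)\models\hat\phi(\bar x))\}.
\]
Since decoding is $\in$-definable in $H_{\omega_1}$ and $H_{\omega_1}$ is correct about well-foundedness of its coded relations, $B_\phi$ is definable over $(H_{\omega_1},\in,A_1,\dots,A_m)$ by an $\in$-formula, so $H_{\omega_1}$-closure yields $B_\phi\in\mathcal A$, i.e. $B_\phi$ is a predicate symbol of $\tau_{\mathcal A}$. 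By construction the $\tau_{\mathcal A}$-sentence
\[
\forall\bar x\,[\phi(\bar x)\leftrightarrow\exists\bar r\,(\bigwedge_i\Cod(r_i,x_i)\wedge B_\phi(\bar r))]
\]
holds in $H_{\omega_1}$ and hence belongs to $T^\ast$; its right-hand side is $\Sigma_1$, so every formula is $T^\ast$-equivalent to an existential one and $T^\ast$ is model complete.

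For $T_\forall=T^\ast_\forall$ I will prove the stronger statement $H_{\omega_1}\prec_{\Sigma_1}V$ in $\tau_{\mathcal A}$; by negation this forces $H_{\omega_1}$ and $V$ to agree on all universal $\tau_{\mathcal A}$-sentences, which gives $T_\forall=T^\ast_\forall$. The upward direction is immediate from the substructure relation and absoluteness of quantifier-free formulas. For the downward (reflection) direction, assume $V\models\exists y\,\phi(y,\bar a)$ with $\phi$ quantifier-free and $\bar a\in H_{\omega_1}$, witnessed by some $y_0$, and fix a large $\theta$ and a countable $M\prec(V_\theta,\in,(A\cap V_\theta)_{A\in\mathcal A})$ with $\trcl(\{\bar a\})\cup\{y_0\}\subseteq M$. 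Let $\pi\colon M\to\bar M$ be the transitive collapse. Because $\omega\subseteq M$, $\pi$ fixes every real of $M$; because $\trcl(\{\bar a\})\subseteq M$ is transitive, $\pi$ fixes $\bar a$; and by elementarity $\pi(A\cap V_\theta\cap M)$ agrees with $A$ on the (fixed) reals of $\bar M$ for each relevant $A$. Thus the countable transitive set $\bar M\in H_{\omega_1}$ satisfies $\phi(\pi(y_0),\bar a)$ with the predicates of $\mathcal A$ read off the true $A$, and since quantifier-free $\tau_{\mathcal A}$-formulas are absolute between $\bar M$ and $H_{\omega_1}$ we conclude $H_{\omega_1}\models\phi(\pi(y_0),\bar a)$, i.e. $H_{\omega_1}\models\exists y\,\phi(y,\bar a)$.

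The hard part is the $\Sigma_1$-reflection just described: one must pull a witness from an arbitrary location in $V$ down to a hereditarily countable object without disturbing the interpretation of the universally Baire predicates. The mechanism that makes this succeed is that the transitive collapse of a countable elementary submodel fixes the reals and, by elementarity, evaluates each $A\in\mathcal A$ correctly on them; the genuinely delicate point is the bookkeeping ensuring that the $\tau_\ST$-symbols and the coding relation $\Cod$ are set up so that quantifier-free $\tau_{\mathcal A}$-formulas are truly absolute across $\bar M$, $H_{\omega_1}$ and $V$. By contrast, the model-completeness half is driven entirely by $H_{\omega_1}$-closure, which lets an arbitrary formula be absorbed into a single predicate of $\mathcal A$; once the coding relation is known to be $\Sigma_1$-expressible, the remainder is routine.
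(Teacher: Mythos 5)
Your proof is correct and follows essentially the same route as the paper: the model-completeness half absorbs each formula into a single predicate $B_\phi\in\mathcal{A}$ via the coding of hereditarily countable sets by reals (you phrase the resulting equivalence with an existential quantifier over codes where the paper uses the dual universal form, but these are interchangeable for Robinson's test), and the agreement of universal consequences is exactly Levy absoluteness, which the paper cites and you re-derive via a countable elementary submodel and transitive collapse (noting only that one should restrict to the finitely many predicates of $\mathcal{A}$ actually occurring in the given $\Sigma_1$-sentence, since $\mathcal{A}$ may be uncountable).
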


\begin{proof}
Let $T$ be the $\tau_{\mathcal{A}}$-theory of $V$ and $T^*$ be the 
$\tau_{\mathcal{A}}$-theory of $H_{\omega_1}$.

By Levy's absoluteness Lemma \cite[Lemma 4.1]{VIATAMSTI}
\[
(H_{\omega_1},\tau_{\mathcal{A}}^V)\prec_1(V,\tau_{\mathcal{A}}^V),
\]
hence the two structures share the same $\Pi_1$-theory.
Therefore (by the standard characterization of model companionship --- \cite[Thm. 3.18]{VIATAMSTI}) 
it suffices to prove that $T^*$ is model complete.

By Robinson's test \cite[Lemma 3.14(c)]{VIATAMSTI}, 
it suffices to show that any existential $\tau_{\mathcal{A}}$-formula is 
$T^*$-equivalent to a universal $\tau_{\mathcal{A}}$-formula.

Let $A_1,\dots,A_k$ be the predicates in $\mathcal{A}$ appearing in $\phi$.

Let\footnote{See \cite[Def. 2.2]{VIATAMSTI} for a definition of $\Cod_\omega$.} 
\[
B=\bp{(r_1,\dots,r_n)\in (2^\omega)^n: 
(H_{\omega_1},\tau_\ST^V,A_1,\dots,A_k)\models \phi(\Cod_\omega(r_1),\dots,\Cod_\omega(r_n))}.
\]
Then $B$ belongs to $\mathcal{A}$, since $\mathcal{A}$ is  $H_{\omega_1}$-closed. 
Now for any $a_1,\dots,a_n\in H_{\omega_1}$:
\[
(H_{\omega_1},\tau_{\mathcal{A}}^V)\models \phi(a_1,\dots,a_n)
\]
\center{ if and only if }
\[
\forall r_1\dots r_n \bigwedge_{i=1}^n\Cod_\omega(r_i)=a_i\rightarrow B(r_1,\dots,r_n).
\]



This yields that
\[
T^*\vdash 
\forall x_1,\dots,x_n\,(\phi(x_1,\dots,x_n)\leftrightarrow\theta_\phi(x_1,\dots,x_n)).
\]
where $\theta_\phi(x_1,\dots,x_n)$ is the $\Pi_1$-formula in the predicate $B\in\mathcal{A}$
\[
\forall y_1,\dots,y_n\,[(\bigwedge_{i=1}^n x_i=\Cod_\omega(y_i))\rightarrow B(y_1,\dots,y_n)].
\]
\end{proof}

We leave to the reader to check that  the above proof yields the following:
\begin{corollary}
Let $T\supseteq \ZFC_{\lUB}$ 
be a $\sigma_\omega$-theory.
Then $T$ has as model companion
the $\Pi_2$-sentences $\psi$ for $\sigma_\omega$ such that
\[
T\vdash\psi^{H_{\omega_1}}.
\]
\end{corollary}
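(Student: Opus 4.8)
The plan is to show this corollary is just Theorem \ref{thm:modcompanHomega1} applied in the special case where $\mathcal{A}=\lUB^V$ and where the ambient signature has been internalized. The key point is that $\ZFC_{\lUB}$ (resp.\ $\ZFC^*_{\lUB}$) is a \emph{definable} extension of $\ZFC$, as remarked right after the definition of $T_\lUB$: every model of $\ZFC^-$ (with the requisite provisos) extends uniquely to a $\sigma_\omega$-structure satisfying the defining axioms for the predicates $S_\psi$. Hence for any model $(V,\in)\models T$, the predicates $S_\psi^V$ interpreting the symbols of $\sigma_\ST$ are exactly the extensions of the $\tau_\ST$-formulae $\psi$ relativized to $L(\UB)^V$; in particular the family $\{S_\psi^V : \psi \text{ a } \tau_\ST\text{-formula}\}$ coincides with $\lUB^V$, which the preceding example records as an $H_{\omega_1}$-closed family whenever there are class many Woodin cardinals.

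First I would fix $(V,\in)\models T$ and observe that, modulo the definable interpretation of the $S_\psi$, the $\sigma_\omega$-theory of $V$ reduces to the $\tau_{\lUB^V}$-theory of $V$ in the sense of Theorem \ref{thm:modcompanHomega1}. The corollary asserts that the model companion of $T$ is axiomatized by the $\Pi_2$-sentences $\psi$ with $T\vdash\psi^{H_{\omega_1}}$, so the content is twofold: (i) the model companion exists and is the $\sigma_\omega$-theory of $H_{\omega_1}$, and (ii) this theory is captured by those $\Pi_2$-sentences true in $H_{\omega_1}$. For (i) I would invoke Theorem \ref{thm:modcompanHomega1} directly: the $\tau_{\lUB}$-theory of $H_{\omega_1}$ is model complete and is the model companion of the $\tau_{\lUB}$-theory of $V$. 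Since $L(\UB)$ and the predicates $S_\psi$ are preserved under the relevant elementarity (the $S_\psi$ are defined by relativization to $L(\UB)$, a class definable uniformly across $V$ and $H_{\omega_1}$), the $\sigma_\omega$-reduct matches the $\tau_{\lUB}$-structure, and model completeness transfers.

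For (ii) I would use the standard fact that a model-complete theory $T^*$ is axiomatized by its $\Pi_2$-consequences, together with Levy absoluteness $(H_{\omega_1},\sigma_\omega^V)\prec_1(V,\sigma_\omega^V)$ to match the $\Pi_1$-theories. The crucial bookkeeping step is that a $\Pi_2$-sentence $\psi$ holds in the model companion iff it holds in $H_{\omega_1}$, i.e.\ iff $T\vdash\psi^{H_{\omega_1}}$; here one uses that $T$ proves enough ($\ZFC_{\lUB}$) to guarantee $H_{\omega_1}$ is correctly computed and that $\lUB^{H_{\omega_1}}$ agrees with $\lUB^V$ restricted appropriately. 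The relativization superscript $\psi^{H_{\omega_1}}$ is exactly the assertion that $(H_{\omega_1},\sigma_\omega^V)\models\psi$.

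I expect the main obstacle to be purely notational rather than conceptual: one must carefully check that passing from the external signature $\tau_\mathcal{A}$ of Theorem \ref{thm:modcompanHomega1}, whose symbols \emph{are} the universally Baire sets themselves, to the internal signature $\sigma_\omega$, whose symbols $S_\psi$ are \emph{names} for the $L(\UB)$-definable sets, does not disturb model completeness or the $\Pi_1$-elementarity. Because $\ZFC_{\lUB}$ forces each $S_\psi$ to be interpreted canonically and uniformly, this translation is faithful, and the existential-to-universal reduction carried out in the proof of Theorem \ref{thm:modcompanHomega1} goes through verbatim with $B$ now named by an appropriate $S_\psi$ rather than adjoined as a raw predicate. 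This is precisely the ``we leave to the reader to check'' content, so the proof proposal is to spell out this dictionary and then quote the theorem.
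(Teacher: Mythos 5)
Your proposal is correct and is essentially the paper's intended argument: the paper gives no separate proof, saying only that the proof of Theorem \ref{thm:modcompanHomega1} yields the corollary, and your write-up spells out exactly the needed dictionary --- that $\ZFC_{\lUB}$ pins down the interpretation of each $S_\psi$ as the corresponding lightface $L(\UB)$-definable set, so the existential-to-universal reduction produces a predicate $B$ that is itself named by some $S_{\psi'}$, making the Robinson's-test equivalences provable in $\ZFC_{\lUB}$ (hence members of the candidate $\Pi_2$-axiomatized theory even when $T$ is not complete), with Levy absoluteness matching the $\Pi_1$-fragments. The one point worth making fully explicit is this uniformity: it is precisely because the witnessing predicate is lightface-named that the equivalences land in $\{\psi\ \Pi_2 : T\vdash\psi^{H_{\omega_1}}\}$ itself, which is what makes that theory model complete rather than merely contained in a model complete theory.
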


Finally we will need the following observation:

\begin{fact}\label{fac:keyfacHomega1clos}
Assume $(V,\in)$ models that there are class many Woodin cardinals and
$\mathcal{A}\subseteq \UB$ is $H_{\omega_1}$-closed in $V$.
Let $G$ be $V$-generic for some forcing $P\in V$.

Then $\bp{A^{V[G]}:A\in \mathcal{A}}$ is $H_{\omega_1}$-closed in $V[G]$.
\end{fact}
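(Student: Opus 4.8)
The plan is to work in $V[G]$, take an arbitrary set $B$ of the form prescribed by $H_{\omega_1}$-closure, and exhibit an explicit $C\in\mathcal{A}$ with $B=C^{V[G]}$. So first I would fix in $V[G]$ an $\in$-formula $\phi$, sets $A_0,\dots,A_{n-k}\in\mathcal{A}$, and
\[
B=\bp{(r_0,\dots,r_{k-1})\in((2^\omega)^{V[G]})^k:(H_{\omega_1}^{V[G]},\in,A_0^{V[G]},\dots,A_{n-k}^{V[G]})\models\phi(r_0,\dots,r_{k-1},A_0^{V[G]},\dots,A_{n-k}^{V[G]})}.
\]
The obvious candidate preimage is the set defined in $V$ by the very same data,
\[
C=\bp{(r_0,\dots,r_{k-1})\in((2^\omega)^V)^k:(H_{\omega_1}^V,\in,A_0,\dots,A_{n-k})\models\phi(r_0,\dots,r_{k-1},A_0,\dots,A_{n-k})}.
\]
Since $\mathcal{A}$ is $H_{\omega_1}$-closed in $V$ we get $C\in\mathcal{A}\subseteq\UB^V$; hence $C^{V[G]}\in\UB^{V[G]}$ is defined, and proving $B=C^{V[G]}$ places $B$ in $\bp{A^{V[G]}:A\in\mathcal{A}}$, which is exactly what is required.

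The matter thus reduces to the equality $C^{V[G]}=B$, and for this I would appeal to the generic absoluteness of universally Baire sets under class many Woodin cardinals developed in \cite[Section 4.2]{VIATAMSTI}. The precise form I need is the elementarity of the inclusion of $H_{\omega_1}^V$ into $H_{\omega_1}^{V[G]}$ once every universally Baire set $A\in\UB^V$ is adjoined as a predicate and reinterpreted as $A^{V[G]}$:
\[
(H_{\omega_1}^V,\in,(A)_{A\in\UB^V})\prec(H_{\omega_1}^{V[G]},\in,(A^{V[G]})_{A\in\UB^V}).
\]
It is worth stressing that this is weaker than $\maxUB$(\ref{Keyprop:maxUB-2}): here the quantifiers range only over $H_{\omega_1}$ and the universally Baire sets enter solely as predicates, so no new universally Baire witness from $V[G]$ is ever needed. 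This is exactly why class many Woodin cardinals suffice and the full strength of $\maxUB$ is not invoked in this Fact.

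With this in hand the argument closes at once. The defining property of $C$ is the single first-order sentence
\[
\forall r_0\dots r_{k-1}\,[C(r_0,\dots,r_{k-1})\leftrightarrow\phi^{H_{\omega_1}}(r_0,\dots,r_{k-1},A_0,\dots,A_{n-k})]
\]
which is true in $(H_{\omega_1}^V,\in,(A)_{A\in\UB^V})$ (here $C$ and the $A_i$ are among the predicates, and $\phi^{H_{\omega_1}}$ relativizes the quantifiers of $\phi$ to $H_{\omega_1}$). Applying the elementary embedding transfers it verbatim, with $C$ replaced by $C^{V[G]}$ and each $A_i$ by $A_i^{V[G]}$, to $(H_{\omega_1}^{V[G]},\in,(A^{V[G]})_{A\in\UB^V})$. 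Unravelling, this says that a tuple of reals of $V[G]$ lies in $C^{V[G]}$ exactly when it satisfies $\phi$ over $(H_{\omega_1}^{V[G]},\in,A_0^{V[G]},\dots,A_{n-k}^{V[G]})$, i.e. $C^{V[G]}=B$.

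The one genuinely nontrivial ingredient is the displayed elementarity, and within it the treatment of the reals of $V[G]$ that are not already in $V$: an instance of $\phi$ carrying an existential quantifier may be satisfied in the extension by a new real, and one must know that the homogeneously Suslin tree representations of the $A_i$ furnished by the Woodin cardinals reflect such a witness, yielding full (not merely $\Pi_1$) elementarity over $H_{\omega_1}$. Everything else — matching free variables to reals, predicate symbols to the reinterpreted $A_i^{V[G]}$, and checking that membership in $C^{V[G]}$ is the one computed from the transferred trees — is routine bookkeeping that I would leave to the reader, in keeping with the statement's role as a preliminary observation.
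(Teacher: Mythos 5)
Your proof is correct and follows essentially the same route as the paper's: both reduce the Fact to the elementarity of $(H_{\omega_1}^{V},\tau_\ST^{V},A:A\in\mathcal{A})$ in $(H_{\omega_1}^{V[G]},\tau_\ST^{V[G]},A^{V[G]}:A\in\mathcal{A})$ (the paper cites \cite[Thm.~4.7]{VIATAMSTI} for this) and then transfer, for each defining formula, the sentence identifying the defined set with a member of $\mathcal{A}$. Your version merely spells out the witness $C$ and the transferred sentence explicitly, which the paper leaves implicit.
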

\begin{proof}
The assumptions grant that
\[
(H_{\omega_1}^{V},\tau_\ST^{V},A:A\in\mathcal{A})\prec
(H_{\omega_1}^{V[G]},\tau_\ST^{V[G]},A:A^{V[G]}\in\mathcal{A})
\]
(by \cite[Thm. 4.7]{VIATAMSTI}).
The very definition of being $H_{\omega_1}$-closed gives that the same sentences 
holding in $(H_{\omega_1}^{V},\tau_\ST^{V},A:A\in\mathcal{A})$
granting in $V$ that $\mathcal{A}$ is $H_{\omega_1}$-closed, also grant that
$\bp{A^{V[G]}:A\in \mathcal{A}}$ is $H_{\omega_1}$-closed in $V[G]$.
\end{proof}

\subsection{$\maxUB$}

From now on we will need in several occasions that $\maxUB$ holds in $V$ (recall Def. \ref{Keyprop:maxUB}).
We will always explicitly state where this assumption is used, hence if a statement
 does not mention it in the hypothesis, 
the assumption is not needed for its thesis. 

 We will use both properties of 
$\maxUB$ crucially: (\ref{Keyprop:maxUB-1}) is used in the proof of Lemma \ref{lem:UBcorr};
(\ref{Keyprop:maxUB-2}) in the proof of Fact \ref{fac:densityUBcorrect}.
Similarly they are essentially used in Remark \ref{rmk:maxUBec}.
Specifically we will need $\maxUB$ to prove that 
certain subsets of $H_{\omega_1}$
simply definable using an existential formula quantifying over $\UB$ 
are coded by a universally Baire set, 
and that this coding is absolute between
generic extensions, i.e. if 
\[
\bp{x\in H_{\omega_1}^V: (H_{\omega_1}\cup\UB,\tau_\ST^V)\models \phi(x)}
\] 
is coded by $A\in \UB^V$,
\[
\bp{x\in H_{\omega_1}^{V[G]}: (H_{\omega_1}^{V[G]}\cup\UB^{V[G]},\tau_\ST^{V[G]}))\models \phi(x)}
\] 
is coded by $A^{V[G]}\in \UB^{V[G]}$ for $\phi$ some $\tau_\ST$-formula\footnote{Note that
the structures $(H_{\omega_1}\cup\UB,\in)$, $(H_{\omega_1}\cup\UB,\tau_\ST^V)$ have the same algebra of definable sets, hence we will use one or the other as we deem most convenient,
since any set definable by some formula in one of these structures is also defined by a possibly different formula in the other. The formulation of $\maxUB$ is unaffacted if we choose any of the two structures as the one for which we predicate it.}.

It is useful to outline what is the different expressive power of the structures
 $(H_{\omega_1},\tau_{\ST}^V,A: A\in\UB^V)$ and 
 $(H_{\omega_1}\cup\UB^V,\tau_{\ST}^V)$.
 The latter can be seen as a second order extension of $H_{\omega_1}$, where
we also allow formulae to quantify over the family of universally Baire subsets of $2^{\omega}$;
in the former quantifiers only range over elements of $H_{\omega_1}$, but we can 
use the universally Baire subsets of $H_{\omega_1}$ as parameters.
This is in exact analogy between the comprehension scheme for the Morse-Kelley axiomatization of set theory
(where formulae with quantifiers ranging over classes are allowed) and the 
comprehension scheme for G\"odel-Bernays axiomatization of set theory
(where just formulae using classes as parameters and quantifiers ranging only over sets are allowed).
To appreciate the difference between the two set-up, 
note that that the axiom of determinacy for universally Baire sets
is expressible in
\[
(H_{\omega_1}\cup\UB,\tau_{\ST}^V)
\]
by the
$\tau_\ST$-sentence
\begin{quote}
\emph{For all $A\subseteq 2^\omega$ there is a winning strategy for one of the players in the game with payoff 
$A$},
\end{quote}
while in 
\[
(H_{\omega_1},\tau_{\ST}^V,A:A\in\UB^V)
\]
it is expressed by the axiom schema of $\Sigma_1$-sentences for $\tau_\ST\cup\bp{A}$
\begin{quote}
\emph{There is a winning strategy for some player in the game with payoff 
$A$}
\end{quote}
as $A$ ranges over the universally Baire sets.


We will crucially use the stronger expressive power of the structure $(H_{\omega_1}\cup\UB,\in)$
to define 
certain universally Baire sets as the extension in $(H_{\omega_1}\cup\UB,\tau_\ST^V)$ of lightface 
$\Sigma_1$-properties (according to the Levy hierarchy); properties which 
require an existential quantifier ranging over all universally Baire sets.



\section{Model companionship versus generic absoluteness for the theory of $H_{\aleph_2}$}\label{sec:Homega2}

This section is devoted to the proofs of Theorems~\ref{Thm:mainthm-1} and~\ref{Thm:mainthm-1bis}.
Along the way we will also prove (and use) Theorem \ref{thm:PI1invomega2}.


Let us give a general outline of these proofs before getting into details. From now on we assume 
the reader 
is familiar with the basic theory of $\Pmax$ as exposed in \cite{HSTLARSON}.

\begin{notation}
For a given family of universally Baire sets $\mathcal{A}$, 
$\sigma_\mathcal{A}$ is the signature $\tau_{\ST}\cup\mathcal{A}$,
$\sigma_{\mathcal{A},\NS_{\omega_1}}$ is the signature $\tau_{\NS_{\omega_1}}\cup\mathcal{A}$.
\end{notation}
The key point is to prove (just on the basis that $(V,\in)\models\maxUB+\stUB$) 
the model completeness of the
$\sigma_{\UB,\NS_{\omega_1}}$-theory of
$H_{\omega_2}$ assuming $\stUB$. To do so we use Robinson's test and we show the following:

\begin{quote}
Assuming $\maxUB$ there is a \emph{special} universally Baire set
$\bar{D}_{\UB,\NS_{\omega_1}}$ defined by an $\in$-formula \emph{(in no parameters)} 
relativized to $L(\UB)$
coding a family of $\Pmax$-preconditions with the following fundemental property:

\emph{
For any 
$\sigma_{\UB,\NS_{\omega_1}}$-formula 
$\psi(x_1,\dots,x_n)$ mentioning the universally Baire predicates $B_1,\dots,B_k$,
there is an algorithmic procedure which finds a \emph{universal} $\sigma_{\UB,\NS_{\omega_1}}$-formula 
$\theta_\psi(x_1,\dots,x_n)$ mentioning just the universally Baire predicates 
$B_1,\dots,B_k,\bar{D}_{\UB,\NS_{\omega_1}}$ such that}
\[
(H_{\omega_2}^{L(\UB)[G]},\sigma_{\bp{B_1,\dots,B_k,\bar{D}_{\UB,\NS_{\omega_1}}},\NS_{\omega_1}}^{L(\UB)[G]})
\models
\forall\vec{x}\,(\psi(x_1,\dots,x_n)\leftrightarrow\theta_\psi(x_1,\dots,x_n))
\]
\emph{whenever $G$ is $L(\UB)$-generic for $\Pmax$.}
\end{quote}
Moreover the definition of $\bar{D}_{\UB,\NS_{\omega_1}}$ and the computation
of $\theta_\psi(x_1,\dots,x_n)$ from $\psi(x_1,\dots,x_n)$ are just based on the assumption that
$(V,\in)$ is a model of $\maxUB$, hence can be replicated mutatis-mutandis in any model of 
$\ZFC+\maxUB$. 
We will need that $(V,\in)$ is a model of $\maxUB+\stUB$ just to argue that
in $V$ there is an $L(\UB)$-generic filter $G$ for $\Pmax$ such that\footnote{It is this part of our argument
where the result of Asper\`o and Schindler establishing the consistency of $\stUB$ relative to a supercompact is used in an essential way. We will address again the role of Asper\`o and Schindler's result in all our proofs in  some closing remarks.}
$H_{\omega_2}^{L(\UB)[G]}=H_{\omega_2}^V$. 
%
Since in all our arguments we will only use that $(V,\in)$ is a model of $\maxUB$ and (in some of them 
also of $\stUB$),
we will be in the position to conclude easily for the truth of Theorem~\ref{Thm:mainthm-1} and~\ref{Thm:mainthm-1bis}.

We condense the above information in the following:

\begin{theorem}\label{thm:keythmmodcompanHomega2}
There is an $\in$-formula $\phi_{\UB,\NS_{\omega_1}}(x)$ in one free variable
such that:
\begin{enumerate}
\item
$\ZFC^*_{\lUB}+\maxUB$
proves that 
\emph{$S_{\phi_{\UB,\NS_{\omega_1}}}$ is universally Baire}.
\item
Given predicate symbols $B_1,\dots,B_k$, consider the theory $T_{B_1,\dots,B_k}$ in signature 
$\sigma_{\omega}\cup\bp{B_1,\dots,B_k}$
extending $\ZFC^*_{\lUB}+\maxUB$ by the axioms:
\[
B_j\text{ is universally Baire}
\]
for all predicate symbols $B_1,\dots,B_k$.

There is a recursive procedure assigning to any \emph{existential}
formula $\phi(x_1,\dots,x_k)$ for $\sigma_{\bp{B_1,\dots,B_k},\NS_{\omega_1}}$
a \emph{universal} formula $\theta_\phi(x_1,\dots,x_k)$ for 
$\sigma_{\bp{B_1,\dots,B_k, S_{\phi_{\UB,\NS_{\omega_1}}}},\NS_{\omega_1}}$
such that $T_{B_1,\dots,B_k}$ proves that
\[
\Pmax\Vdash 
[(H_{\omega_2}^{L(\UB)[\dot{G}]},\sigma_{\UB,\NS_{\omega_1}}^{L(\UB)[\dot{G}]})
\models\forall\vec{x}\;(\phi(x_1,\dots,x_k)\leftrightarrow\theta_\phi(x_1,\dots,x_k))]
\]
where $\dot{G}\in L(\UB)$ is the canonical $\Pmax$-name for the generic filter.
\end{enumerate}

\end{theorem}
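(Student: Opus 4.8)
The plan is to run, one cardinal level higher, the argument that proved Theorem~\ref{thm:modcompanHomega1}: there the elements of $H_{\omega_1}$ were coded by reals and an existential formula was turned into a universal one by means of an $H_{\omega_1}$-closed universally Baire predicate together with Levy absoluteness; here the elements of $H_{\omega_2}^{L(\UB)[G]}$ will be coded by $\omega_1$-iterates of $\Pmax$-preconditions, and Levy absoluteness will be replaced by the generic absoluteness built into $\Pmax$-forcing over $L(\UB)$. First I would let $\phi_{\UB,\NS_{\omega_1}}(x)$ define, after relativization to $L(\UB)$, the set $\bar{D}_{\UB,\NS_{\omega_1}}$ of reals coding the iterable $\Pmax$-preconditions $(M,\in,I)$ that interpret the universally Baire sets correctly in the sense of \cite{HSTLARSON}. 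As this is a property expressible in $(H_{\omega_1}\cup\UB,\in)$ with no parameters, clause~(1) is immediate from $\maxUB$(\ref{Keyprop:maxUB-1}), which guarantees that every subset of $2^\omega$ so definable is universally Baire; thus $\ZFC^*_{\lUB}+\maxUB$ proves that $S_{\phi_{\UB,\NS_{\omega_1}}}$ is universally Baire.

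Next I would invoke the basic structure theory of $\Pmax$ from \cite{HSTLARSON}: whenever $G$ is $L(\UB)$-generic for $\Pmax$, the model $H_{\omega_2}^{L(\UB)[G]}$ is the direct limit of the $\omega_1$-iterations $M^q_{\omega_1}$ of the preconditions $q=(M,\in,I)$ lying in $G$, so that every tuple $\vec a\in H_{\omega_2}^{L(\UB)[G]}$ is of the form $j_q(\vec{\bar a})$ for some $q\in G$, where $j_q\colon M\to M^q_{\omega_1}$ is the iteration map. Using the universal Baireness of the predicates $B_1,\dots,B_k$ one obtains canonical interpretations $B_i^{M^q_{\omega_1}}$ inside each iterate, and $\maxUB$(\ref{Keyprop:maxUB-2}) guarantees that these interpretations are stable and agree with the ambient ones; consequently the satisfaction of any $\sigma_{\{B_1,\dots,B_k\},\NS_{\omega_1}}$-formula at $\vec a$ in $H_{\omega_2}^{L(\UB)[G]}$ is computed inside the iterates $M^q_{\omega_1}$.

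The heart of the matter is the passage from an existential to a universal formula. Fix an existential $\phi(\vec x)=\exists\vec y\,\chi(\vec x,\vec y)$ and parameters $\vec a=j_p(\vec{\bar a})$ with $p\in G$. Say that a precondition $q$ \emph{decides $\phi(\vec a)$ negatively} if $\vec a\in\ran(j_q)$ and $M^q_{\omega_1}\models\neg\phi(\vec a)$. The set of conditions deciding $\phi(\vec a)$ (positively or negatively) is dense below any condition carrying $\vec a$ in the range of its iteration, and positive and negative deciders are incompatible, since a common extension would force a common iterate to satisfy both $\phi$ and $\neg\phi$. By $L(\UB)$-genericity $G$ meets this dense set, and since $G$ is a filter exactly one outcome occurs along $G$; hence $\phi(\vec a)$ holds in $H_{\omega_2}^{L(\UB)[G]}$ if and only if no $q\in G$ decides $\phi(\vec a)$ negatively. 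This last condition is \emph{universal} in $q$, which is the required form. It then remains to see that the clauses ``$q\in G$'' and ``$q$ decides $\phi$ negatively'' are bounded conditions in the structure $(H_{\omega_2}^{L(\UB)[G]},\sigma_{\{B_1,\dots,B_k,\bar{D}_{\UB,\NS_{\omega_1}}\},\NS_{\omega_1}})$. The former is the recognition property of the $\Pmax$-generic, namely that a correct precondition $q$ (i.e.\ one with $\bar{D}_{\UB,\NS_{\omega_1}}(q)$) lies in $G$ exactly when its $\omega_1$-iteration computes the nonstationary ideal correctly, $I_{\omega_1}=\NS_{\omega_1}\cap M^q_{\omega_1}$, a bounded statement in the predicate $\NS_{\omega_1}$; the latter is a bounded assertion about the iterate $M^q_{\omega_1}$ whose satisfaction of $\chi$ is read off the canonical interpretations $B_i^{M^q_{\omega_1}}$ and of $\bar{D}_{\UB,\NS_{\omega_1}}$. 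Assembling these pieces yields the universal formula $\theta_\phi$, and the assignment $\phi\mapsto\theta_\phi$ is manifestly recursive. Since every step used only $\maxUB$ together with forcing by $\Pmax$ over $L(\UB)$, the equivalence is provable in $T_{B_1,\dots,B_k}$, giving clause~(2).

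The main obstacle is exactly the two expressibility claims of the previous paragraph: the definability of the generic filter $G$ from $\NS_{\omega_1}$ and $\bar{D}_{\UB,\NS_{\omega_1}}$ inside $H_{\omega_2}^{L(\UB)[G]}$, and the absoluteness of the canonical interpretations $B_i^{M^q_{\omega_1}}$ that is needed to make ``$q$ decides $\phi$ negatively'' genuinely quantifier-free. Both rest on the fine iterability and correctness theory of $\Pmax$-preconditions developed in \cite{HSTLARSON} and on both clauses of $\maxUB$; the recognition of $G$, which is the one genuinely new ingredient here, is to be modelled on the argument of \cite[Lemma 3.2]{ASPSCH(*)}.
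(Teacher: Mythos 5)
Your overall architecture is the right one (a universally Baire set coding a class of countable iterable structures, and a universal formula quantifying over those structures and their iterations), but the central step — the reduction of $\exists$ to $\forall$ — has a genuine gap. You claim that positive and negative ``deciders'' of $\phi(\vec a)$ are incompatible because ``a common extension would force a common iterate to satisfy both $\phi$ and $\neg\phi$.'' This is false for $\Sigma_1$-formulae: if $q'\leq q$, the final iterate of $q$ sits inside that of $q'$ as a substructure, and a $\Sigma_1$-statement false in the smaller iterate can perfectly well become true in the larger one once a witness appears. Consequently your proposed equivalence ``$\phi(\vec a)$ holds in $H_{\omega_2}^{L(\UB)[G]}$ iff no $q\in G$ decides $\phi(\vec a)$ negatively'' fails in the left-to-right direction: it requires that the truth of $\phi(\vec a)$ in $H_{\omega_2}^{L(\UB)[G]}$ reflect down to \emph{every} condition in $G$ carrying $\vec a$, i.e.\ that the inclusion of each final iterate into $H_{\omega_2}$ be $\Sigma_1$-elementary for $\sigma_{\UB,\NS_{\omega_1}}$ — and that is precisely what is not automatic and cannot be obtained from genericity alone.

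What the paper does instead is to restrict the universal quantifier in $\theta_\phi$ to a smaller, definable class: the \emph{$(\NS_{\omega_1},\UB)$-existentially closed} preconditions $M$, namely those $\UB$-correct $M$ for which every correct $\omega_1$-iteration into a $\UB$-correct superstructure induces a $\Sigma_1$-elementary map of $H_{\omega_2}^M$ into the target's $H_{\omega_2}$. The formula $\phi_{\UB,\NS_{\omega_1}}$ defines (the codes of) this class, and $\maxUB$ makes it universally Baire, as in your clause (1); but two substantive facts are then needed and are the real content of the proof: (a) for such $M$ the iteration maps into $H_{\omega_2}^{L(\UB)[G]}$ are $\Sigma_1$-elementary (giving the missing direction of the equivalence), and (b) a density statement — every $\vec A\in H_{\omega_2}$ and every relevant $B\in\UB$ are captured by a correct iteration of some such $M$ — giving the other direction. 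Both are established by Theorem \ref{thm:char(*)}, a three-way characterization of $\stUB$ proved via Asper\'o--Schindler's $\UB$-honest-consistency criterion; your proposal has no counterpart to this, and without it the construction of $\theta_\phi$ does not go through.
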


\subsection{Proofs of Thm.~\ref{Thm:mainthm-1}, and of 
(\ref{thm:char(*)-modcomp-1})$\to$(\ref{thm:char(*)-modcomp-2})
of Thm.~\ref{Thm:mainthm-1bis}}

Theorem~\ref{Thm:mainthm-1}, (\ref{thm:char(*)-modcomp-1})$\to$(\ref{thm:char(*)-modcomp-2})
of Theorem~\ref{Thm:mainthm-1bis} are immediate corollaries
of the above theorem combined with Asper\`o and Schindler's proof that 
$\MM^{++}$ implies $\stUB$, and with Theorem \ref{thm:PI1invomega2}.


We start with the proof of
(\ref{thm:char(*)-modcomp-1})$\to$(\ref{thm:char(*)-modcomp-2}) of Thm.~\ref{Thm:mainthm-1bis}
assuming Thm. \ref{thm:keythmmodcompanHomega2} and Thm. \ref{thm:PI1invomega2}:

\begin{proof}
%
Assume $(V,\in)$ models $\stUB$. Then there is a $\Pmax$-filter $G\in V$ such that
$H_{\omega_2}^{L(\UB)[G]}=H_{\omega_2}^V$.
By Thm. \ref{thm:keythmmodcompanHomega2} and Robinson's test, 
we get that the first order $\sigma_{\UB,\NS_{\omega_1}}$-theory of 
$H_{\omega_2}^{L(\UB)[G]}$ is model complete.
By Levy's absoluteness \cite[Lemma 4.1]{VIATAMSTI}, $H_{\omega_2}^{L(\UB)[G]}$ is a $\Sigma_1$-elementary substructure of $V$ also according to the signature $\sigma_{\UB,\NS_{\omega_1}}$.
We conclude (by \cite[Thm. 3.18]{VIATAMSTI}), since the two theories share the same $\Pi_1$-fragment.
\end{proof}

The proof of the converse implication requires more information on 
$\bar{D}_{\UB,\NS_{\omega_1}}$ then what is conveyed in Thm. \ref{thm:keythmmodcompanHomega2}.
We defer it to a later stage.
%
%
%
%

\smallskip

We now prove Thm.~\ref{Thm:mainthm-1}:
\begin{proof}
By Thm. \ref{thm:keythmmodcompanHomega2} and Robinson's test, the
$\Pi_2$-sentences for $\sigma_{\lUB,\NS_{\omega_1}}$ which are $\ZFC^*_{\lUB}+\maxUB$-provably 
forced to 
hold in the $H_{\omega_2}$ of the 
generic extension of $L(\UB)$ by $\Pmax$ form a model complete theory.

Let us call $T^*_{\lUB,\NS_{\omega_1}}$ this model complete theory.

We now show that any model of $\ZFC^*_{\lUB,\NS_{\omega_1}}+\maxUB+$\emph{there is a supercompact cardinal} 
has the same 
$\Pi_1$-theory of some model of $T^*_{\lUB,\NS_{\omega_1}}$.
This suffices by \cite[Lemma 3.19]{VIATAMSTI}.

$\stUB$ holds in any model of $\MM^{++}$ by Schindler and Asper\`o's breakthrough \cite{ASPSCH(*)}.
It is a standard result that one can force $\MM^{++}$ over any model of 
$\ZFC+$\emph{there is a supercompact cardinal} \cite{FORMAGSHE}.

Let $\mathcal{M}$ be any model of 
$\ZFC+\maxUB+$\emph{there is a supercompact cardinal} 
and $\mathcal{N}$ be a model of $\MM^{++}$ obtained as a forcing extension
of $\mathcal{M}$ by the methods of \cite{FORMAGSHE}.

By  Thm. \ref{thm:PI1invomega2}, $\mathcal{N}$ has the same $\Pi_1$-theory of 
$\mathcal{M}$ according to the signature $\sigma_{\lUB,\NS_{\omega_1}}$.
Now $\mathcal{N}$
is a model of $\MM^{++}$ and therefore of $\stUB$, by \cite{ASPSCH(*)}.

Hence $H_{\omega_2}^{\mathcal{N}}$ is also (according to $\mathcal{N}$)
the $H_{\omega_2}$ of the generic extension
of $L(\UB)^{\mathcal{N}}$ by $\Pmax$.
Since $H_{\omega_2}^{\mathcal{N}}\prec_1\mathcal{N}$ also according to the signature
$\sigma_{\lUB,\NS_{\omega_1}}$, we conclude that $H_{\omega_2}^{\mathcal{N}}$
and $\mathcal{M}$ share the same $\Pi_1$-theory.
But $H_{\omega_2}^{\mathcal{N}}$ is a model of $T^*_{\lUB,\NS_{\omega_1}}$.

We are left with the proof of the equivalence between \ref{Thm:mainthm-1A},
\ref{Thm:mainthm-1B}, \ref{Thm:mainthm-1C}, \ref{Thm:mainthm-1D}, \ref{Thm:mainthm-1E}.

\begin{description}
\item[\ref{Thm:mainthm-1A}$\Longleftrightarrow$\ref{Thm:mainthm-1B}] By \cite[Lemma 3.19]{VIATAMSTI}
$T$ and $T^*$ have this property.
\item[\ref{Thm:mainthm-1A}$\Longrightarrow$\ref{Thm:mainthm-1E}]
By Levy's absoluteness  if $\mathcal{M}$ models 
\[
T_\forall+\ZFC^*_{\lUB,\NS_{\omega_1}}+\maxUB+\stUB
\]
$H_{\omega_2}\models T^*$. Therefore if $T^*\vdash \psi$,
$\mathcal{M}\models\psi^{H_{\omega_2}}$.
\item[\ref{Thm:mainthm-1E}$\Longrightarrow$\ref{Thm:mainthm-1D}]
By definition of $\stUB$.
\item[\ref{Thm:mainthm-1D}$\Longrightarrow$\ref{Thm:mainthm-1C}]
If $P$ forces $\MM^{++}$, by Asper\`o and Schindler result $P\Vdash\stUB$, hence 
$P\Vdash\psi^{H_{\omega_2}}$ by \ref{Thm:mainthm-1D}.
\item[\ref{Thm:mainthm-1C}$\Longrightarrow$\ref{Thm:mainthm-1B}]
Given some complete $S\supseteq T$, and a model $\mathcal{M}$ of $S$, 
find $\mathcal{N}$ forcing extension of $\mathcal{M}$ which models $\psi$.
By Thm. \ref{thm:PI1invomega2} and Levy's absoluteness \cite[Lemma 4.1]{VIATAMSTI}, 
$H_{\omega_2}^{\mathcal{N}}\models S_\forall$ and we are done.
\end{description}
\end{proof}

\subsection{Proofs of Thm.~\ref{thm:keythmmodcompanHomega2}
and Thm. \ref{thm:PI1invomega2}}
The rest of this section is devoted to the proof of Thm.~\ref{thm:keythmmodcompanHomega2}
and Thm. \ref{thm:PI1invomega2}.

Let us first set up the proper language and terminology in order to deal with the $\Pmax$-technology.

\subsubsection{Generic iterations of countable structures}
\begin{definition}\cite[Def. 1.2]{HSTLARSON}
Let $M$ be a transitive countable 
model of $ZFC$. 
Let $\gamma$ be an ordinal less than or equal to $\omega_1$. 
An iteration $\mathcal{J}$ of $M$ of length $\gamma$ 
consists of models $\ap{M_\alpha:\,\alpha \leq\gamma}$, sets $\ap{G_\alpha:\,\alpha< \gamma}$ 
and a commuting family of elementary embeddings 
\[
\ap{j_{\alpha\beta}: M_\alpha\to M_\beta:\, \alpha\leq\beta\leq\gamma}
\]
such that:
\begin{itemize}
\item
$M_0 = M$,
\item
each $G_\alpha$ is an $M_\alpha$-generic filter for 
$(\pow{\omega_1}/\NS_{\omega_1})^{M_\alpha}$,
\item
each $j_{\alpha\alpha}$ is the identity mapping,
\item
each $j_{\alpha\alpha+1}$ is the ultrapower embedding induced by $G_\alpha$,
\item
for each limit ordinal $\beta\leq\gamma$,
$M_\beta$ is the direct limit of the system
$\bp{M_\alpha, j_{\alpha\delta} :\, \alpha\leq\delta<\beta}$, and for each $\alpha<\beta$, $j_{\alpha\beta}$ is the induced embedding.
\end{itemize}
\end{definition}

We adopt the convention to denote an iteration $\mathcal{J}$ just
by $\ap{j_{\alpha\beta}:\, \alpha\leq\beta\leq\gamma}$, we also stipulate that
if $X$ denotes the domain of $j_{0\alpha}$, $X_\alpha$ or $j_{0\alpha}(X)$ will denote
the domain of $j_{\alpha\beta}$ for any $\alpha\leq\beta\leq\gamma$.

\begin{definition}
Let $A$ be  a universally Baire sets of reals.
$M$ is $A$-iterable if:
\begin{enumerate}
\item $M$ is transitive and such that $H_{\omega_1}^M$ is countable.
\item 
$M\models\ZFC+\NS_{\omega_1}$\emph{ is precipitous}.
\item
Any iteration 
\[
\bp{j_{\alpha\beta}:\alpha\leq\beta\leq\gamma}
\] 
of $M$ is well founded and such that 
$A\cap M_\beta=j_{\alpha\beta}(A\cap M_0)$ for all $\beta\leq\gamma$.
\end{enumerate}
\end{definition}

\subsubsection{Generic invariance of the universal fragment of the $\sigma_{\UB,\NS_{\omega_1}}$-theory of $V$}



We now prove Theorem \ref{thm:PI1invomega2} .
\begin{proof}
Let $\phi$ be a $\Pi_1$-sentence for $\sigma_{\mathcal{A},\NS_{\omega_1}}$ which holds in $V$.
Assume that for some forcing notion $P$, $\phi$ fails in $V[h]$ with $h$ $V$-generic for $P$.
By forcing over $V[h]$ with the appropriate stationary set preserving (in $V[h]$) 
forcing notion (using a Woodin cardinal $\gamma$ of $V[h]$), we may assume that $V[h]$ is extended to a
generic extension $V[g]$ such that $V[g]$ models $\NS_{\omega_1}$ is 
saturated\footnote{A result of Shelah whose outline can be found in \cite[Chapter XVI]{SHEPRO}, or \cite{woodinBOOK}, or in an \href{https://ivv5hpp.uni-muenster.de/u/rds/sat_ideal_better_version.pdf}{handout} of Schindler available on his webpage.}.
Since $V[g]$ is an extension of $V[h]$ by a stationary set preserving forcing and there are in $V[h]$ class many Woodin cardinals, we get that
$V[h]\sqsubseteq V[g]$ with respect to $\sigma_{\UB,\NS_{\omega_1}}$.
Since $\Sigma_1$-properties are upward absolute and $\neg\phi$ holds in $V[h]$, 
$\phi$ fails in $V[g]$ as well.

Let $\delta$ be inaccessible in $V[g]$ and let $\gamma>\delta$ be a Woodin cardinal.

Let $G$ be $V$-generic for $\tow{T}^{\omega_1}_\gamma$ (the countable tower $\mathbb{Q}_{<\gamma}$ according to \cite[Section 2.7]{STATLARSON})
and such that  $g\in V[G]$.
Let $j_G:V\to\Ult(V,G)$ be the induced ultrapower embedding.

Now remark that $V_\delta[g]\in \Ult(V,G)$ is $B^{V[G]}$-iterable for all 
$B\in \mathsf{UB}^{V}$ (since 
$V_\eta[g]\in \Ult(V,G)$ for all $\eta<\gamma$, and this suffices to check that $V_\delta[g]$ 
is $B^{V[G]}$-iterable for all $B\in\mathsf{UB}^V$, see \cite[Thm. 4.10]{HSTLARSON}).

By \cite[Lemma 2.8]{HSTLARSON} applied in $\Ult(V,G)$, there exists in $\Ult(V,G)$ an iteration 
$\mathcal{J}=\bp{j_{\alpha\beta}:\alpha\leq\beta\leq\gamma=\omega_1^{\Ult(V,G)}}$ of 
$V_\delta[g]$ such that
$\NS_{\omega_1}^{X_{\gamma}}=\NS_{\omega_1}^{\Ult(V,G)}\cap X_{\gamma}$, where 
$X_\alpha=j_{0\alpha}(V_\delta[g])$ for all $\alpha\leq\gamma=\omega_1^{\Ult(V,G)}$.

This gives that $X_{\gamma}\sqsubseteq \Ult(V,G)$ for $\sigma_{\UB,\NS_{\omega_1}}$.
Since $V_\delta[g]\models\neg\phi$, so does $X_{\gamma}$, by elementarity.
But $\neg\phi$ is a $\Sigma_1$-sentence, hence it is upward absolute for superstructures, therefore
$\Ult(V,G)\models\neg\phi$. This is a contradiction, since $\Ult(V,G)$ is elementarily equivalent to $V$ for
$\sigma_{\UB,\NS_{\omega_1}}$, and $V\models\phi$.

\smallskip

A similar argument shows that if $V$ models a $\Sigma_1$-sentence $\phi$ for 
$\sigma_{\UB,\NS_{\omega_1}}$
this will remain true in all of its generic extensions:

Assume $V[h]\models\neg\phi$
for some $h$ $V$-generic for some forcing notion $P\in V$. 
Let $\gamma>|P|$ be a Woodin cardinal, and let $g$ be $V$-generic for\footnote{$\tow{T}_\gamma$ is the full stationary tower of height $\gamma$ whose conditions are 
stationary sets in $V_\gamma$, denoted as $\mathbb{P}_{<\gamma}$ in \cite{STATLARSON},
see in particular \cite[Section 2.5]{STATLARSON}.}
 $\tow{T}_\gamma$ with $h\in V[g]$ and $\crit(j_g)=\omega_1^V$ (hence there is in $g$ some stationary set of $V_\gamma$ concentrating on countable sets). 
Then $V[g]\models\phi$ since:
\begin{itemize} 
\item
$V_\gamma\models\phi$, since $V_\gamma\prec_{1} V$ for $\sigma_{\UB,\NS_{\omega_1}}$
by \cite[Lemma 4.1]{VIATAMSTI};
\item
$V_\gamma^{\Ult(V,g)}=V_\gamma^{V[g]}$, since $V[g]$ models that 
$\Ult(V,g)^{<\gamma}\subseteq \Ult(V,g)$;
\item
$V_\gamma^{\Ult(V,g)}\models\phi$, by elementarity of $j_g$, since 
$j_g(V_\gamma)=V_\gamma^{\Ult(V,g)}$;
\item
$V_\gamma^{V[g]}\prec_{\Sigma_1}V[g]$ with respect to $\sigma_{\mathcal{A},\NS_{\omega_1}}$,
again by \cite[Lemma 4.1]{VIATAMSTI} applied in $V[g]$.
\end{itemize}

Now repeat the same argument as before to the $\Pi_1$-property $\neg\phi$,
with $V[h]$ in the place of $V$ and $V[g]$ in the place of $V[h]$. 
\end{proof}

Asper\'o and Veli\v{c}kovi\`c provided the following basic counterexample to the conclusion of the theorem
if large cardinal assumptions are dropped.
\begin{remark}
Let $\phi(y)$ be the $\Delta_1$-property in $\tau_{\NS_{\omega_1}}$
\[
\exists y (y=\omega_1 \wedge L_{y+1}\models y=\omega_1).
\]
Then $L$ models this property, while the property fails in any forcing extension of $L$ which collapses 
$\omega_1^L$ 
to become countable.
\end{remark}

\subsubsection{Proof of Thm. \ref{thm:keythmmodcompanHomega2}}
We now turn to the proof of Thm. \ref{thm:keythmmodcompanHomega2}.

What we will do first is to sketch a different proof of Thm.~\ref{thm:modcompanHomega1}.
This will give us the key intuition on how to define $\bar{D}_{\UB,\NS_{\omega_1}}$.

\subsubsection{A different proof of Thm.~\ref{thm:modcompanHomega1}.}

Let $M$ be a countable transitive model of 
$\ZFC+$\emph{there are class many Woodin cardinals}.
Then it will have its own version of Thm.~\ref{thm:modcompanHomega1}.
In particular it will model that the theory of $(H_{\omega_1}^M,\sigma_{\UB^M}^M)$ 
is model complete,
and also that $\UB^M$ is an $H_{\omega_1}$-closed family of universally Baire sets in $M$.

Now assume that there is a countable family $\UB_M$ of universally Baire sets in $V$
which is $H_{\omega_1}$-closed in $V$ and is such that
$\UB^M=\bp{B\cap M: B\in\UB_M}$.
Then 
\[
(H_{\omega_1}^M,\sigma_{\UB^M}^M)=(H_{\omega_1}^M,\bp{B\cap M: B\in\UB_M})\sqsubseteq
(H_{\omega_1}^V,\sigma_{\UB_M}^V)
\]
But $\UB_M$ being $H_{\omega_1}$-closed in $V$ entails that  the first order theory of
$(H_{\omega_1}^V,\sigma_{\UB_M}^V)$ is model complete.
In particular if $(H_{\omega_1}^M,\sigma_{\UB^M}^M)$ and $(H_{\omega_1}^V,\sigma_{\UB_M}^V)$
are elementarily equivalent, then 
\[
(H_{\omega_1}^M,\bp{B\cap M: B\in\UB_M})\prec
(H_{\omega_1}^V,\sigma_{\UB_M}^V).
\]
The setup described above is quite easy to realize (for example $M$ could the transitive collapse of
some countable $X\prec V_\theta$ for some large enough $\theta$); in particular for any
$a\in H_{\omega_1}$ and $B_1,\dots,B_k\in \UB$, we can find $M$ countable transitive model of a suitable fragment of $\ZFC$ with $a\in H_{\omega_1}^M$
and $\UB_M\supseteq \bp{B_1,\dots,B_k}$ countable and $H_{\omega_1}$-closed family of $\UB$-sets
in $V$, such that:
\begin{itemize}
\item $\UB^M=\bp{B\cap M: B\in\UB_M}$;
\item the first order theory $T_{\UB_M}$ of $(H_{\omega_1}^V,\sigma_{\UB_M}^V)$ is model complete;
\item $(H_{\omega_1}^M,\bp{B\cap M: B\in\UB_M})$ models $T_{\UB_M}$.
\end{itemize}
Letting $B_M=\prod\UB_M$, $(H_{\omega_1}\cup\UB,\in)$ is able to compute correctly whether 
$B_M$ encodes a set $\UB_M$ such that the pair $(\UB_M,M)$ satisfies the above list of requirements;
here we use crucially the fact that being a model complete theory is a $\Delta_0$-property, and also
that it is possible to encode
the structure
$(H_{\omega_1}^V,\sigma_{\UB^M}^V)$ in a single universally Baire set\footnote{See \cite[Def. 2.2]{VIATAMSTI} for the definition of $\WFE_\omega$ and $\Cod_\omega$.} 
(for example $\WFE_\omega\times B_M$).

In particular $(H_{\omega_1}\cup\UB,\in)$ correctly computes 
the set $D_{\UB}$ of $M\in H_{\omega_1}$ such that there exists a universally Baire set
$B_M=\prod\UB_M$ with the property that the pair $(M,\UB_M)$ realizes the above set of requirements.
By $\maxUB$, $\bar{D}_{\UB}=\Cod_\omega^{-1}[D_{\UB}]$ is a universally Baire set $\bar{D}_{\UB}$.

Note moreover that $\bar{D}_{\UB}$ is defined by a $\in$-formula $\phi_\UB(x)$ 
in no extra parameters;
in particular for any model $\mathcal{W}=(W,E)$ of $\ZFC+\maxUB$, we can define 
$\bar{D}_{\UB}$ in
$\mathcal{W}$ and all its properties outlined above will hold relativized to $\mathcal{W}$.

For fixed universally Baire sets $B_1,\dots,B_k$ the set $D_{\UB,B_1,\dots,B_k}$ of 
$M\in D_\UB$ such that there is a witness $\UB_M$ of $M\in D_{\UB}$ with $B_1,\dots,B_k\in \UB_M$ is 
also definable in 
\[
(H_{\omega_1}\cup\UB,\in)
\] 
in parameters $B_1,\dots,B_k$.
Hence by $\maxUB$ $\Cod_\omega^{-1}[D_{\UB,B_1,\dots,B_k}]=\bar{D}_{\UB,B_1,\dots,B_k}$ is universally Baire (note as well that $\bar{D}_{\UB,B_1,\dots,B_k}$ belongs to any 
$L(\UB)$-closed family $\mathcal{A}$ containing $B_1,\dots,B_k$).

Now take any $\Sigma_1$-formula $\phi(\vec{x})$ for $\sigma_{\UB}$ mentioning just the universally Baire
predicates $B_1,\dots,B_k$.
It doesn't take long to realize that for all $\vec{a}$ in $H_{\omega_1}$
\[
(H_{\omega_1}^V,\sigma_{\UB}^V)\models\phi(\vec{a})
\]
if and only if
\[
(H_{\omega_1}^M,\sigma_{\UB_M}^M)\models\phi(\vec{a})
\text{\emph{ for all $M\in D_{\UB,B_1,\dots,B_k}$ with 
$\vec{a}\in H_{\omega_1}^M$}. }
\]


But $\bar{D}_{\UB,B_1,\dots,B_k}$ is universally Baire, so the above can be formulated also as:
\[
\forall r\in\bar{D}_{\UB,B_1,\dots,B_k}[\vec{a}\in H_{\omega_1}^{\Cod(r)}\rightarrow 
(H_{\omega_1}^{\Cod(r)},\sigma_{\UB_{\Cod(r)}}^{\Cod(r)})\models\phi(\vec{a})].
\]
The latter is a $\Pi_1$-sentence in the universally Baire parameter $\bar{D}_{\UB,B_1,\dots,B_k}$.

This is exactly a proof that Robinson's test applies to the $\sigma_{\UB}$-first order theory of $H_{\omega_1}$ assuming
$\maxUB$; i.e. we have briefly sketched a different (and much more convoluted) 
proof of the conclusion of 
Thm.~\ref{thm:modcompanHomega1} (using as hypothesis Thm.~\ref{thm:modcompanHomega1} itself).
What we gained however is an insight on how to prove Theorem~\ref{thm:keythmmodcompanHomega2}.

We will consider the set $D_{\NS_{\omega_1},\UB}$ 
of $M\in D_{\UB}$ such that:
\begin{itemize} 
\item
$(M,\NS_{\omega_1}^M)$ is a $\Pmax$-precondition 
which is $B$-iterable for all $B\in \UB_M$ (according to \cite[Def. 4.1]{HSTLARSON});
\item
$j_{0\omega_1}$ is a $\Sigma_1$-elementary embedding of $H_{\omega_2}^M$ into $H_{\omega_2}^V$
for $\sigma_{\UB_M,\NS_{\omega_1}}$ whenever
$\mathcal{J}=\bp{j_{\alpha\beta}:\alpha\leq\beta\leq\omega_1}$ is an iteration of $M$ with 
$j_{0\omega_1}(\NS_{\omega_1}^M)=\NS_{\omega_1}^V\cap j_{0\omega_1}(H_{\omega_2}^M)$.
\end{itemize}

It will take a certain effort to prove that  assuming $(*)$-$\UB$:
\begin{itemize}
\item for any $A\in H_{\omega_2}$ and
$B\in\UB$, we can find $M\in D_{\NS_{\omega_1},\UB}$ with $B\in \UB_M$, $a\in H_{\omega_2}^M$,
and an iteration $\mathcal{J}=\bp{j_{\alpha\beta}:\alpha\leq\beta\leq\omega_1}$ of $M$ with 
$j_{0\omega_1}(\NS_{\omega_1})=\NS_{\omega_1}^V\cap j_{0\omega_1}(H_{\omega_2}^M)$  such that
$j_{0\omega_1}(a)=A$.
\item
$D_{\NS_{\omega_1},\UB}$ is correctly computable in $(H_{\omega_1}\cup\UB,\in)$.
\end{itemize}
But this effort will pay off since we will then be able to prove the model completeness of the theory
\[
(H_{\omega_2},\sigma_{\UB,\NS_{\omega_1}}^V)
\]
using Robinson's test with $\Cod_\omega^{-1}[D_{\NS_{\omega_1},\UB}]$ in the place of $\bar{D}_{\UB}$ 
and replicating  in the new setting
what was sketched before for $(H_{\omega_1},\sigma_{\UB,\NS_{\omega_1}}^V)$.

We now get into the details.

\subsubsection{$\UB$-correct models} 
 
\begin{notation}
Given a countable family 
$\mathcal{A}=\bp{B_n:n\in\omega}$ of universally Baire sets with each $B_n\subseteq (2^{\omega})^{k_n}$,
we say that $B_\mathcal{A}=\prod_{n\in\omega}B_n\subseteq \prod_n(2^{\omega})^{k_n}$ is a code for 
$\bp{B_n:n\in\omega}$.

Clearly $B_\mathcal{A}$ is a universally Baire subset of the Polish space $\prod_n(2^{\omega})^{k_n}$.
\end{notation}

\begin{definition}
$T_\mathsf{UB}$ is the $\in$-theory of 
\[
(H_{\omega_1},\sigma_{\UB}).
\]


A transitive model of $\ZFC$ $(M,\in)$ is $\mathsf{UB}$-correct if 
there is a  $H_{\omega_1}$-closed (in $V$) family
$\mathsf{UB}_M$ of universally Baire sets in $V$
such that:
\begin{itemize}
\item The map 
\begin{align*}
\Theta_M:&\UB_M\to M\\
&A\mapsto A\cap M
\end{align*}
is injective.
\item
$(M,\in)$ models that $\bp{A\cap M: A\in \mathsf{UB}_M}$ is the family of universally Baire subsets of $M$.

\item Letting $T_{\UB_M}$ be the theory of $(H_{\omega_1}, \tau_{\mathsf{ST}}^V,\mathsf{UB}_M)$
\[
(H_{\omega_1}^M, \tau_{\mathsf{ST}}^M,A\cap M: A\in \mathsf{UB}_M)\models T_{\UB_M}.
\]
\item If $M$ is countable,
$M$ is $A$-iterable for all $A\in \mathsf{UB}_M$.
\end{itemize}
\end{definition}

Remark (by Thm.~\ref{thm:modcompanHomega1}) that if $M$ is $\UB$-correct, $T_{\UB_M}$ is model complete, 
since $\UB_M$ is (in $V$) a $H_{\omega_1}$-closed family of 
universally Baire sets.

\begin{notation}
$D_{\UB}$ denotes the set of countable $\mathsf{UB}$-correct $M$; $\bar{D}_\UB=\Cod_\omega^{-1}[D_\UB]$.

For each $M$ $\UB_M$ is a witness that $M\in D_\UB$ and $B_{\UB_M}=\prod\UB_M$ is a universally Baire coding this 
witness\footnote{The Fact below shows that the map $M\mapsto (\UB_M,B_{\UB_M})$, can be chosen in $L(\UB)$.}.

For universally Baire sets $B_1,\dots,B_k$, $E_{\UB,B_1,\dots,B_k}$
denotes the set of $M\in D_\UB$ with $B_1,\dots,B_k\in \UB_M$ for some witness $\UB_M$ that $M\in D_\UB$;
$\bar{E}_{\UB,B_1,\dots,B_k}=\Cod_\omega^{-1}[E_{\UB,B_1,\dots,B_k}]$.
\end{notation}

\begin{fact}  
$(V,\in)$ models $M$\emph{ is countable and $\UB$-correct as witnessed by $\UB_M$}
if and only if so does 
$(H_{\omega_1}\cup{\UB},\in)$. 

Consequently 
the set $D_{\mathsf{UB}}$ of countable $\mathsf{UB}$-correct $M$ is properly computed in
$(H_{\omega_1}\cup\UB,\in)$.

Therefore assuming $\maxUB$ 
\[
\bar{D}_\UB=\mathrm{Cod}^{-1}[D_{\mathsf{UB}}]
\] 
is universally Baire. 

Moreover there is in $L(\UB)$ a definable map $M\mapsto \UB_M$ assigning to each $M\in D_\UB$ 
a countable family $\UB_M$ witnessing it.

The same holds for $\bar{E}_{\UB,B_1,\dots,B_k}$ for given universally Baire sets $B_1,\dots,B_k$.
\end{fact}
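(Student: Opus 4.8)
The plan is to prove that the syntactic assertion ``$M$ is countable and $\UB$-correct as witnessed by $\UB_M$'' is computed identically in $(V,\in)$ and in the second-order structure $(H_{\omega_1}\cup\UB,\in)$, and then to read off the three consequences (proper computability of $D_\UB$, universal Baireness of $\bar D_\UB$ via $\maxUB$, and definability in $L(\UB)$ of a witnessing map) from that equivalence together with the results quoted earlier in the paper.

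First I would unwind the definition of $\UB$-correctness into its constituent clauses and check each one is expressible in $(H_{\omega_1}\cup\UB,\in)$. The object $M$ is a countable transitive model, hence codable by a real $r$ with $M=\Cod_\omega(r)$, so quantifying over candidate $M$ is quantifying over $H_{\omega_1}$; the witness $\UB_M$ is a countable family of universally Baire sets, coded by the single universally Baire set $B_{\UB_M}=\prod\UB_M$, which is exactly an element over which $(H_{\omega_1}\cup\UB,\in)$ can quantify. The clauses then read as follows. Injectivity of $\Theta_M\colon A\mapsto A\cap M$ and the assertion that $\{A\cap M:A\in\UB_M\}$ is (in the sense of $M$) the family of universally Baire subsets of $M$ are arithmetic statements about the reals coding $M$ and the slices $B_n\cap M$, hence absolute. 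The clause $(H_{\omega_1}^M,\tau_\ST^M,A\cap M:A\in\UB_M)\models T_{\UB_M}$, where $T_{\UB_M}$ is the theory of $(H_{\omega_1},\tau_\ST^V,\UB_M)$, is the delicate one: it asserts agreement of two first-order theories, but I would note that by Thm.~\ref{thm:modcompanHomega1} the theory $T_{\UB_M}$ is model complete, and that model completeness is checkable by Robinson's test over the code $B_{\UB_M}$, so that $(H_{\omega_1}\cup\UB,\in)$ can verify the relevant $\Pi_1$ schema slice by slice against $\WFE_\omega\times B_{\UB_M}$; this is precisely the point flagged in the preceding different proof of Thm.~\ref{thm:modcompanHomega1}, where it is observed that being a model complete theory is a $\Delta_0$-property of the code. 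Finally the $A$-iterability clause quantifies over all countable iterations of $M$ and demands wellfoundedness together with $A\cap M_\beta=j_{\alpha\beta}(A\cap M_0)$; each iteration is a countable object, so this is a statement about reals and the universally Baire slices of the $A\in\UB_M$, again evaluable in $(H_{\omega_1}\cup\UB,\in)$.

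Once the biconditional is established, the consequences are routine. Proper computability of $D_\UB$ is just the biconditional with the witness existentially quantified. For universal Baireness of $\bar D_\UB$, the set $D_\UB$ is definable in $(H_{\omega_1}\cup\UB,\in)$ by an existential formula over $\UB$ (there exists a code $B_{\UB_M}$ such that \ldots), so $\bar D_\UB=\Cod_\omega^{-1}[D_\UB]\subseteq 2^\omega$ is a subset of the reals definable in $(H_{\omega_1}\cup\UB,\in)$; by clause (\ref{Keyprop:maxUB-1}) of $\maxUB$ (Def.~\ref{Keyprop:maxUB}) any such set is universally Baire, which is exactly the assertion that $S_{\phi_\UB}$ is universally Baire in the sense of Thm.~\ref{thm:keythmmodcompanHomega2}(1). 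For the definable map $M\mapsto\UB_M$ in $L(\UB)$, I would fix a definable wellordering (or a definable choice function) of the universally Baire codes inside $L(\UB)$ and, given $M\in D_\UB$, let $\UB_M$ be the family coded by the $<_{L(\UB)}$-least code $B$ witnessing $\UB$-correctness of $M$; since the witnessing relation is, by the biconditional, expressible by an $\in$-formula relativized to $L(\UB)$, this assignment is definable in $L(\UB)$. The identical argument, carrying the extra parameters $B_1,\dots,B_k$ and adding the conjunct $B_1,\dots,B_k\in\UB_M$ to the witnessing formula, handles $\bar E_{\UB,B_1,\dots,B_k}$.

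The main obstacle I expect is the faithful treatment of the model-completeness clause of $\UB$-correctness inside $(H_{\omega_1}\cup\UB,\in)$: one must be careful that the equality of theories $T_{\UB_M}$ computed in $V$ and the theory of $(H_{\omega_1}^M,\tau_\ST^M,A\cap M:A\in\UB_M)$ is genuinely decidable from the code, rather than requiring an external appeal to the full ambient structure. The resolution is to use that model completeness lets us reduce truth in $(H_{\omega_1}^V,\sigma_{\UB_M}^V)$ of an arbitrary formula to truth of a $\Pi_1$-formula over the single universally Baire parameter $B_{\UB_M}$ (equivalently $\WFE_\omega\times B_{\UB_M}$), and truth of $\Pi_1$ (indeed arbitrary) formulae over a universally Baire parameter is exactly what $(H_{\omega_1}\cup\UB,\in)$ evaluates correctly; this is the step that secures the whole equivalence, and it is the reason the previous subsection's reformulation of Thm.~\ref{thm:modcompanHomega1} was a necessary warm-up.
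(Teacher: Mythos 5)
Your treatment of the biconditional, of the proper computation of $D_{\UB}$, and of the universal Baireness of $\bar{D}_\UB$ via clause (\ref{Keyprop:maxUB-1}) of $\maxUB$ matches the paper's argument (the paper simply records that the witnessing assertion ``gets the same truth value in $(V,\in)$ and in $(H_{\omega_1}\cup\UB,\in)$''; your clause-by-clause unwinding is a more explicit version of the same step, and your detour through model completeness for the clause $(H_{\omega_1}^M,\tau_\ST^M,A\cap M:A\in\UB_M)\models T_{\UB_M}$ is harmless but unnecessary, since $H_{\omega_1}$ is a definable subclass of $H_{\omega_1}\cup\UB$ and each $A\in\UB_M$ is an element of that structure, so the full theory $T_{\UB_M}$ is evaluated there directly).

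There is, however, a genuine gap in your argument for the definable map $M\mapsto\UB_M$. You propose to select the $<_{L(\UB)}$-least code $B_{\UB_M}$ witnessing $\UB$-correctness, but no definable wellordering of the universally Baire codes exists in $L(\UB)$: the codes $B_{\UB_M}=\prod\UB_M$ are sets of reals, and under the standing hypothesis of class many Woodin cardinals $L(\UB)$ satisfies enough determinacy (all of its sets of reals are Lebesgue measurable, etc.) to preclude any wellordering of a family of this kind, exactly as in $L(\mathbb{R})$ under $\mathsf{AD}$. This is precisely why the paper does not argue by choice of a least witness: it instead invokes the scale-based \emph{uniformization} of universally Baire relations available from class many Woodin cardinals (citing \cite[Thm. 36.9]{kechris:descriptive} and \cite[Thm. 3.3.14, Thm. 3.3.19]{STATLARSON}) to uniformize the relation on $\bar{D}_\UB\times 2^\omega$ of pairs $\ap{r,B}$ with $B$ witnessing $\Cod_\omega(r)\in D_\UB$. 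Uniformization is the correct substitute for a definable choice function in this determinacy context, and your argument needs to be repaired by replacing the wellordering with it; the same repair carries over to the parametrized case of $\bar{E}_{\UB,B_1,\dots,B_k}$.
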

\begin{proof}
The first part follows almost immediately by the definitions, since the assertion in parameters $B,M$:
\begin{quote}
\emph{
$B=\prod_{n\in\omega}B_n$ codes a $H_{\omega_1}$-closed family 
$\UB_M=\bp{B_n:n\in\omega}$ of sets such that 
\begin{itemize}
\item $M$ is $A$-iterable for all $A\in \UB_M$,
\item $M$ models that $\bp{A\cap M: A\in \UB_M}$ is its family of universally 
Baire sets and is $H_{\omega_1}$-closed,
\item
$(H_{\omega_1}^M,\tau_{\ST}^M,\bp{A\cap M: A\in \mathsf{UB}_M})$ models
$T_{\UB_M}$.
\end{itemize}
}
\end{quote}
gets the same truth value in $(V,\in)$ and in $(H_{\omega_1}\cup \UB,\in)$.
%

We conclude that $D_{\mathsf{UB}}$ has the same extension in 
$(V,\in)$ and in $(H_{\omega_1}\cup\UB,\in)$.
By $\maxUB$ $\bar{D}_\UB$ is universally Baire.

The existence of class many Woodin cardinals grants
that we can always find\footnote{For example by \cite[Thm. 36.9]{kechris:descriptive} and \cite[Thm. 3.3.14, Thm. 3.3.19]{STATLARSON}.} 
a universally Baire uniformization of the 
universally Baire relation 
on $\bar{D}_\UB\times 2^\omega$ given by the pairs $\ap{r,B}$ such that $B=\prod\bp{B_n:n\in\omega}$ witnesses
$\Cod_\omega(r)\in D_\UB$ .

The same argument can be replicated for  $\bar{E}_{\UB,B_1,\dots,B_k}$.
\end{proof}


\begin{lemma}\label{lem:UBcorr}
Assume $\NS_{\omega_1}$ is precipitous and there are class many 
Woodin cardinals in $V$.
Let $\delta$ be an inaccessible cardinal in $V$ and $G$ be 
$V$-generic for $\Coll(\omega,\delta)$.
Then $V_\delta$ is $\UB^{V[G]}$-correct in $V[G]$ as witnessed by $\bp{B^{V[G]}:B\in \UB^V}$.
\end{lemma}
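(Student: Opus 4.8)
The plan is to verify the four bullet points in the definition of $\UB$-correctness for $M=V_\delta$ with the candidate witness family $\mathsf{UB}_M=\{B^{V[G]}:B\in\UB^V\}$, working inside $V[G]$. First I would record that $\Coll(\omega,\delta)$ makes $V_\delta$ countable in $V[G]$, since $\delta$ is collapsed to $\omega$, so the countability hypotheses (for $A$-iterability and for membership in $D_\UB$) are met. The \emph{injectivity} of $\Theta_M\colon A\mapsto A\cap V_\delta$ on $\mathsf{UB}_M$ should follow because distinct universally Baire sets $A,A'$ of $V[G]$ arising as $B^{V[G]},(B')^{V[G]}$ for $B\ne B'$ in $\UB^V$ differ already on reals coded below $\delta$: $V_\delta$ contains cofinally many reals witnessing the difference, using that $\delta$ is inaccessible in $V$ so $V_\delta\prec V$ for enough absoluteness and that the universally Baire interpretation commutes with the collapse.

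Next I would treat the two properties about universally Baire sets being correctly computed. The key external inputs are $\maxUB(\ref{Keyprop:maxUB-1})$ and the preservation Fact~\ref{fac:keyfacHomega1clos}. The plan is to use Fact~\ref{fac:keyfacHomega1clos} to transport $H_{\omega_1}$-closedness of $\UB^V$ in $V$ to $H_{\omega_1}$-closedness of $\{B^{V[G]}:B\in\UB^V\}$ in $V[G]$, which is precisely the closure property $\mathsf{UB}_M$ needs. To see that $(V_\delta,\in)$ computes its own universally Baire sets as $\{A\cap V_\delta:A\in\mathsf{UB}_M\}$, I would invoke that $\delta$ is a limit of Woodin cardinals in $V$ (from class many Woodins), so inside $V_\delta$ the universally Baire sets are exactly those captured by tree representations living below $\delta$; the collapse then identifies these with the restrictions $B^{V[G]}\cap V_\delta$. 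For the theory clause, the required statement is
\[
(H_{\omega_1}^{V_\delta},\tau_\ST^{V_\delta},A\cap V_\delta:A\in\mathsf{UB}_M)\models T_{\UB_M},
\]
and since $H_{\omega_1}^{V_\delta}=H_{\omega_1}^{V[G]}$ (the collapse adds all the new reals but $V_\delta$ already contains $H_{\omega_1}^{V[G]}$ after the collapse), this reduces to an elementarity statement which again follows from Fact~\ref{fac:keyfacHomega1clos}.

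The heart of the matter, and the step I expect to be the main obstacle, is the last clause: that $V_\delta$ is $A$-iterable for every $A\in\mathsf{UB}_M$. This requires $V_\delta\models\ZFC+\NS_{\omega_1}$ \emph{is precipitous} (which holds by reflection since $\delta$ is inaccessible and $\NS_{\omega_1}$ is precipitous in $V$, hence in $V_\delta$), and crucially that every iteration of $V_\delta$ is wellfounded with $A\cap M_\beta=j_{\alpha\beta}(A\cap M_0)$. The plan is to appeal to the iterability machinery of \cite{HSTLARSON} — specifically the criterion (as in \cite[Thm. 4.10]{HSTLARSON}, already invoked in the proof of Thm.~\ref{thm:PI1invomega2}) that having all proper initial segments present in a sufficiently closed model suffices for $A$-iterability. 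Here precipitousness of $\NS_{\omega_1}$ in $V_\delta$ guarantees that the generic ultrapowers are wellfounded, and the universally Baire representation of $A$ (via trees projecting to $A$ and its complement, which exist below $\delta$ using the Woodin cardinals) guarantees the commutation $A\cap M_\beta=j_{\alpha\beta}(A\cap M_0)$ is preserved along the iteration; this is exactly where $\maxUB(\ref{Keyprop:maxUB-1})$ is needed, as flagged in the surrounding text, to certify that the relevant definable sets are genuinely universally Baire and thus carry absolute tree representations. I would assemble these pieces into the iterability verification, being careful that the iteration takes place in $V[G]$ where $V_\delta$ is countable so that the countable-iterability results of \cite{HSTLARSON} literally apply.
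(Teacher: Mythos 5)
Your outline correctly isolates the clauses to be checked and, like the paper, obtains the $H_{\omega_1}$-closedness of $\bp{B^{V[G]}:B\in\UB^V}$ from Fact~\ref{fac:keyfacHomega1clos}. But two steps do not work as written. First, the claim $H_{\omega_1}^{V_\delta}=H_{\omega_1}^{V[G]}$ is false: $V_\delta$ is computed in $V$, so it contains none of the reals added by the collapse, and in $V[G]$ it is a countable set. The correct identity is $H_{\omega_1}^{V_\delta}=H_{\omega_1}^{V}$, and the theory clause is obtained from the elementarity $(H_{\omega_1}^V,\tau_\ST^V,\UB^V)\prec(H_{\omega_1}^{V[G]},\tau_\ST^{V[G]},A^{V[G]}:A\in\UB^V)$ supplied by \cite[Thm.~4.7]{VIATAMSTI}, together with the observation that $A^{V[G]}\cap V_\delta=A$. (Also, ``class many Woodin cardinals'' does not make an arbitrary inaccessible $\delta$ a limit of Woodin cardinals, so that parenthetical justification is unsupported.)

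Second, and more seriously, the iterability clause --- which you rightly identify as the heart of the matter --- is not established by your argument. Precipitousness of $\NS_{\omega_1}$ in $V_\delta$ gives wellfoundedness of a single generic ultrapower; it does not give wellfoundedness of arbitrary length-$\le\omega_1^{V[G]}$ iterations whose generics are chosen in $V[G]$, and the hypotheses of \cite[Thm.~4.10]{HSTLARSON} (which the paper applies to structures sitting inside a ${<}\gamma$-closed stationary tower ultrapower, a different configuration) are nowhere verified here. The paper's proof instead extends any iteration $\mathcal{J}$ of $V_\delta$ living in $V[G]$ to an iteration $\bar{\mathcal{J}}$ of the full universe $V$ via \cite[Lemma~1.5, Lemma~1.6]{HSTLARSON}; wellfoundedness is then automatic, and for each $B\in\UB^V$ the required agreement $\bar{j}_{0\alpha}(B)=B^{V[G]}\cap\bar{j}_{0\alpha}(V)$ is deduced by pushing a pair of trees $(T_B,S_B)$, fixed in $V$ so as to project to complements in $V[G]$, through the elementary map $\bar{j}_{0\alpha}$ and combining the inclusions $B^{V[G]}\cap\bar{j}_{0\alpha}(V)\subseteq p[[\bar{j}_{0\alpha}(T_B)]]$ and $((2^\omega)^{V[G]}\setminus B^{V[G]})\cap\bar{j}_{0\alpha}(V)\subseteq p[[\bar{j}_{0\alpha}(S_B)]]$ with the identity $\bar{j}_{0\alpha}(B)\cup\bar{j}_{0\alpha}((2^\omega)^V\setminus B)=(2^\omega)\cap\bar{j}_{0\alpha}(V)$. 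You do invoke tree representations for the commutation, but without first extending the iteration to one of $V$ there is no elementary map defined on the trees (which live in $V$ at rank above $\delta$, not in $V_\delta$), so this step remains a genuine gap.
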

\begin{proof}
Let in $V$ 
$\bp{(T_A,S_A): A\in\UB^V}$ be an enumeration of pairs of trees $S_A,U_A$ on $\omega\times\gamma$ for 
a large enough inaccessible $\gamma>\delta$
such that $T_A,S_A$ projects to complements in $V[G]$ and $A$ is the projection of $T$.
Then $A^{V[G]}$ is correctly computed as the projection of $T_A$ in $V[G]$ for any $A\in\UB^V$.
By Fact~\ref{fac:keyfacHomega1clos} and \cite[Thm. 4.7]{VIATAMSTI} 
\[
(H_{\omega_1}^V, \tau_{\mathsf{ST}}^V,\mathsf{UB}^V)\prec (H_{\omega_1}^{V[G]}, \tau_{\mathsf{ST}}^{V[G]},A^{V[G]}: 
A\in\mathsf{UB}^{V}),
\]
$\bp{A^{V[G]}: A\in\mathsf{UB}^{V}}$ is a $H_{\omega_1}$-closed family of 
universally Baire sets in $V[G]$, and 
$T_{\UB^V}$ is also the theory of $(H_{\omega_1}^{V[G]}, \tau_{\mathsf{ST}}^{V[G]},A^{V[G]}: 
A\in\mathsf{UB}^{V})$.

To conclude that $\bp{A^{V[G]}: A\in\mathsf{UB}^{V}}$ witnesses in $V[G]$ that $V_\delta$ is $\UB^{V[G]}$-correct in 
$V[G]$ it remains to argue that 
$V_\delta$ is $B^{V[G]}$-iterable for any $B\in\UB^V$.

Let $\mathcal{J}$ be any iteration of $V_\delta$ in $V[G]$.
%
%
%
%
%
Then  by standard results on iterations
(see \cite[Lemma 1.5, Lemma 1.6]{HSTLARSON})
$\mathcal{J}$ extends uniquely to an iteration $\bar{\mathcal{J}}$ of $V$ in $V[G]$ such that
\begin{itemize}
\item
$\bar{j}_{\alpha\beta}$ is a proper extension of $j_{\alpha\beta}$ for all $\alpha\leq\beta\leq\gamma$
(i.e. letting $\bar{V}_\alpha=\bar{j}_{0\alpha}(V)$, we have that
$j_{0\alpha}(V_\delta)$ is the rank initial segments of elements of $\bar{V}_\alpha$ of rank less than
$\bar{j}_{0\alpha}(\delta)$).
\item
$\bar{\mathcal{J}}$ is a well defined iteration of transitive structures. 
\end{itemize}
In particular this shows that $V_\delta$ is iterable in $V[G]$.

Now fix $B\in \UB^V$. We must argue that $j_{0\alpha}(B)=B^{V[G]}\cap \bar{j}_{0\alpha}(V)$.
To simplfy notation we assume $B\subseteq 2^\omega$.
Let $(T_B,S_B)$ be the pair of trees selected in $V$ to define $B^{V[G]}$.

Then  
\[
\bar{j}_{0\alpha}(V)\models (\bar{j}_{0\alpha}(T_B), \bar{j}_{0\alpha}(S_B))
\]
projects to complements; clearly $\bar{j}_{0\alpha}[T_B]\subseteq \bar{j}_{0\alpha}(T_B)$,
$\bar{j}_{0\alpha}[S_B]\subseteq \bar{j}_{0\alpha}(S_B)$.
Let $p:(\gamma\times 2)^{\omega}\to 2^\omega$ be the projection map. 

This gives that
\[
B^{V[G]}\cap \bar{j}_{0\alpha}(V)=p[[T_B]]\cap \bar{j}_{0\alpha}(V)=p[[\bar{j}_{0\alpha}[T_B]]]\cap \bar{j}_{0\alpha}(V)\subseteq
p[[\bar{j}_{0\alpha}(T_B)]]\cap \bar{j}_{0\alpha}(V)=\bar{j}_{0\alpha}(B).
\]
Similarly
\[
((2^{\omega})^{V[G]}\setminus B^{V[G]})\cap \bar{j}_{0\alpha}(V)=p[[S_B]]\cap \bar{j}_{0\alpha}(V)\subseteq
p[[\bar{j}_{0\alpha}(S_B)]]\cap \bar{j}_{0\alpha}(V)=\bar{j}_{0\alpha}((2^\omega)^V\setminus B).
\]
By elementarity
\[
\bar{j}_{0\alpha}((2^\omega)^V\setminus B)\cup \bar{j}_{0\alpha}(B)=(2^\omega)\cap \bar{j}_{0\alpha}(V).
\]
These three conditions can be met only if
\[
B^{V[G]}\cap \bar{j}_{0\alpha}(V)=\bar{j}_{0\alpha}(B).
\]

Since $\mathcal{J}$ and $B$ were chosen arbitrarily,
we conclude that $V_\delta$ is $B^{V[G]}$-iterable in $V[G]$ for all $B\in\UB^V$.

Hence $V_\delta$ is $\UB^{V[G]}$-correct in $V[G]$ as witnessed by $\bp{A^{V[G]}: A\in \UB^V}$.
\end{proof}

\begin{definition}
Given $M,N$ iterable structures, 
$M\geq N$ if $M\in (H_{\omega_1})^N$ and there is  an iteration
\[
\mathcal{J}=\bp{j_{\alpha\beta}:\,\alpha\leq\beta\leq\gamma=(\omega_1)^N}
\]
of $M$ with $\mathcal{J}\in N$
such that
\[
\NS_{\gamma}^{M_\gamma}=
\NS_{\gamma}^N\cap M_{\gamma}.
\]
\end{definition}

\begin{fact}\label{fac:densityUBcorrect}
$(\maxUB)$
Assume $\NS_{\omega_1}$ is precipitous and $\maxUB$ holds.
Then for any iterable $M$ and $B_1,\dots,B_k\in \UB$, there is an $\UB$-correct $N\geq M$ with $B_1,\dots,B_k\in \UB_N$.
\end{fact}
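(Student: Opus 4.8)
The plan is to show that the existence of the desired $N$ is a first-order property of the structure $(H_{\omega_1}\cup\UB,\in)$ in the parameters $M,B_1,\dots,B_k$, verify this property in a suitable collapse extension of $V$ by appeal to Lemma~\ref{lem:UBcorr}, and then pull it back to $V$ using the elementarity built into $\maxUB$. Concretely, let $\Psi(M,B_1,\dots,B_k)$ be the assertion: \emph{there is a countable $\UB$-correct structure $N$, witnessed by a family $\UB_N$ with $B_1,\dots,B_k\in\UB_N$, such that $N\geq M$} --- that is, $M\in(H_{\omega_1})^N$ and, setting $\gamma=(\omega_1)^N$, there is an iteration $\mathcal{J}=\bp{j_{\alpha\beta}:\alpha\leq\beta\leq\gamma}\in N$ of $M$ with $\NS_{\omega_1}^{M_\gamma}=\NS_{\omega_1}^N\cap M_\gamma$. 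Since $N$, $M$ and $\mathcal{J}$ are hereditarily countable, $\UB_N$ is coded by a single universally Baire set, and being $\UB$-correct is correctly computed in $(H_{\omega_1}\cup\UB,\in)$ by the Fact preceding Lemma~\ref{lem:UBcorr}, the statement $\Psi$ is expressible by a formula of $(H_{\omega_1}\cup\UB,\in)$ with the stated parameters. Checking this carefully (so that $\maxUB$(\ref{Keyprop:maxUB-2}) will apply) is a point that requires attention.

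First I would fix an inaccessible $\delta$ with $M\in V_\delta$ (such $\delta$ exist since $\maxUB$ provides class many Woodin cardinals), and let $G$ be $V$-generic for $\Coll(\omega,\delta)$. By Lemma~\ref{lem:UBcorr}, $V_\delta$ is $\UB^{V[G]}$-correct in $V[G]$, witnessed by $\bp{B^{V[G]}:B\in\UB^V}$; in particular $B_1^{V[G]},\dots,B_k^{V[G]}$ all lie in this witnessing family, and $V_\delta$ is countable in $V[G]$ with $(\omega_1)^{V_\delta}=\omega_1^V$. I would then verify $\Psi(M,B_1^{V[G]},\dots,B_k^{V[G]})$ in $V[G]$ with witness $N=V_\delta$. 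Because $\NS_{\omega_1}$ is precipitous in $V$ and $\delta$ is inaccessible, $V_\delta$ models $\ZFC+\NS_{\omega_1}$\emph{ is precipitous}, and $M$ is a countable iterable structure inside $V_\delta$ (its iterations in $V_\delta$ are iterations in $V$, hence well founded). Applying \cite[Lemma 2.8]{HSTLARSON} inside $V_\delta$ then yields an iteration $\mathcal{J}\in V_\delta$ of $M$ of length $\gamma=(\omega_1)^{V_\delta}$ with $\NS_{\omega_1}^{M_\gamma}=\NS_{\omega_1}^{V_\delta}\cap M_\gamma$; this is exactly the relation $V_\delta\geq M$, so $N=V_\delta$ witnesses $\Psi$ in $V[G]$.

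Finally, applying $\maxUB$(\ref{Keyprop:maxUB-2}) with trivial first generic and $G$ as the collapse generic gives
\[
(H_{\omega_1}^V\cup\UB^V,\in)\prec(H_{\omega_1}^{V[G]}\cup\UB^{V[G]},\in),
\]
the embedding being the identity on $H_{\omega_1}^V$ and $A\mapsto A^{V[G]}$ on $\UB^V$. Under this embedding the parameters $M$ and $B_i$ of $\Psi$ correspond to $M$ and $B_i^{V[G]}$, so by elementarity $\Psi(M,B_1,\dots,B_k)$ holds in $V$, producing the required $\UB$-correct $N\geq M$ with $B_1,\dots,B_k\in\UB_N$. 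I expect the main obstacle to be the middle step: guaranteeing that the generic iteration of $M$ can be carried out inside $V_\delta$ all the way to length $(\omega_1)^{V_\delta}$ while matching the nonstationary ideal of $V_\delta$, which rests on the precipitousness of $\NS_{\omega_1}$ reflecting to $V_\delta$ together with the iteration machinery of \cite{HSTLARSON}.
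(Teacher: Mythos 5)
Your proposal is correct and follows essentially the same route as the paper: use Lemma~\ref{lem:UBcorr} to see that $V_\delta$ is $\UB^{V[G]}$-correct in the collapse extension, produce the witnessing iteration of $M$ via \cite[Lemma 2.8]{HSTLARSON}, and pull the existential statement back to $V$ using the elementarity supplied by $\maxUB$ together with the fact that $\UB$-correctness (i.e.\ membership in $\bar{E}_{\UB,B_1,\dots,B_k}$) is correctly computed in $(H_{\omega_1}\cup\UB,\in)$ and lifts to $V[G]$. The only cosmetic difference is that you apply Larson's lemma inside $V_\delta$ while the paper applies it in $V$ and observes the resulting iteration lies in $V_\delta$; both are fine.
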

\begin{proof}
The assumptions grant that whenever $G$ is $\Coll(\omega,\delta)$-generic for $V$,
in $V[G]$ $V_\delta$ is $\UB^{V[G]}$-correct in $V[G]$ (i.e. Lemma \ref{lem:UBcorr}).

By \cite[Lemma 2.8]{HSTLARSON}, for any iterable $M\in H_{\omega_1}^V$ there is in $V$ an iteration
 $\mathcal{J}=\bp{j_{\alpha\beta}:\alpha\leq\beta\leq \omega_1^V}$ of $M$ such that
 $\NS_{\omega_1}^V\cap M_{\omega_1}=\NS_{\omega_1}^{M_{\omega_1}}$.
 
By $\maxUB$
\[
(H_{\omega_1}^{V}\cup\UB^{V},\in) \prec 
(H_{\omega_1}^{V[G]}\cup\UB^{V[G]},\in).
\]
 Therefore  we have that in $V[G]$ $\bar{E}_{\UB,B_1,\dots,B_k}^{V[G]}$ is exactly 
 $\bar{E}_{\UB,B_1^{V[G]},\dots,B_k^{V[G]}}$.
 
Hence for each iterable $M\in H_{\omega_1}^V$ and $B\in \UB^V$
 \[
 (H_{\omega_1}^{V[G]},\sigma_{\UB^V}^{V[G]})\models\exists \,N\geq M\text{ $\UB^{V[G]}$-correct with $B^{V[G]}$ in $\UB_N$},
 \]
 as witnessed by $N=V_\delta$, i.e. 
 \[
 (H_{\omega_1}^{V[G]},\sigma_{\UB^V}^{V[G]})\models\exists \,N\geq M \,(\bar{E}_{\UB,B_1,\dots,B_k}^{V[G]}(N)).
 \]

Since
  \[
 (H_{\omega_1}^{V},\sigma_{\UB^V}^{V})\prec (H_{\omega_1}^{V[G]},\sigma_{\UB^V}^{V[G]}),
 \]
we get that for every iterable $M\in H_{\omega_1}$ and $B\in \UB^V$
 \[
 (H_{\omega_1}^{V},\sigma_{\UB^V}^{V})\models\exists \,N\geq M\,(\bar{E}_{\UB,B_1,\dots,B_k}(N)).
 \]
 The conclusion follows.
\end{proof}

\begin{lemma}
$(\maxUB)$

Let $M\geq N$ be both $\UB$-correct structures, with
$\UB_N$ a witness of $N$ being $\UB$-correct  such that $\bar{D}_{\mathsf{UB}}\in\UB_N$.
Then
\[
(H_{\omega_1}^{M},\tau_{\mathsf{ST}}^M,A\cap M: A\in\UB_M)\prec
(H_{\omega_1}^{N},\tau_{\mathsf{ST}}^N,A\cap N: A\in\UB_M).
\]
\end{lemma}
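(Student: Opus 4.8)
The plan is to prove the stated $\Sigma_1$-elementarity between the $H_{\omega_1}$'s of two $\UB$-correct structures $M \geq N$ by exploiting the iteration witnessing $M \geq N$ together with the $A$-iterability built into $\UB$-correctness, and then upgrading $\Sigma_1$-elementarity to full elementarity via model completeness. First I would unpack the hypothesis $M \geq N$: there is an iteration $\mathcal{J}=\bp{j_{\alpha\beta}:\alpha\leq\beta\leq\gamma=\omega_1^N}$ of $M$ lying in $N$ such that $\NS_\gamma^{M_\gamma}=\NS_\gamma^N\cap M_\gamma$, and $M\in (H_{\omega_1})^N$. The key fact I would use is that, because $M$ is $A$-iterable for every $A\in\UB_M$, this iteration is $A$-correct in the sense that $A\cap M_\gamma = j_{0\gamma}(A\cap M_0)$ for all $A\in\UB_M$. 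Combined with $\UB_M \subseteq \UB_N$ (or at least the relevant predicates being shared, which I would need to extract from $\bar{D}_{\UB}\in\UB_N$ and the definition of witness), this says the final model $M_\gamma$ of the iteration sits correctly inside $N$ with respect to all the universally Baire predicates in play.

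The heart of the argument is then a standard generic-iteration/absoluteness computation. Since $j_{0\gamma}: M \to M_\gamma$ is elementary for the $\sigma_{\UB_M,\NS_{\omega_1}}$-structure (by $A$-iterability the universally Baire predicates are preserved, and $\NS$ is preserved by the $\NS_\gamma^{M_\gamma}=\NS_\gamma^N\cap M_\gamma$ condition), it suffices to show that $(H_{\omega_1}^{M_\gamma}, \tau_\ST, A\cap M_\gamma : A\in\UB_M) \prec_1 (H_{\omega_1}^N, \tau_\ST, A\cap N : A\in\UB_M)$ — that is, that the \emph{image} structure is $\Sigma_1$-elementary in $N$. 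This should follow from Levy absoluteness applied inside $N$: $H_{\omega_1}^{M_\gamma}$ is a transitive substructure of $H_{\omega_1}^N$ closed enough that existential witnesses reflect. The role of $\bar{D}_{\UB}\in\UB_N$ is precisely that $N$ correctly computes which countable structures are $\UB$-correct, so that $N$ can \emph{see} the iteration and verify the correctness of the predicates; this is what lets an existential statement true in $N$ be pulled back to a witness that $M_\gamma$ (hence $M$) recognizes. I would then compose: $M \xrightarrow{j_{0\gamma}} M_\gamma \hookrightarrow N$, with the first map fully elementary and the second $\Sigma_1$-elementary, yielding $\Sigma_1$-elementarity of $M$ into $N$.

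Finally, to get \emph{full} elementarity (the statement asserts $\prec$, not merely $\prec_1$) I would invoke model completeness: by the remark following the definition of $\UB$-correctness, $T_{\UB_M}$ is model complete (this is exactly Thm.~\ref{thm:modcompanHomega1}, applicable since $\UB_M$ is $H_{\omega_1}$-closed in $V$). Both $(H_{\omega_1}^M,\dots)$ and $(H_{\omega_1}^N,\dots)$ are models of $T_{\UB_M}$, and a $\Sigma_1$-elementary embedding between two models of a model complete theory is automatically fully elementary (by the standard characterization, every formula is $T_{\UB_M}$-equivalent to an existential one, and $\Sigma_1$-elementarity preserves existential formulas in both directions when the target theory is model complete). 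This last step converts the $\Sigma_1$-elementarity into the desired $\prec$.

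I expect the main obstacle to be the careful bookkeeping in the middle step: verifying that the universally Baire predicates are genuinely preserved along the iteration and that the predicate $A\cap M$ in the statement matches $j_{0\gamma}(A\cap M)\cap$(the copy of $M_\gamma$ inside $N$). The subtlety is that the witnessing family $\UB_N$ for $N$ and $\UB_M$ for $M$ need not be literally equal, so I would have to argue that the statement's use of $A\in\UB_M$ on \emph{both} sides is legitimate — i.e. that each $A\in\UB_M$ gives a well-defined predicate $A\cap N$ on $H_{\omega_1}^N$, which requires $\UB_M$ to be available to $N$. This is where $\bar{D}_{\UB}\in\UB_N$ and the definable assignment $M\mapsto\UB_M$ from the preceding Fact must be used to guarantee that $N$ has access to the correct codes, so that the comparison of predicates is coherent across the two structures.
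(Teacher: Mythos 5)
There is a genuine gap, and the overall route is more complicated than what the lemma actually requires. The paper's proof never uses the iteration witnessing $M\geq N$ at all: the only consequence of $M\geq N$ it needs is $M\in (H_{\omega_1})^N$, which already makes $(H_{\omega_1}^{M},\tau_{\mathsf{ST}}^M,A\cap M: A\in\UB_M)$ a \emph{substructure} of $(H_{\omega_1}^{N},\tau_{\mathsf{ST}}^N,A\cap N: A\in\UB_M)$, since $A\cap M=(A\cap N)\cap M$. Because $T_{\UB_M}$ is model complete, substructure plus the fact that \emph{both} structures model $T_{\UB_M}$ immediately yields $\prec$; no preliminary $\Sigma_1$-elementarity is needed. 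Your middle step --- factoring through the iterate $M_\gamma$ and claiming $(H_{\omega_1}^{M_\gamma},\dots)\prec_1(H_{\omega_1}^N,\dots)$ ``by Levy absoluteness applied inside $N$'' --- is not justified: Levy absoluteness relates $H_{\omega_1}^N$ to $N$, not the $H_{\omega_1}$ of an internal iterate to $H_{\omega_1}^N$, and there is no reason for that inclusion to reflect existential witnesses without further correctness assumptions. Fortunately that step can simply be deleted.

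The real content of the lemma, which you defer as ``bookkeeping,'' is precisely the verification that $(H_{\omega_1}^{N},\tau_{\mathsf{ST}}^N,A\cap N: A\in\UB_M)$ is a model of $T_{\UB_M}$; this requires $\UB_M\subseteq\UB_N$ for a suitable witness $\UB_M$, and you assert this parenthetically without proving it. The paper's argument is: since $\bar{D}_{\UB}\in\UB_N$ and $(H_{\omega_1}^N,\sigma_{\UB_N}^N)\prec(H_{\omega_1}^V,\sigma_{\UB_N}^V)$ (by $\UB$-correctness of $N$), the structure $H_{\omega_1}^N$ correctly computes that $M\in D_{\UB}$; hence $N$ contains a witness $\UB_M^N$ coded by a set of the form $B\cap N$ with $B\in\UB_N$; pushing this back up through the elementarity, $B$ codes in $V$ an actual witness $\UB_M$ that $M$ is $\UB$-correct, with $\UB_M\subseteq\UB_N$. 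Only then does the reduct of $(H_{\omega_1}^N,\sigma_{\UB_N}^N)$ to the $\UB_M$-predicates model $T_{\UB_M}$, and model completeness closes the argument. Note also that this shows the witness $\UB_M$ in the conclusion is the one \emph{produced} by this argument, not an arbitrary witness fixed in advance; your proposal treats $\UB_M$ as given, which elides this point.
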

\begin{proof}
Since $N\leq M$, and $N$ is $\UB$-correct with $\bar{D}_{\mathsf{UB}}\in\UB_N$
we get that 
\[
(H_{\omega_1}^N,\sigma_{\UB_N}^N)\models M\in D_{\mathsf{UB}}\cap N=\Cod[\bar{D}_{\mathsf{UB}}\cap N],
\]
since
\[
(H_{\omega_1}^N,\sigma_{\UB_N}^N)\prec (H_{\omega_1}^V,\sigma_{\UB_N}^V)
\]
and
\[
(H_{\omega_1}^V,\sigma_{\UB_N}^V)\models M\in D_{\mathsf{UB}}=\Cod[\bar{D}_{\mathsf{UB}}].
\]

Therefore $N$ models that there is a countable set $\UB_M^N=\bp{B_n^N:n\in\omega}\in N$ 
coded by the universally Baire set in $N$ $B_{\UB_M}^N=\prod_{n\in\omega}B_n^N$ such that 
$\bp{A\cap M:A\in \UB_M^N}\in M$ defines the family of universally Baire sets according to $M$, and such that $N$ models that
$M$ is $B^N$ iterable for all $B^N\in\UB_M^N$.
Now $N$ models that 
\[
\prod_{n\in\omega}B_n^N
\]
is a universally Baire set on the appropriate product space. 
Therefore there is $B\in \UB_N$ such that
$B\cap N=\prod_{n\in\omega}B_n^N$.
Clearly $\UB_M^N$ is computable from $B\cap N$.
Since
\[
(H_{\omega_1}^N,\sigma_{\UB_N}^N)\prec (H_{\omega_1}^V,\sigma_{\UB_N}^V).
\]
we conclude that in $V$ $B=\prod_{n\in\omega}B_n$ codes a set $\UB_M=\bp{B_n:n\in\omega}$ 
witnessing that $M$ is $\UB$-correct.

This gives that $\UB_M\subseteq\UB_N$.

Therefore 
$(H_{\omega_1}^N,\sigma_{\UB_M}^N)$
is also a model of
$T_{\UB_M}$.
By model completeness of $T_{\UB_M}$ we conclude that
\[
(H_{\omega_1}^M,\sigma_{\UB_M}^M)\prec (H_{\omega_1}^N,\sigma_{\UB_M}^N),
\]
as was to be shown.
\end{proof}

\subsection{Three characterizations of $(*)$-$\UB$}


\begin{definition}
For a $\UB$-correct $M$ with witness $\UB_M$, 
$T_{\NS_{\omega_1},\UB_M}$ is the 
$\sigma_{\UB_M,\NS_{\omega_1}}$-theory of $H_{\omega_2}^M$.

\smallskip

A $\UB$-correct $M$ is \emph{$(\NS_{\omega_1},\UB)$-ec}
if $(M,\in)$ models that $\NS_{\omega_1}$ is precipitous and 
there is a witness $\UB_M$ that $M$ is $\UB$-correct with the following property:   

\begin{quote}
Assume an iterable $N\geq M$ is $\UB$-correct with witness $\UB_N$ such that 
$B_{\UB_M}\in \UB_N$ (so that $\UB_M\subseteq \UB_N$).

Then for all iterations
\[
\mathcal{J}=\bp{j_{\alpha\beta}:\alpha\leq\beta\leq\gamma=\omega_1^N}
\] 
in $N$ witnessing
$M\geq N$, we have that $j_{0\gamma}$ defines a $\Sigma_1$-elementary embedding of
\[
(H_{\omega_2}^{M},\tau_{\mathsf{ST}}^{M},B\cap M: B\in\UB_M,\NS_{\omega_1}^{M})
\]
into 
\[
(H_{\omega_2}^N,\tau_{\mathsf{ST}}^N,B\cap N: B\in\UB_M,\NS_{\omega_1}^N).
\]
\end{quote}
\end{definition}

\begin{remark}\label{rmk:maxUBec}

A crucial observation is that
\emph{``$x$ is $(\NS_{\omega_1},\UB)$-ec''}
is a property correctly definable in $(H_{\omega_1}\cup\UB,\in)$.
Therefore (assuming $\maxUB$)
\[
D_{\NS_{\omega_1},\UB}=\bp{M\in H_{\omega_1}: \, M\text{ is $(\NS_{\omega_1},\UB)$-ec}}
\]
is such that $\bar{D}_{\NS_{\omega_1},\UB}=\Cod_\omega^{-1}[D_{\NS_{\omega_1},\UB}]$ 
is a universally Baire set in $V$.
Moreover letting for $V[G]$ a generic extension of $V$
\[
D_{\NS_{\omega_1},\UB^{V[G]}}=\bp{M\in H_{\omega_1}^{V[G]}: \, M\text{ is $(\NS_{\omega_1},\UB^{V[G]})$-ec}},
\]
we have that 
\[
\bar{D}_{\NS_{\omega_1},\UB}^{V[G]}=\Cod_\omega^{-1}[D_{\NS_{\omega_1},\UB^{V[G]}}].
\]
\end{remark}

\begin{theorem}\label{thm:char(*)}
Assume $V$ models $\maxUB$.
The following are equivalent:
\begin{enumerate}
\item\label{thm:char(*)-1}
Woodin's axiom $(*)$-$\mathsf{UB}$ holds
(i.e. 
$\NS_{\omega_1}$ is saturated,
and 
there is an $L(\mathsf{UB})$-generic filter $G$ for $\mathbb{P}_{\mathrm{max}}$
such that $L(\mathsf{UB})[G]\supseteq\pow{\omega_1}^V$).
\item\label{thm:char(*)-2}
Let $\delta$ be inaccessible.
Whenever $G$ is $V$-generic for $\Coll(\omega,\delta)$,
$V_\delta$ is $(\NS_{\omega_1},\UB^{V[G]})$-ec in $V[G]$.

\item \label{thm:char(*)-3}
$\NS_{\omega_1}$ is precipitous and
for all $\vec{A}\in H_{\omega_2}$, $B\in\UB$, there is an $(\NS_{\omega_1},\UB)$-ec $M$
with witness $\UB_M$,
and an iteration $\mathcal{J}=\bp{j_{\alpha\beta}:\,\alpha\leq\beta\leq\omega_1}$ of $M$ 
such that:
\begin{itemize}
\item $A\in M_{\omega_1}$,
\item $B\in\UB_M$,
\item $\NS_{\omega_1}^{M_{\omega_1}}=\NS_{\omega_1}\cap M_{\omega_1}$.
\end{itemize}
\end{enumerate}
\end{theorem}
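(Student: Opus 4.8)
The plan is to prove the theorem as a cycle of implications
\[
(\ref{thm:char(*)-1})\Rightarrow(\ref{thm:char(*)-2})\Rightarrow(\ref{thm:char(*)-3})\Rightarrow(\ref{thm:char(*)-1}),
\]
exploiting at each corner the fact that $\maxUB$ lets us transfer statements freely between $V$ and its $\Coll(\omega,\delta)$-extensions, and the fact that all the relevant predicates ($\bar D_\UB$, $\bar D_{\NS_{\omega_1},\UB}$, being $\UB$-correct, being $(\NS_{\omega_1},\UB)$-ec) are correctly computed in $(H_{\omega_1}\cup\UB,\in)$, hence universally Baire by Remark~\ref{rmk:maxUBec}. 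The whole argument runs parallel to the sketch already given for $H_{\omega_1}$, but now with $\bar D_{\NS_{\omega_1},\UB}$ in place of $\bar D_\UB$ and with $H_{\omega_2}$ in place of $H_{\omega_1}$.

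First, for $(\ref{thm:char(*)-1})\Rightarrow(\ref{thm:char(*)-2})$, I would assume $(*)$-$\UB$ and fix inaccessible $\delta$ and $G$ generic for $\Coll(\omega,\delta)$. I must show $V_\delta$ is $(\NS_{\omega_1},\UB^{V[G]})$-ec in $V[G]$. By Lemma~\ref{lem:UBcorr}, $V_\delta$ is already $\UB^{V[G]}$-correct in $V[G]$ with witness $\{B^{V[G]}:B\in\UB^V\}$; moreover in $V[G]$ we have $\NS_{\omega_1}$ precipitous, indeed $V_\delta$ inherits precipitousness. The content to verify is the $\Sigma_1$-elementarity clause: for $N\geq V_\delta$ a $\UB^{V[G]}$-correct structure and any iteration $\mathcal J$ in $N$ witnessing $V_\delta\geq N$ with $\NS$-correctness, the map $j_{0\gamma}$ is $\Sigma_1$-elementary from $H_{\omega_2}^{V_\delta}$ into $H_{\omega_2}^N$ for $\sigma_{\UB_M,\NS_{\omega_1}}$. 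This is exactly where the hypothesis $(*)$-$\UB$ is converted, via the $\Pmax$-machinery of \cite{HSTLARSON}, into the existence of the required iterations realizing prescribed targets; I expect to invoke the correctness and genericity of the $\Pmax$-filter together with $\maxUB$-property~(\ref{Keyprop:maxUB-2}) to push $\Sigma_1$-statements upward. This is the heart of the proof and I expect it to be the main obstacle, since it is the one place (as the authors flag) where ideas beyond \cite{STATLARSON,HSTLARSON} enter, inspired by \cite[Lemma 3.2]{ASPSCH(*)}.

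Next, $(\ref{thm:char(*)-2})\Rightarrow(\ref{thm:char(*)-3})$ is a reflection argument of exactly the kind carried out in Fact~\ref{fac:densityUBcorrect}. Assuming (\ref{thm:char(*)-2}), fix $\vec A\in H_{\omega_2}$ and $B\in\UB$. Work in $V[G]$ for $G$ generic over $\Coll(\omega,\delta)$: there $V_\delta$ is $(\NS_{\omega_1},\UB^{V[G]})$-ec, and by \cite[Lemma 2.8]{HSTLARSON} one produces an iteration of a suitable countable iterable $M$ sending a preimage of $\vec A$ to $\vec A$ with $\NS_{\omega_1}^{M_{\omega_1}}=\NS_{\omega_1}\cap M_{\omega_1}$. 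The existential assertion ``there is an $(\NS_{\omega_1},\UB)$-ec $M$ and an iteration hitting $\vec A$ with $B\in\UB_M$'' is a $\Sigma_1$-statement over $(H_{\omega_1}^{V[G]},\sigma_{\UB^V}^{V[G]})$ using the universally Baire parameter $\bar D_{\NS_{\omega_1},\UB}$, so by the elementarity
\[
(H_{\omega_1}^{V},\sigma_{\UB^V}^{V})\prec (H_{\omega_1}^{V[G]},\sigma_{\UB^V}^{V[G]})
\]
it reflects down to $V$, giving (\ref{thm:char(*)-3}); precipitousness of $\NS_{\omega_1}$ in $V$ is immediate from (\ref{thm:char(*)-2}).

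Finally, for $(\ref{thm:char(*)-3})\Rightarrow(\ref{thm:char(*)-1})$ I would use the density of iterable $(\NS_{\omega_1},\UB)$-ec structures hitting arbitrary $\vec A\in H_{\omega_2}$, guaranteed by (\ref{thm:char(*)-3}), as precisely the dense-set condition defining an $L(\UB)$-generic $\Pmax$-filter. Since $\maxUB$ makes all sets of reals definable in $L(\UB)$ universally Baire, the dense subsets of $\Pmax$ lying in $L(\UB)$ are captured by the family $D_{\NS_{\omega_1},\UB}$, and the iterations provided by (\ref{thm:char(*)-3}) assemble (via the standard $\Pmax$-analysis of \cite{HSTLARSON}, \cite[Def. 4.1]{HSTLARSON} and the surrounding preconditions) into a filter $G$ with $\pow{\omega_1}^V\subseteq L(\UB)[G]$; that $\NS_{\omega_1}$ is saturated follows from the saturation machinery once precipitousness plus the full density is available. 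The routine but essential check here is that the map $M\mapsto\UB_M$ chosen in $L(\UB)$ (Fact above) makes the whole construction internal to $L(\UB)$, so that the resulting filter is genuinely $L(\UB)$-generic rather than merely $V$-generic.
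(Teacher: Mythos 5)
There is a genuine gap: your outline never identifies the tool that actually drives the paper's proof, namely the Asper\'o--Schindler characterization of $(*)$-$\UB$ in terms of $\UB$-\emph{honest consistency} of $\Sigma_1$-formulas over $H_{\omega_2}$ (\cite[Def.~2.1, Thm.~2.7, Thm.~2.8]{SCHASPBMM*++}: under $\maxUB$ and precipitousness, $(*)$-$\UB$ holds iff every honestly consistent $\Sigma_1$-sentence with parameters in $H_{\omega_2}^V$ is true in $H_{\omega_2}^V$). Both nontrivial corners of the cycle pass through this equivalence. For (\ref{thm:char(*)-1})$\Rightarrow$(\ref{thm:char(*)-2}) you explicitly defer the $\Sigma_1$-elementarity clause as ``the main obstacle'' and gesture at ``the $\Pmax$-machinery'' and $\maxUB$(\ref{Keyprop:maxUB-2}); but the actual argument does not manipulate a $\Pmax$-filter at all: given $N\leq V_\delta$ and a witnessing iteration, one extends it to an iteration $\bar{\jmath}_{0\gamma}$ of all of $V$, uses the tree representations of the $B\in\UB^V$ to check that $\bar{j}_{0\gamma}(B)$ is correctly recomputed in $\bar{M}[H]$, invokes model completeness of $T_{\UB^V}$ and homogeneity of $\Coll(\omega,\eta)$ to pull the existence of the iterable superstructure back into $V$ by elementarity of $\bar{j}_{0\gamma}$, and thereby shows $\phi(a)$ is honestly consistent --- at which point the characterization, not the filter, yields $\phi(a)$ in $H_{\omega_2}^V$. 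None of these steps is present or replaceable by what you wrote.

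The implication (\ref{thm:char(*)-3})$\Rightarrow$(\ref{thm:char(*)-1}) has the same problem in a sharper form: you propose to assemble the iterations of (\ref{thm:char(*)-3}) directly into an $L(\UB)$-generic filter on $\Pmax$ and to get saturation of $\NS_{\omega_1}$ ``from the saturation machinery.'' This is not carried out and is substantially harder than what is needed; the entire point of the honest-consistency characterization is to avoid building the generic filter by hand. The paper instead takes an honestly consistent $\phi(A)$, uses (\ref{thm:char(*)-3}) to produce an $(\NS_{\omega_1},\UB)$-ec $M$ with an iteration hitting $A$, finds in a collapse extension an iterable $N\models\phi(A)$ witnessing honest consistency and a $\UB^{V[G]}$-correct $\bar{N}\leq N$, composes the iterations, and uses the ec property of $M$ to pull $\phi$ down to $H_{\omega_2}^M$ and then push it forward to $H_{\omega_2}^V$; saturation is absorbed into the quoted equivalence rather than derived separately. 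Your middle implication (\ref{thm:char(*)-2})$\Rightarrow$(\ref{thm:char(*)-3}) is essentially right and matches the paper's reflection argument via Fact~\ref{fac:densityUBcorrect}, but without the honest-consistency characterization the two implications that carry the mathematical content remain unproved.
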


Theorem \ref{thm:char(*)} is the key to the proofs of Theorem~\ref{thm:keythmmodcompanHomega2}
and to the missing implication in the proof of Theorem~\ref{Thm:mainthm-1bis}.

\subsubsection{Proof of Theorem~\ref{thm:keythmmodcompanHomega2}}

The theorem is an immediate corollary of the following:

\begin{lemma}\label{lem:keylemmodcomp(*)}

Let $B_1,\dots,B_k$ be new predicate symbols and 
$T_{B_1,\dots,B_k,\NS_{\omega_1}}$ 
be the $\tau_{\NS_{\omega_1}}\cup\bp{B_1,\dots,B_k}$-theory 
$\ZFC^*_{\NS_{\omega_1}}+\maxUB$ enriched with
the sentences asserting that $B_1,\dots,B_k$ are universally Baire sets.

Let $E_{B_1,\dots,B_k}$ consists of the set of
$M\in D_{\NS_{\omega_1},\UB}$ such that:
\begin{itemize}
\item
$M$ is $B_j$-iterable for all $j=1,\dots,k$;
\item 
there is $\UB_M$ witnessing $M\in D_{\NS_{\omega_1},\UB}$ with 
$B_j\in\UB_M$ for all $j$.
\end{itemize}
Let also $\bar{E}_{B_1,\dots,B_k}=\Cod_\omega^{-1}[E_{B_1,\dots,B_k}]$. 

Then $T_{B_1,\dots,B_k,\NS_{\omega_1}}$ proves that
$\bar{E}_{B_1,\dots,B_k}$ is universally Baire. 

Moreover let $T_{B_1,\dots,B_k,\bar{E}_{B_1,\dots,B_k},\NS_{\omega_1}}$ be the  natural extension of  
$T_{B_1,\dots,B_k,\NS_{\omega_1}}$
adding a predicate symbol for $\bar{E}_{B_1,\dots,B_k}$ 
and the axiom forcing its intepretation to be its definition.

Then $T_{B_1,\dots,B_k,\bar{E}_{B_1,\dots,B_k},\NS_{\omega_1}}$
models that every $\Sigma_1$-formula $\phi(\vec{x})$ for the signature
$\tau_{\NS_{\omega_1}}\cup\bp{B_1,\dots,B_k}$ is equivalent
to a $\Pi_1$-formula $\psi(\vec{x})$ in the signature
$\tau_{\NS_{\omega_1}}\cup\bp{B_1,\dots,B_k, \bar{E}_{B_1,\dots,B_k}}$.
\end{lemma}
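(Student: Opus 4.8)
The plan is to run Robinson's test exactly as in the new proof of Thm.~\ref{thm:modcompanHomega1} sketched above, but now at the level of $H_{\omega_2}$ instead of $H_{\omega_1}$, using the set $\bar{E}_{B_1,\dots,B_k}$ of $(\NS_{\omega_1},\UB)$-ec models in place of $\bar{D}_\UB$. First I would note that the universal Baireness of $\bar{E}_{B_1,\dots,B_k}$ is immediate from Remark~\ref{rmk:maxUBec}: the property ``$M$ is $(\NS_{\omega_1},\UB)$-ec, $B_j$-iterable for all $j$, and admits a witness $\UB_M$ containing all the $B_j$'' is correctly definable in $(H_{\omega_1}\cup\UB,\in)$, so by $\maxUB$ its $\Cod_\omega$-preimage is universally Baire; this is provable from $T_{B_1,\dots,B_k,\NS_{\omega_1}}$ since all the relevant definitions are lightface over $(H_{\omega_1}\cup\UB,\in)$.

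For the main equivalence I would fix an existential $\sigma_{\bp{B_1,\dots,B_k},\NS_{\omega_1}}$-formula $\phi(\vec{x})$ and define its translate $\psi(\vec{x})$ to be the $\Pi_1$-formula asserting, over the universally Baire parameter $\bar{E}_{B_1,\dots,B_k}$,
\[
\forall r\,[\Cod_\omega(r)\in \bar{E}_{B_1,\dots,B_k}\wedge \vec{x}\in H_{\omega_2}^{\Cod_\omega(r)}\rightarrow (H_{\omega_2}^{\Cod_\omega(r)},\sigma_{\UB_{\Cod_\omega(r)},\NS_{\omega_1}}^{\Cod_\omega(r)})\models\phi(\vec{x})].
\]
The heart of the argument is to show $T_{B_1,\dots,B_k,\bar{E}_{B_1,\dots,B_k},\NS_{\omega_1}}\vdash\forall\vec{x}(\phi(\vec{x})\leftrightarrow\psi(\vec{x}))$. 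The direction $\psi\to\phi$ uses Fact~\ref{fac:densityUBcorrect} (density of $(\NS_{\omega_1},\UB)$-ec structures): given $\vec{a}\in H_{\omega_2}$ I produce some $(\NS_{\omega_1},\UB)$-ec $M$ with $\vec{a}\in H_{\omega_2}^M$ and all $B_j\in\UB_M$ via an iteration sending a preimage of $\vec{a}$ to $\vec{a}$, witnessing that the universal statement $\psi$ forces $\phi(\vec a)$ to hold in some ec model below $V$, hence — by $\Sigma_1$-elementarity upward — in $V$. The direction $\phi\to\psi$ uses the defining property of $(\NS_{\omega_1},\UB)$-ec models directly: if $\phi(\vec a)$ holds in $H_{\omega_2}^V$ and $M$ is any ec model in $\bar E_{B_1,\dots,B_k}$ with $\vec a$ in the image of its $\omega_1$-iteration $j_{0\omega_1}$, then since $\phi$ is $\Sigma_1$ and $j_{0\omega_1}$ is a $\Sigma_1$-elementary map of $H_{\omega_2}^M$ into $H_{\omega_2}^V$ (that is exactly the ec clause), $\phi$ reflects down to $H_{\omega_2}^M$, giving $\psi(\vec a)$.

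The main obstacle I expect is bookkeeping the two-sided absoluteness of the coding between $V$ and $(H_{\omega_1}\cup\UB,\in)$ and across the iteration: I must verify that $(H_{\omega_2}^M,\sigma_{\UB_M,\NS_{\omega_1}}^M)$ is what $(H_{\omega_1}\cup\UB,\in)$ believes it to be, that the witness $\UB_M$ supplied by the definable map $M\mapsto\UB_M$ from the Fact above genuinely contains the $B_j$, and that $\Sigma_1$-formulae for $\sigma_{\UB_M,\NS_{\omega_1}}$ are correctly evaluated inside the iterate — the point where $B$-iterability ($A\cap M_\beta=j_{\alpha\beta}(A\cap M_0)$) and the ec clause on $\NS_{\omega_1}$ must be invoked in tandem. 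Once the translate $\psi$ is seen to be provably equivalent to $\phi$, Robinson's test \cite[Lemma 3.14(c)]{VIATAMSTI} yields model completeness of the relevant theory, and the stated reduction of existential formulae to universal ones follows; Theorem~\ref{thm:keythmmodcompanHomega2} is then immediate by relativizing everything to $L(\UB)[\dot G]$ and taking $\bar D_{\UB,\NS_{\omega_1}}=S_{\phi_{\UB,\NS_{\omega_1}}}$ to be $\bar E_{B_1,\dots,B_k}$ with the $B_j$ stripped out.
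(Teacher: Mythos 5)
Your overall architecture (a Robinson-style translate over the universally Baire parameter $\bar{E}_{B_1,\dots,B_k}$, upward absoluteness for one direction, the ec clause for the other) matches the paper's, and your treatment of the universal Baireness of $\bar{E}_{B_1,\dots,B_k}$ is exactly the paper's. But the displayed $\Pi_1$-translate is wrong, and the error sits precisely where the $H_{\omega_2}$ case genuinely differs from the $H_{\omega_1}$ warm-up. Your $\psi(\vec{x})$ guards the implication with ``$\vec{x}\in H_{\omega_2}^{\Cod_\omega(r)}$'' and then asserts $\phi(\vec{x})$ inside $\Cod_\omega(r)$. Since $\Cod_\omega(r)$ is a countable transitive model, no $\vec{x}$ of cardinality $\aleph_1$ (e.g.\ a subset of $\omega_1$) can literally belong to it, so for uncountable $\vec{x}$ your $\psi(\vec{x})$ is vacuously true and cannot be equivalent to $\phi(\vec{x})$. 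The correct translate quantifies over pairs $(M,\mathcal{J})$ with $M\in E_{B_1,\dots,B_k}$ and $\mathcal{J}$ an iteration of $M$ of length $\omega_1$ satisfying $\NS_{\omega_1}^{j_{0\omega_1}(M)}=\NS_{\omega_1}\cap j_{0\omega_1}(M)$ and $\vec{x}=j_{0\omega_1}(\vec{a})$ for some $\vec{a}\in M$, and asserts $\phi(\vec{a})$ --- of the \emph{preimage}, in $H_{\omega_2}^{M}$. Your prose (``via an iteration sending a preimage of $\vec{a}$ to $\vec{a}$'') shows you sensed this, but the formula you would actually feed to Robinson's test does not implement it, and the equivalence fails for the formula as written.

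Two further gaps. First, the existence, for given $\vec{A}\in H_{\omega_2}$ and the fixed $B_1,\dots,B_k$, of an $(\NS_{\omega_1},\UB)$-ec $M$ together with an $\NS_{\omega_1}$-correct iteration whose $j_{0\omega_1}$ hits $\vec{A}$ is not Fact~\ref{fac:densityUBcorrect} (which only produces $\UB$-correct structures above a given iterable one); it is clause (\ref{thm:char(*)-3}) of Theorem~\ref{thm:char(*)}, i.e.\ one of the equivalents of $\stUB$, and this is exactly where that hypothesis enters to make the $\psi\to\phi$ direction non-vacuous. Second, in the $\phi\to\psi$ direction the ec clause only yields $\Sigma_1$-elementarity of $j_{0\omega_1}$ into $H_{\omega_2}^{N}$ for \emph{countable} $\UB$-correct $N\geq M$; to conclude reflection from $H_{\omega_2}^{V}$ down to $H_{\omega_2}^{M}$ one must pass to a $\Coll(\omega,\delta)$-generic extension in which $V_\delta$ becomes such an $N$ (Lemma~\ref{lem:UBcorr}) and use that being $(\NS_{\omega_1},\UB)$-ec is preserved there by $\maxUB$. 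Neither step appears in your outline, and both are needed.
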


\begin{proof}
$\bar{E}_{B_1,\dots,B_k}$ is universally Baire by $\maxUB$,
since $E_{B_1,\dots,B_k}$
is definable in $(H_{\omega_1}\cup\mathsf{UB},\in)$ with parameters the universally Baire sets
$B_1,\dots,B_k,\bar{D}_{\NS_{\omega_1},\UB}$.

Given any $\Sigma_1$-formula $\phi(\vec{x})$ for $\tau_{\NS_{\omega_1}}\cup\bp{B_1,\dots,B_k}$
mentioning the universally Baire predicates $B_1,\dots,B_k$, we want
to find a universal formula $\psi(\vec{x})$ such that
\[
T_{\bp{B_1,\dots,B_k, \bar{E}_{B_1,\dots,B_k}},\NS_{\omega_1}}\models \forall\vec{x}(\phi(\vec{x})\leftrightarrow \psi(\vec{x})).
\]

Let $\psi(\vec{x})$ be the formula
asserting:
 
\begin{quote}
\emph{For all $M\in E_{B_1,\dots,B_k}$, for all iterations 
$\mathcal{J}=\bp{j_\alpha\beta:\alpha\leq\beta\leq\omega_1}$ of $M$ such that:}
\begin{itemize}
\item
$\vec{x}=j_{0\omega_1}(\vec{a})$\emph{ for some }$\vec{a}\in M$,
\item
$\NS_{\omega_1}^{j_{0\omega_1}(M)}=\NS_{\omega_1}\cap j_{0\omega_1}(M)$,
\end{itemize}
\[
(H_{\omega_2}^{M},\sigma_{\UB_{M},\NS_{\omega_1}}^{M})\models\phi(\vec{a}).
\]
\end{quote}

More formally:
\begin{align*}
\forall r\, \forall \mathcal{J}&\{&\\
&[&\\
&(r\in \bar{E}_{B_1,\dots,B_k})\wedge&\\
&\wedge\mathcal{J}=\bp{j_\alpha\beta:\alpha\leq\beta\leq\omega_1} \text{ is an iteration of }\Cod(r)\wedge&\\
&\wedge\NS_{\omega_1}^{j_{0\omega_1}(\Cod(r))}=\NS_{\omega_1}\cap j_{0\omega_1}(\Cod(r))\wedge&\\
&\wedge \exists\vec{a}\in\Cod(r)\,(\vec{x}=j_{0\omega_1}(\vec{a}))&\\
&]&\\
&\rightarrow&\\
&(H_{\omega_2}^{\Cod(r)},\sigma_{\UB_{\Cod(r)},\NS_{\omega_1}}^{\Cod(r)})\models\phi(\vec{a})&\\
&\}.&
\end{align*}
The above is a $\Pi_1$-formula  for 
$\tau_{\NS_{\omega_1}}\cup\bp{B_1,\dots,B_k, \bar{E}_{B_1,\dots,B_k}}$. 

 
(We leave to the reader to check that the property 
\begin{quote}
\emph{$\mathcal{J}=\bp{j_\alpha\beta:\alpha\leq\beta\leq\omega_1}$ is an iteration of $M$ such that 
$\NS_{\omega_1}^{j_{0\omega_1}(M)}=\NS_{\omega_1}\cap j_{0\omega_1}(M)$}
\end{quote}
is definable by a $\Delta_1$-property in parameters $M,\mathcal{J}$ in the signature
$\tau_{\NS_{\omega_1}}$).

Now it is not hard to check that:
\begin{claim}
For all $\vec{A}\in H_{\omega_2}$
\[
(H_{\omega_2}^V,\tau_{\NS_{\omega_1}}^V,B_1,\dots,B_k)\models\phi(\vec{A})
\]
if and only if 
\[
(H_{\omega_2},\tau_{\NS_{\omega_1}}^V,B_1,\dots,B_k, \bar{E}_{B_1,\dots,B_k})
\models\psi(\vec{A}).
\]
\end{claim}
\begin{proof}
\emph{}

\begin{description}
\item[$\psi(\vec{A})\rightarrow \phi(\vec{A})$]
Take any
$M$ and $\mathcal{J}$ satisfying the premises of the implication in $\psi(\vec{A})$, 
Then $(H_{\omega_2}^M,\tau_{\NS_{\omega_1},\UB^M}^M)\models\phi(\vec{a})$
for some $\vec{a}$ such that $j_{0,\omega_1}(\vec{a})=\vec{A}$ and
$B_j\cap M_{\omega_1}=j_{0\omega_1}(B_j\cap M)$ for all $j=1,\dots,k$.

Since $\Sigma_1$-properties are upward absolute and
$(M_{\omega_1},\tau_{\NS_{\omega_1}}^{M_{\omega_1}},B_j\cap M_{\omega_1}:j=1,\dots,k)$
is a $\tau_{\NS_{\omega_1}}\cup\bp{B_1,\dots,B_k}$-substructure 
of  $(H_{\omega_2},\tau_{\NS_{\omega_1}}^V,B_j:j=1,\dots,k)$ which models $\phi(\vec{A})$, we get that
$\phi(\vec{A})$ holds for $(H_{\omega_2},\tau_{\NS_{\omega_1}}^V,B_1,\dots,B_k)$.

\item[$\phi(\vec{A})\rightarrow \psi(\vec{A})$]
Assume
\[
(H_{\omega_2},\tau_{\NS_{\omega_1}}^V,B_1,\dots,B_k)\models\phi(\vec{A}).
\]
Take any $(\NS_{\omega_1},\UB)$-ec $M\in V$ and any iteration 
$\mathcal{J}=\bp{j_\alpha\beta:\alpha\leq\beta\leq\omega_1}$  of  $M$ witnessing the premises
of the implication in 
$\psi(\vec{A})$, in particular such that:
\begin{itemize}
\item
$\vec{A}=j_{0\omega_1}(\vec{a})\in M_{\omega_1}$ for some $\vec{a}\in M$, 
\item
$\NS_{\omega_1}^{M_{\omega_1}}=\NS_{\omega_1}\cap M_{\omega_1}$,
\item
$M$ is $B_j$-iterable for $j=1,\dots,k$.
\end{itemize}

Such $M$ and $\mathcal{J}$ exists by Thm.~\ref{thm:char(*)}(\ref{thm:char(*)-3}) applied to 
$\bar{E}_{B_1,\dots,B_k}$ and $\vec{A}$.

Let $G$ be $V$-generic for $\Coll(\omega,\delta)$ with $\delta$ inaccessible.
Then in $V[G]$, $V_\delta$ is $\UB^{V[G]}$-correct, by Lemma \ref{lem:UBcorr}.
 
Therefore (since $M$ is $(\NS_{\omega_1},\UB^{V[G]})$-ec also in $V[G]$ by $\maxUB$),
$V[G]$ models that
$j_{0\omega_1^V}$ is a $\Sigma_1$-elementary embedding of
\[
(H_{\omega_2}^{M},\tau_{\NS_{\omega_1}}^{M},B\cap M:B\in\mathsf{UB}_M)
\]
into 
\[
(H_{\omega_2}^V,\tau_{\NS_{\omega_1}}^V,B:B\in\UB_M).
\]
This grants that
\[
(H_{\omega_2}^M,\tau_{\NS_{\omega_1}}^M,B\cap M:B\in\mathsf{UB}_M)\models\phi(\vec{a}),
\]
as was to be shown.
\end{description}
\end{proof}

The Lemma is proved.

\end{proof}

%

\subsubsection{Proof of (\ref{thm:char(*)-modcomp-2})$\to$(\ref{thm:char(*)-modcomp-1})
of Theorem~\ref{Thm:mainthm-1bis}}

\begin{proof}
Assume $\delta$ is supercompact, $P$ is a standard forcing notion to force $\MM^{++}$ of size $\delta$ (such as the one introduced in 
\cite{FORMAGSHE} to prove the consistency of Martin's maximum), and $G$ is $V$-generic for $P$; then
$(*)$-$\UB$ holds in $V[G]$ by Asper\'o and Schindler's recent breakthrough \cite{ASPSCH(*)}.
By Thm. \ref{thm:PI1invomega2}
$V$ and $V[G]$ agree on the $\Pi_1$-fragment of their $\sigma_{\UB^V,\NS_{\omega_1}}$-theory, therefore so do 
$H_{\omega_2}^V$ and $H_{\omega_2}^{V[G]}$ (by \cite[Lemma LEVABS]{VIATAMSTI} 
applied in $V$ and $V[G]$ respectively).

Since $P\in\SSP$
\[
(H_{\omega_2}^V,\tau_{\NS_{\omega_1}}^V,A:A\in \UB^V)\sqsubseteq
(H_{\omega_2}^{V[G]},\tau_{\NS_{\omega_1}}^{V[G]},A^{V[G]}: A\in\UB^V).
\]

Now the model completeness of $T_{\NS_{\omega_1},\UB}$-grants that any of its models (among which $H_{\omega_2}^V$)
is $(T_{\NS_{\omega_1},\UB})_\forall$-ec. This gives that:
\[
(H_{\omega_2}^V,\tau_{\NS_{\omega_1}}^V,\UB^V)\prec_{\Sigma_1}
(H_{\omega_2}^{V[G]},\tau_{\NS_{\omega_1}}^{V[G]},A^{V[G]}: A\in\UB).
\]

Therefore any $\Pi_2$-property for $\sigma_{\UB,\NS_{\omega_1}}$ with parameters in 
$H_{\omega_2}^V$ which holds in
\[
(H_{\omega_2}^{V[G]},\tau_{\NS_{\omega_1}}^{V[G]},A^{V[G]}: A\in\UB)
\]
also holds in $(H_{\omega_2}^V,\tau_{\NS_{\omega_1}}^V,\UB^V)$.

Hence in $H_{\omega_2}^V$ it holds characterization (\ref{thm:char(*)-3}) of $(*)$-$\UB$  given by Thm.~\ref{thm:char(*)} and we are done.
\end{proof}

\subsubsection{Proof of Theorem \ref{thm:char(*)}}

\begin{proof}
Schindler and Asper\'o \cite[Def. 2.1]{SCHASPBMM*++} introduced the following:
\begin{definition} 
Let $\phi(\vec{x})$ be a $\sigma_{\UB,\NS_{\omega_1}}$-formula in free variables $\vec{x}$,
and $\vec{A}\in H_{\omega_2}^V$.
$\phi(\vec{A})$ is \emph{$\UB$-honestly consistent} if for all universally Baire sets 
$U\in \mathsf{UB}^V$,
there is some large enough cardinal $\kappa\in V$ such that 
whenever $G$ is $V$-generic for $\Coll(\omega,\kappa)$, in $V[G]$ there is 
a $\sigma_{\UB,\NS_{\omega_1}}$-structure $\mathcal{M}=(M,\dots)$ such that 
\begin{itemize}
\item
$M$ is transitive and $U$-iterable,
\item
$\mathcal{M}\models \phi(\vec{A})$,
\item
$\NS_{\omega_1}^M\cap V=\NS_{\omega_1}^V$.
\end{itemize}
\end{definition}
They also proved the following Theorem \cite[Thm. 2.7, Thm. 2.8]{SCHASPBMM*++}:
\begin{theorem}
Assume $V$ models $\NS_{\omega_1}$ is precipitous and 
$\maxUB$ holds.

TFAE:
\begin{itemize}
\item $(*)$-$\UB$ holds in $V$.
\item Whenever $\phi(\vec{x})$ is a $\Sigma_1$-formula for $\sigma_{\UB,\NS_{\omega_1}}$ in free variables $\vec{x}$, and $\vec{A}\in H_{\omega_2}^V$,
$\phi(\vec{A})$ is honestly consistent if and only if it is true in $H_{\omega_2}^V$.
\end{itemize}
\end{theorem}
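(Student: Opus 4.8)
The plan is to prove the two implications of the equivalence separately, reading throughout \emph{$\UB$-honest consistency of $\phi(\vec A)$} as the assertion that, after collapsing a sufficiently large cardinal, there is a $U$-iterable $\sigma_{\UB,\NS_{\omega_1}}$-structure $\mathcal M=(M,\dots)$ with correct nonstationary ideal (i.e.\ $\NS_{\omega_1}^M\cap V=\NS_{\omega_1}^V$) whose $H_{\omega_2}$ satisfies $\phi(\vec A)$; this is exactly the data of a $\Pmax$-precondition in the sense of \cite{HSTLARSON} forcing $\phi(\vec A)$ over its own $H_{\omega_2}$. The role of $\maxUB$, just as exploited in Lemma~\ref{lem:UBcorr} and Fact~\ref{fac:densityUBcorrect}, is to make the sets of reals definable over $L(\UB)$ universally Baire and correctly computed in $(H_{\omega_1}\cup\UB,\in)$, so that the dense subsets of $\Pmax$ definable over $L(\UB)$ behave exactly as the $L(\mathbb R)$-definable ones do in Larson's treatment. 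I note first that ``truth in $H_{\omega_2}^V$ $\Rightarrow$ honest consistency'' needs neither $(*)$-$\UB$ nor saturation: if $H_{\omega_2}^V\models\phi(\vec A)$ then, collapsing an inaccessible $\delta$ with $\vec A,U\in V_\delta$, Lemma~\ref{lem:UBcorr} makes $V_\delta$ a $U$-iterable, $\UB$-correct structure with $\NS_{\omega_1}^{V_\delta}\cap V=\NS_{\omega_1}^V$ and $H_{\omega_2}^{V_\delta}\models\phi(\vec A)$, which is the required witness.

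For the direction $(*)$-$\UB\Rightarrow$ the characterization, the content lies in ``honest consistency $\Rightarrow$ truth''. Fix an $L(\UB)$-generic filter $G$ for $\Pmax$ with $\pow{\omega_1}^V\subseteq L(\UB)[G]$, so $H_{\omega_2}^V=H_{\omega_2}^{L(\UB)[G]}$. Given a witness $\mathcal M$ to honest consistency of $\phi(\vec A)$ (with $U$ chosen to code the relevant parameters and iterations), I would read $\mathcal M$ as a $\Pmax$-condition and, using the iteration lemmas \cite[Lemma~1.5, Lemma~1.6]{HSTLARSON} together with $U$-iterability preserving nonstationarity, produce an iteration $j\colon\mathcal M\to\mathcal M^*$ realized by $G$ with $\NS_{\omega_1}^{M^*}=\NS_{\omega_1}^V\cap M^*$ and a $\Sigma_1$-elementary, $\sigma_{\UB,\NS_{\omega_1}}$-preserving map $H_{\omega_2}^{M^*}\to H_{\omega_2}^{L(\UB)[G]}$ fixing $\vec A$. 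Since $\phi$ is $\Sigma_1$ and holds in $H_{\omega_2}^{M}$, hence in $H_{\omega_2}^{M^*}$ by elementarity, upward absoluteness gives $H_{\omega_2}^V=H_{\omega_2}^{L(\UB)[G]}\models\phi(\vec A)$.

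For the converse, assume honest consistency coincides with truth for $\Sigma_1$-formulas and define the candidate filter $G$ to consist of the $\Pmax$-conditions $p$ that are \emph{correctly realized} by $H_{\omega_2}^V$, i.e.\ such that some iteration $j_{0\omega_1}$ of $p$ maps $p$ into $H_{\omega_2}^V$ with $\NS_{\omega_1}^{j_{0\omega_1}(p)}=\NS_{\omega_1}^V\cap j_{0\omega_1}(p)$ --- exactly the pattern of the universal formula built in Lemma~\ref{lem:keylemmodcomp(*)}. The iteration lemmas of \cite{HSTLARSON} show $G$ is a filter. To see it is $L(\UB)$-generic, fix a dense $D\subseteq\Pmax$ definable over $L(\UB)$; by $\maxUB$ the predicate ``$p\in D$'' is captured by a universally Baire set, so ``there is $p\in D$ correctly realized'' is a $\Sigma_1$-assertion over $\sigma_{\UB,\NS_{\omega_1}}$ with a parameter in $H_{\omega_2}^V$. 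Density of $D$ makes it honestly consistent (a witness is found by collapsing and locating, inside a $U$-iterable $V_\delta$, a condition of $D$ below a correctly realized one), hence true in $H_{\omega_2}^V$, so $G\cap D\ne\emptyset$. Letting $D$ range over the $L(\UB)$-definable dense sets that capture each $A\in\pow{\omega_1}^V$ secures $\pow{\omega_1}^V\subseteq L(\UB)[G]$, and the same argument through the dense sets driving the standard $\Pmax$-analysis upgrades the assumed precipitousness to saturation of $\NS_{\omega_1}$; together these give $(*)$-$\UB$.

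The hard part will be this converse. Verifying that $G$ is genuinely a \emph{filter} (not merely a set of pairwise correctly-realizable conditions) requires the amalgamation and iterability results of \cite{HSTLARSON}; and the genericity step hinges on phrasing ``$p\in D$ and $p$ is correctly realized'' as a single $\Sigma_1$-formula of $\sigma_{\UB,\NS_{\omega_1}}$, which depends both on $\maxUB$ to present the $L(\UB)$-definable $D$ through a universally Baire predicate and on the $\Delta_1$-definability of the ``correct $\NS_{\omega_1}$'' clause noted before the Claim in Lemma~\ref{lem:keylemmodcomp(*)}. The single most delicate point is deriving \emph{saturation} of $\NS_{\omega_1}$, rather than the merely assumed precipitousness, from the characterization: this needs the full $\Pmax$-machinery of \cite{HSTLARSON} transferred from $L(\mathbb R)$ to $L(\UB)$, a transfer licensed precisely by $\maxUB$, which (as in Lemma~\ref{lem:UBcorr}) makes the sets of reals definable over $L(\UB)$ universally Baire and iterably correct.
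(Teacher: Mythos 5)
The first thing to note is that the paper does not prove this statement at all: it is stated inside the proof of Theorem~\ref{thm:char(*)} as an imported result of Asper\'o and Schindler (\cite[Thm.~2.7, Thm.~2.8]{SCHASPBMM*++}, with $\UB$-honest consistency taken from \cite[Def.~2.1]{SCHASPBMM*++}) and then used as a black box. So there is no internal proof to compare yours against; what you have written is an attempted reconstruction of the Asper\'o--Schindler argument.

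As a reconstruction it identifies plausible ingredients but leaves genuine gaps exactly where the mathematical content lies. Your observation that truth in $H_{\omega_2}^V$ yields honest consistency via Lemma~\ref{lem:UBcorr} is fine. But in the direction $(*)$-$\UB$ $\Rightarrow$ (honest consistency $\Rightarrow$ truth) you propose to ``read $\mathcal{M}$ as a $\Pmax$-condition \dots realized by $G$''; the witness $\mathcal{M}$, however, lives in a collapse extension $V[\Coll(\omega,\kappa)]$, not in $V$ or $L(\UB)$, whereas the filter $G$ provided by $\stUB$ lives in $V$, so $\mathcal{M}$ cannot literally be a condition realized by $G$. The actual argument must extract, from the existence of witnesses in collapse extensions, the density (below conditions capturing $\vec{A}$) of conditions forcing $\phi$ of the image of the parameter over $L(\UB)$ --- a step you do not supply. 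In the converse direction, the claims that the set of correctly realized conditions is a filter, that it meets every $L(\UB)$-definable dense set, and above all that \emph{saturation} (rather than the assumed precipitousness) of $\NS_{\omega_1}$ follows, are precisely the content of the theorem; deferring them wholesale to ``the full $\Pmax$-machinery of \cite{HSTLARSON}'' leaves the proof incomplete. In particular the genericity step needs an argument that ``some $p\in D$ is correctly realized'' is honestly consistent, i.e.\ the production, in a collapse extension, of an iterable structure with correct $\NS_{\omega_1}$ whose $H_{\omega_2}$ witnesses such a $p$; this is not a routine density observation, and nothing in your sketch carries it out.
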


We use Schindler and Asper\'o characterization of $(*)$-$\UB$ to prove the equivalences
of the three items of Thm. \ref{thm:char(*)}
(the proofs of these implications import key ideas
from \cite[Lemma 3.2]{ASPSCH(*)}).

\begin{description}
\item[(\ref{thm:char(*)-1})
implies (\ref{thm:char(*)-2})]
Let $G$ be $V$-generic for $\Coll(\omega,\delta)$.
By Lemma \ref{lem:UBcorr},
$V_\delta$ is $\UB^{V[G]}$-correct in $V[G]$ as witnessed by $\bp{B^{V[G]}:B\in \UB^V}=\UB_V=\bp{B_n^{V[G]}:n\in\omega}$.

\begin{claim}
$V_\delta$ is $(\NS_{\omega_1},\UB^{V[G]})$-ec as witnessed by $\UB_V$.
\end{claim}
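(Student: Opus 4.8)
The plan is to verify, working inside $V[G]$, the three clauses in the definition of $(\NS_{\omega_1},\UB^{V[G]})$-ec for $M:=V_\delta$ with the witness $\UB_V=\bp{B^{V[G]}:B\in\UB^V}$. That $V_\delta$ is $\UB^{V[G]}$-correct with this witness is exactly Lemma~\ref{lem:UBcorr}, and that $V_\delta$ models $\NS_{\omega_1}$ precipitous follows from $H_{\omega_2}^{V_\delta}=H_{\omega_2}^V$ (as $\delta$ is inaccessible, so $\delta>\omega_2$) together with the saturation of $\NS_{\omega_1}$ granted by $\stUB$: saturation is a property of $H_{\omega_2}$, hence reflects to $V_\delta$, and it implies precipitousness there. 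The remaining clause is the heart of the matter: given any $\UB^{V[G]}$-correct $N$ with $V_\delta\geq N$ and witness $\UB_N\supseteq\UB_V$, and any iteration $\mathcal{J}=\bp{j_{\alpha\beta}:\alpha\leq\beta\leq\gamma=\omega_1^N}$ of $V_\delta$ in $N$ with $\NS_{\omega_1}^{M_\gamma}=\NS_{\omega_1}^N\cap M_\gamma$, I must show that $j_{0\gamma}$ is a $\Sigma_1$-elementary embedding of $H_{\omega_2}^{V_\delta}$ into $H_{\omega_2}^N$ for $\sigma_{\UB_V,\NS_{\omega_1}}$. Since $j_{0\gamma}\colon V_\delta\to M_\gamma$ is fully elementary and $B\cap M_\gamma=j_{0\gamma}(B\cap V_\delta)$ for each $B\in\UB_V$ by $B$-iterability, this reduces to showing that the transitive inclusion $H_{\omega_2}^{M_\gamma}\subseteq H_{\omega_2}^N$ is $\Sigma_1$-elementary in the signature $\sigma_{\UB_V,\NS_{\omega_1}}$.

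For that inclusion the upward direction is routine: the predicates cohere, since $\NS_{\omega_1}^{M_\gamma}=\NS_{\omega_1}^N\cap M_\gamma$ by hypothesis and $B\cap M_\gamma=(B\cap N)\cap M_\gamma$ by iterability, so any bounded witness found in the transitive set $M_\gamma$ remains a witness in $N$; hence $\Sigma_1$-truth passes from $H_{\omega_2}^{M_\gamma}$ up to $H_{\omega_2}^N$. The real work is the downward (reflection) direction, and this is where $\stUB$ enters, through the Schindler--Asper\'o characterization quoted above.

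I would first observe that $M_\gamma$ satisfies that characterization \emph{internally}: as $M_\gamma=j_{0\gamma}(V_\delta)$ is an elementary image of $V_\delta$, and $V_\delta$ agrees with $V$ on all statements about $H_{\omega_2}$, one has $M_\gamma\models\maxUB+\stUB$ in the relevant sense, so inside $M_\gamma$ truth of $\Sigma_1$-formulae in $H_{\omega_2}$ coincides with $\UB$-honest consistency. Thus to conclude $H_{\omega_2}^{M_\gamma}\models\phi(j_{0\gamma}(\vec a))$ from $H_{\omega_2}^N\models\phi(j_{0\gamma}(\vec a))$, for $\phi$ a $\Sigma_1$-formula, it suffices to show that $\phi(j_{0\gamma}(\vec a))$ is $\UB$-honestly consistent \emph{from the point of view of $M_\gamma$}. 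The intended honest witness is $N$ itself: it is transitive, its $H_{\omega_2}$ models $\phi(j_{0\gamma}(\vec a))$, it is $U$-iterable for every $U$ in its witness $\UB_N$ (in particular for every member of $\UB_V$), and, crucially, it satisfies the nonstationary coherence required by honest consistency, precisely because $\NS_{\omega_1}^{M_\gamma}=\NS_{\omega_1}^N\cap M_\gamma$.

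The step I expect to be the main obstacle is feeding this witness $N$ into the honest-consistency clause as evaluated in $M_\gamma$: honest consistency over $M_\gamma$ demands a witness living in a $\Coll(\omega,\kappa)$-extension of $M_\gamma$, whereas $N$ is an ambient object of $V[G]$ that need not be set-generic over $M_\gamma$. I would resolve this by exploiting the generic robustness of $\UB$-honest consistency together with a Löwenheim--Skolem/absoluteness argument carried out in $V[G]$: collapse enough of $V[G]$ to render $N$ countable, extract inside the resulting extension of $M_\gamma$ a countable transitive $U$-iterable elementary substructure of $N$ that still models $\phi(j_{0\gamma}(\vec a))$ and retains the $\NS_{\omega_1}$-coherence with $M_\gamma$, and check that this is exactly an honest witness as seen by $M_\gamma$. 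Once the witness is produced, the internal Schindler--Asper\'o characterization inside $M_\gamma$ delivers $H_{\omega_2}^{M_\gamma}\models\phi(j_{0\gamma}(\vec a))$, completing the reflection direction and hence the claim. I would close by remarking that only $\maxUB$ was used to secure the definability and universal Baireness of the relevant sets, and $\stUB$ entered solely through the characterization, so the argument is uniform across all models of $\maxUB+\stUB$.
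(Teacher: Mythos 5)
Your overall skeleton is right up to the point you yourself flag as ``the main obstacle'', and it is exactly there that the argument breaks down. Verifying that $\phi(j_{0\gamma}(\vec a))$ is $\UB$-honestly consistent \emph{from the point of view of $M_\gamma$} means verifying a first-order forcing assertion of $M_\gamma$: for every $U\in\UB^{M_\gamma}$ some $\Coll(\omega,\kappa)^{M_\gamma}$ forces the existence of a suitable iterable witness \emph{inside $M_\gamma[H]$}. A countable elementary hull of $N$ extracted in $V[G]$ (or in any further collapse of $V[G]$) is not thereby an element of $M_\gamma[H]$ for any $M_\gamma$-generic $H$ --- membership in $M_\gamma[H]$ requires being the value of an $M_\gamma$-name --- so your L\"owenheim--Skolem step does not place the witness where honest consistency over $M_\gamma$ needs it. You also lean on ``$M_\gamma\models\maxUB+\stUB$'', which is not free: $\maxUB$ quantifies over all set forcings, its reflection to $V_\delta$ (hence, by elementarity of $j_{0\gamma}$, to $M_\gamma$) is not automatic, and the paper never needs it.

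The paper's proof supplies precisely the missing transfer mechanism and, crucially, applies the Schindler--Asper\'o characterization in $V$ (where $(*)$-$\UB$ is the standing hypothesis), not in $M_\gamma$. It extends $\mathcal J$ to an iteration $\bar{\mathcal J}$ of the \emph{full} $V$ with last model $\bar M=\bar j_{0\gamma}(V)$, passes to a genuine $V$-generic collapse $H$ with $G\in V[H]$, and proves (via the subclaim that the images $\bar j_{0\gamma}(T_B)$, $\bar j_{0\gamma}(S_B)$ of the trees still project to complements) that the $\UB^V$-predicates cohere between $\bar M[H]$ and $V[H]$; then the model completeness of $T_{\UB^V}$ (Thm.~\ref{thm:modcompanHomega1}) upgrades $\sqsubseteq$ to $H_{\omega_1}^{\bar M[H]}\prec H_{\omega_1}^{V[H]}$, so the existential statement ``there is an iterable superstructure of $j_{0\gamma}(V_\delta)$ realizing $\phi(j_{0\gamma}(a))$'' --- true in $V[H]$ as witnessed by $N$ --- reflects into $\bar M[H]$. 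Homogeneity of the collapse and elementarity of $\bar j_{0\gamma}$ then pull this back to a forcing statement of $V$, establishing honest consistency of $\phi(a)$ in $V$, whence $H_{\omega_2}^V\models\phi(a)$. Without the tree subclaim and the model-completeness bridge your reflection step has no support, so as written the proposal has a genuine gap at its central step.
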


\begin{proof}
Let in $V[G]$ $B_V=B_{\UB_V}=\prod_{n\in\omega}B_n^{V[G]}$ be the universally Baire set coding $\UB_V$.

Let $N\leq V_\delta$ in $V[G]$ be $\UB^{V[G]}$-correct with $B_V\in \UB_N$ for some $\UB_N$ witnessing that 
$N$ is $\UB^{V[G]}$-correct.
Then we already observed that $\bp{B^{V[G]}\cap N: B^{V[G]}\in \UB_V}\subseteq \bp{B\cap N: \, B\in \UB_N}$.
Therefore
\[
(H_{\omega_1}^V,\sigma_{\UB_V}^V)=(H_{\omega_1}^V,\sigma_{\UB^V}^V)
\prec (H_{\omega_1}^N,\tau_{\ST}^N, B^{V[G]}\cap N: B\in \UB^V).
\]

Let 
\[
\mathcal{J}=\bp{j_{\alpha,\beta}:\alpha\leq\beta\leq\gamma=(\omega_1)^N}\in N
\] 
be an iteration witnessing
$V_\delta\geq N$ in $V[G]$.

We must show that 
\[
j_{0\gamma}:H_{\omega_2}^V\to H_{\omega_2}^N
\]
is $\Sigma_1$-elementary for $\tau_{\NS_{\omega_1},\UB^V}$ between
\[
(H_{\omega_2}^V,\tau_{\ST}^V,\UB^V,\NS_{\omega_1}^V)
\] 
and 
\[
(H_{\omega_2}^N,\tau_{\ST}^N,B^{V[G]}\cap N: B\in \UB^V,\NS_{\omega_1}^N).
\] 

Let $\phi(a)$ be a $\Sigma_1$-formula for $\tau_{\NS_{\omega_1},\UB^V}$ in parameter 
$a\in H_{\omega_2}^V$ with $B_1,\dots,B_k\in\UB^V$ the universally Baire predicates  occurring in $\phi$
such that 
\[
(N,\tau_{\ST}^N, B^{V[G]}\cap N: B\in \UB^V, \NS_{\omega_1}^N)\models\phi(j_{0\gamma}(a)).
\]
We must show that
\[
(H_{\omega_2}^V,\tau_{\ST}^V, \UB^V, \NS_{\omega_1}^V)\models\phi(a).
\]


Remark that the iteration $\mathcal{J}$ extends to an iteration 
$\bar{\mathcal{J}}=\bp{\bar{j}_{\alpha,\beta}:\alpha\leq\beta\leq\gamma=(\omega_1)^N}$ 
of $V$ exactly as already done in the proof of Lemma \ref{lem:UBcorr}.

Using this observation, let
$\bar{M}=\bar{j}_{0\gamma}(V)$;
then $\NS_{\omega_1}^{\bar{M}}=\NS_{\omega_1}^N\cap \bar{M}$.

Now let $H$ be $V$-generic for $\Coll(\omega,\eta)$ with $G\in V[H]$
for some $\eta>\delta$ inaccessible in $V[G]$.

By $\maxUB$
$N$ is $\UB^{V[H]}$-correct in $V[H]$:
on the one hand 
\[
D_{\UB^{V[H]}}=\Cod[\bar{D}_{\UB^{V[G]}}^{V[H]}],
\]
on the other hand
\[
N\in \Cod[\bar{D}_{\UB^{V[G]}}]\subseteq \Cod[\bar{D}_{\UB^{V[G]}}^{V[H]}].
\]
In particular for any $B\in \UB_V$, $N$ is $B^{V[H]}$-iterable in $V[H]$.

Therefore in $H_{\omega_1}^{V[H]}$ for any $B\in\mathsf{UB}^V$, the statement 
\begin{quote}
\emph{There exists a $\tau_{\NS_{\omega_1}}\cup\bp{B,B_1,\dots,B_k}$-super-structure 
$\bar{N}$ of
$j_{0\gamma}(V_\delta)$
which is
$\bp{B^{V[H]},B_1^{V[H]},\dots,B_k^{V[H]}}$-iterable and which realizes 
$\phi(j_{0\gamma}(a))$}
\end{quote}
holds true as witnessed by $N$.

The following is a key observation:
\begin{subclaim}
For any $s\in (2^{\omega})^{\bar{M}[H]}$ and $B\in \UB^V$
\[
s\in j_{0\gamma}(B)^{\bar{M}[H]}\text{ if and only if } s\in B^{V[H]}\cap \bar{M}[H].
\]
\end{subclaim}
\begin{proof}
For each $B\in \UB^V$ find in $V$ trees $(T_B,S_B)$ which project to complement in $V[H]$ and such that 
$B=p[T_B]$.
Now since $\bar{j}_{0,\gamma}[T_B]\subseteq \bar{j}_{0,\gamma}(T_B)$ and 
$\bar{j}_{0,\gamma}[S_B]\subseteq \bar{j}_{0,\gamma}(S_B)$, we get that 
\begin{itemize}
\item
$(2^{\omega})^{V[H]}=p[[\bar{j}_{0,\gamma}(T_B)]]\cup p[[\bar{j}_{0,\gamma}(S_B)]]$ (since $(2^{\omega})^{V[H]}$ is already covered by $p[[\bar{j}_{0,\gamma}[T_B]]]\cup p[[\bar{j}_{0,\gamma}[S_B]]]$).
\item
$\emptyset=p[[\bar{j}_{0,\gamma}(T_B)]]\cap p[[\bar{j}_{0,\gamma}(S_B)]]$ by elementarity of $\bar{j}_{0,\gamma}$.
\end{itemize}
Hence $B^{V[H]}$ is also the projection of $\bar{j}_{0,\gamma}(T_B)$ 
and the pair $(\bar{j}_{0,\gamma}(T_B), \bar{j}_{0,\gamma}(S_B))$ projects to complement in $V[H]$.

But this pair belongs to $\bar{M}$, and 
(by elementarity of $\bar{j}_{0\gamma}$)
\[
\bar{M}\models(\bar{j}_{0,\gamma}(T_B), \bar{j}_{0,\gamma}(S_B))\text{ projects to complements for
$\Coll(\omega,\bar{j}_{0,\gamma}(\eta))$.}
\]
Since $\eta\leq \bar{j}_{0,\gamma}(\eta)$ we get that 
\[
\bar{M}\models(\bar{j}_{0,\gamma}(T_B), \bar{j}_{0,\gamma}(S_B))\text{ projects to complements for 
$\Coll(\omega,\eta)$.}
\]

Therefore in 
$V[H]$
$s\in j_{0\gamma}(B)^{\bar{M}[H]}$ if and only if $s\in p[[\bar{j}_{0,\gamma}(T_B)]^{V[H]}]\cap M[H]$
if and only if $s\in p[[T_B]^{V[H]}]\cap \bar{M}[H]$ if and only if $s\in B^{V[H]}\cap \bar{M}[H]$.
\end{proof}

This shows that
\[
(\bar{M}[H],\sigma_{\UB^V}^{\bar{M}[H]})\sqsubseteq (V[H],\sigma_{\UB^V}^{V[H]}).
\]

Moreover $H_{\omega_1}^{\bar{M}[H]}$ and  $H_{\omega_1}^{V[H]}$
both realize the theory $T_{\UB^V}$ of $H_{\omega_1}^V$ in this language:
on the one hand 
\[
(H_{\omega_1}^V,\sigma_{\UB^V}^V)\prec (H_{\omega_1}^{\bar{M}},\sigma_{\UB^V}^{\bar{M}})
\prec (H_{\omega_1}^{\bar{M}[H]},\sigma_{\UB^V}^{\bar{M}[H]})
\]
(the leftmost $\prec$ holds since $j_{0,\gamma}:V\to \bar{M}$ is elementary, 
the rightmost $\prec$ holds since $\bar{M}$ models $\maxUB$);
on the other hand
\[
(H_{\omega_1}^V,\sigma_{\UB^V}^V)\prec (H_{\omega_1}^{V[H]},\sigma_{\UB^V}^{V[H]})
\]
(applying  $\maxUB$ in $V$).

Since $T_{\UB^V}$ is model complete, we get that
$H_{\omega_1}^{\bar{M}[H]}$ is an elementary $\sigma_{\UB^V}$-substructure of $H_{\omega_1}^{V[H]}$;
therefore $H_{\omega_1}^{\bar{M}[H]}$ models
\begin{quote}
\emph{There exists a $\tau_{\NS_{\omega_1},B,B_1,\dots,B_k}$-super-structure $\bar{N}$ of
$j_{0\gamma}(V_\delta)$
which is \\
$\bp{\bar{j}_{0\gamma}(B)^{\bar{M}[H]},\bar{j}_{0\gamma}(B_1)^{\bar{M}[H]},\dots,\bar{j}_{0\gamma}(B_k)^{\bar{M}[H]}}$-iterable and which realizes 
$\phi(j_{0\gamma}(a))$.}
\end{quote}

By homogeneity of $\Coll(\omega,\eta)$, in $\bar{M}$ we get that
any condition in $\Coll(\omega,\eta)$ forces:
\begin{quote}
\emph{There exists a $\tau_{\NS_{\omega_1},B,B_1,\dots,B_k}$-super-structure $\bar{N}$ of
$j_{0\gamma}(V_\delta)$
which is \\
$\bp{\bar{j}_{0\gamma}(B)^{\bar{M}[\dot{H}]},\bar{j}_{0\gamma}(B_1)^{\bar{M}[\dot{H}]},\dots,\bar{j}_{0\gamma}(B_k)^{\bar{M}[\dot{H}]}}$-iterable and which realizes 
$\phi(j_{0\gamma}(a))$.}
\end{quote}
By elementarity of $\bar{j}_{0\gamma}$ we get that in $V$ it holds that:
\begin{quote}
There exists an $\eta>\delta$ such that
any condition in $\Coll(\omega,\eta)$ forces:
\begin{quote}
\emph{``There exists a countable super structure $\bar{N}$ of
$V_\delta$ with respect to $\tau_{\NS_{\omega_1},\bp{B,B_1,\dots,B_k}}$
which is $\bp{B^{V[\dot{H}]},B_1^{V[\dot{H}]},\dots,B_k^{V[\dot{H}]}}$-iterable 
and which realizes $\phi(a)$''}
\end{quote}
\end{quote}

This procedure can be repeated for any $B\in\UB^V$, 
showing that $\phi(a)$ is honestly consistent in $V$.

By Schindler and Asper\'o characterization of $(*)$ we obtain that $\phi(a)$ holds in $H_{\omega_2}^V$.
\end{proof}

\item[(\ref{thm:char(*)-2})
implies (\ref{thm:char(*)-3})]
Our assumptions grants that the set 
\[
D_{\UB}=
\bp{M\in H_{\omega_1}^V: M\text{ is $\UB^{V}$-correct}}
\]
is coded by a universally Baire set $\bar{D}_\UB$ in $V$. 
Moreover we also get that whenever $G$ is $V$-generic for 
$\Coll(\omega,\delta)$,
the lift $\bar{D}_{\UB}^{V[G]}$ of $\bar{D}_\UB$ to $V[G]$ codes
\[
D_{\UB^{V[G]}}^{V[G]}=\bp{M\in H_{\omega_1}^{V[G]}: M\text{ is $\UB^{V[G]}$-correct}}.
\]

By (\ref{thm:char(*)-2}) we get that $V_\delta\in D_{\NS_{\omega_1},\UB^{V[G]}}^{V[G]}$.

By Fact \ref{fac:densityUBcorrect}
%
\[
(H_{\omega_1}^V,\tau_{\mathsf{ST}}^V,\UB^V)\models \text{ for all iterable $M$ 
there exists an $\UB$-correct structure 
$\bar{M}\geq M$}.
\]
Again since 
\[
(H_{\omega_1}^V,\tau_{\mathsf{ST}}^V,\UB^V)\prec (H_{\omega_1}^{V[G]},\tau_{\mathsf{ST}}^{V[G]},\UB^V),
\]
and the latter is first order expressible in the predicate $\bar{D}_\UB\in \UB^V$, we get that
\[
(H_{\omega_1}^{V[G]},\tau_{\mathsf{ST}}^{V[G]},\UB^V)\models \text{ for all iterable $M$ 
there exists an $\UB^{V[G]}$-correct structure 
$\bar{M}\geq M$}.
\]
So
let $N\leq V_\delta$ be in $V[G]$ an $\UB^{V[G]}$-correct structure with
$V_\delta\in H_{\omega_1}^N$.

Let $\mathcal{J}=\bp{j_{\alpha\beta}:\,\alpha\leq\beta\leq\gamma=\omega_1^N}\in H_{\omega_2}^N$ be an iteration witnessing $N\leq V_\delta$.

Now for any $A\in \pow{\omega_1}^V$ and $B\in\UB^V$
\[
(H_{\omega_2}^{N},\tau_{\mathsf{ST}}^{N},\NS_{\gamma}^{N},B^{V[G]}\cap N: B\in\UB^V)
\]
models

\begin{quote}
\emph{There exists
an $(\NS_{\omega_1},\UB^{V[G]})$-ec structure $M$ with $B^{V[G]}\cap N\in\UB_M$ and an iteration 
$\bar{\mathcal{J}}=\bp{\bar{j}_{\alpha\beta}:\,\alpha\leq\beta\leq\gamma}$ of $M$ such that
$\bar{j}_{0\gamma}(A)=j_{0\gamma}(A)$}.
\end{quote}
This statement is witnessed exactly by $V_\delta$ in the place of $M$ (since $B=B^{V[G]}\cap V_\delta\in\UB^V$ and 
$\UB^{V[G]}_{V_\delta}=\bp{B^{V[G]}:\, B\in\UB^V}$),
and $\mathcal{J}$ in the place of $\bar{\mathcal{J}}$.

Since $V_\delta$ is $(\NS_{\omega_1},\UB^{V[G]})$-ec in $V[G]$ we get that
$j_{0\gamma}\restriction H_{\omega_2}^V$ is $\Sigma_1$-elementary between $H_{\omega_2}^V$ and $H_{\omega_2}^N$
for $\tau_{\NS_{\omega_1},\UB^V}$.

Hence
\[
(H_{\omega_2}^{V},\tau_{\mathsf{ST}}^{V},\NS_{\gamma}^{V},\UB^V)
\]
models 
\begin{quote}
\emph{There exists 
an $(\NS_{\omega_1}^V,\UB^{V})$-ec structure $M$ with $B\in\UB_M$ and an iteration 
$\bar{\mathcal{J}}=\bp{\bar{j}_{\alpha\beta}:\,\alpha\leq\beta\leq(\omega_1)^V}$ of $M$ such that
$\bar{j}_{0\omega_1}(a)=A$ and 
$\NS_{\omega_1}^{\bar{j}_{0\omega_1}(M)}=\NS_{\omega_1}^V\cap \bar{j}_{0\omega_1}(M)$}.
\end{quote}

\item[(\ref{thm:char(*)-3})
implies (\ref{thm:char(*)-1})]
We use again Schindler and Asper\'o characterization of $(*)$.

Assume $\phi(A)$ is honestly consistent for some $\Sigma_1$-property $\phi(x)$ in the language 
$\sigma_{\UB,\NS_{\omega_1}}$
and $A\in\pow{\omega_1}^V$.
Let $B_1,\dots,B_k$ be the universally Baire predicates in $\UB$ mentioned in $\phi(x)$.

By (\ref{thm:char(*)-3}) there is in 
$V$ an $(\NS_{\omega_1},\UB)$-ec $M$ with $B_1,\dots,B_k\in \UB_M$ 
and $a\in \pow{\omega_1}^M$,
and an iteration $\mathcal{J}=\bp{j_{\alpha\beta}:\,\alpha\leq\beta\leq\omega_1}$ of $M$ such that
$j_{0\omega_1}(a)=A$ and $\NS_{\omega_1}^{j_{0\omega_1}(M)}=\NS_{\omega_1}^V\cap j_{0\omega_1}(M)$.

Let $G$ be $V$-generic for $\Coll(\omega,\delta)$. 
Find $N\in V[G]$ such that $N\models \phi(A)$, $N$ is 
$B_1^{V[G]},\dots,B_k^{V[G]}$-iterable in 
$V[G]$ and 
$\NS_{\omega_1}^{N}\cap V=\NS_{\omega_1}^V$ (this $N$ exists by the honest consistency of $\phi(x)$).

Notice that $\mathcal{J}\in V_\delta\subseteq N$ witnesses that $M\geq N$ as well.

Let $\bar{N}\leq N$ in $V[G]$ be a $\UB^{V[G]}$-correct structure with $B_{\UB_V}\in \UB_{\bar{N}}$
($\bar{N}$ exists by Fact \ref{fac:densityUBcorrect} applied in $V[G]$ to $N$ and $B_{\UB_V}$). 
Let $\mathcal{K}=\bp{k_{\alpha\beta}:\alpha\leq\beta\leq\bar{\gamma}=\omega_1^{\bar{N}}}\in\bar{N}$ be an
iteration witnessing that $\bar{N}\leq N$.

Remark that $H_{\omega_2}^{\bar{N}}\models \phi(k_{0\bar{\gamma}}(A))$, since $\Sigma_1$-properties are
upward absolute and $k_{0\bar{\gamma}}(N)$ is a 
$\tau_{\NS_{\omega_1}}\cup\bp{B_1,\dots,B_k}$-substructure of $H_{\omega_2}^{\bar{N}}$.

Also $\bp{B^{V[G]}:B\in\UB_V}\subseteq \UB_{\bar{N}}$ entail that
$B_{\UB_M}^{V[G]}\in \UB_{\bar{N}}$.

Letting 
\[
\bar{\mathcal{J}}=\bp{\bar{j}_{\alpha\beta}:\alpha\leq\beta\leq\bar{\gamma}}=k_{0\bar{\gamma}}(\mathcal{J}),
\]
we get that $\bar{j}_{0\bar{\gamma}}(a)=k_{0\gamma}(j_{0\bar{\gamma}}(a))=k_{0\gamma}(A)$, and
$\bar{\mathcal{J}}$ is such that $B_j^{V[G]}\in \UB_{\bar{N}}$ for all $j=1,\dots,k$ since
$B_{\UB_M}^{V[G]}$ in $\UB_{\bar{N}}$. 

Since $M$ is $(\NS_{\omega_1},\UB^{V[G]})$-ec in $V[G]$ by $\maxUB$, we get that
$\bar{j}_{0\bar{\gamma}}$ defines a $\Sigma_1$-elementary embedding of
\[
(H_{\omega_2}^M,\sigma_{\UB_M,\NS_{\omega_1}}^M)
\]
into 
\[
(H_{\omega_2}^{\bar{N}},\sigma_{\UB_M,\NS_{\omega_1}}^{\bar{N}}).
\]

%
%
Hence 
\[
(H_{\omega_2}^M,\sigma_{\UB_M,\NS_{\omega_1}}^M)\models\phi(a).
\] 

This gives that
\[
(H_{\omega_2}^{M_{\omega_1}},\sigma_{\UB_M,\NS_{\omega_1}}^{M_{\omega_1}})\models\phi(A)
\] 
(since $j_{0\omega_1}(a)=A$), 
and therefore that
\[
(H_{\omega_2}^V,\sigma_{\UB_M,\NS_{\omega_1}}^V)\models\phi(A),
\]
since $M_{\omega_1}$ is a substructure of $H_{\omega_2}^V$ for $\sigma_{\UB_M,\NS_{\omega_1}}$.

\end{description}
\end{proof}


\begin{question}
Is the use of $\maxUB$ really necessary? It is not at all clear whether the chain 
of equivalences for $(*)$-$\UB$ could be proved replacing it with the usual Woodin's axiom $(*)$ 
as formulated in
\cite[Def. 7.5]{HSTLARSON}; in all cases where the argument appealed to $\maxUB$ one should find a
different strategy to reach the desired conclusion. 
\end{question}

\bibliographystyle{plain}
	\bibliography{Biblio}

\end{document}